\newtheorem{theorem}{Theorem}[section]
\newtheorem{lemma}[theorem]{Lemma}
\newtheorem{corollary}[theorem]{Corollary}
\newtheorem{proposition}[theorem]{Proposition}
\theoremstyle{definition}
\newtheorem{definition}[theorem]{Definition}
\newtheorem{example}[theorem]{Example}
\theoremstyle{remark}
\newtheorem{remark}[theorem]{Remark}
\DeclareMathOperator{\Aut}{{\rm Aut}}
\DeclareMathOperator{\dfn}{:=}
\DeclareMathOperator{\depth}{{\sf depth}}
\DeclareMathOperator{\isoms}{\sim_{cc}}  
\newcommand{\con}[1]{\mathop{\sf con}({#1})}
\newcommand{\bcon}[1]{\mathop{\sf bco}({#1})}
\newcommand{\rbcon}[2]{\mathop{\sf rbco}({#1},{#2})}
\newcommand{\ret}[1]{\mathop{\sf ret}({#1})}
\newcommand{\nub}[1]{\mathop{\sf nub}({#1})}
\newcommand{\bcg}[1]{\textstyle \mathop{\sf con}_{\leftrightarrow}({#1})}
\newcommand{\open}[1]{{\mathscr #1}}
\def\ident{1}
\def\triv{\{\ident\}}
\def\NN{\mathbb N}
\def\ZZ{\mathbb Z}
\def\Cp{C_p} 
\def\endproof{\hfill$\square$}
\begin{document}
\bibliographystyle{plain}

\title[Contraction Groups]{The Nub of an Automorphism of a Totally Disconnected, Locally Compact Group}

\author[G. A. Willis]{George A. Willis}
\thanks{Research supported by ARC Discovery Grant DP0984342}
\address{Department of Mathematics \\ 
University of Newcastle\\ Callaghan, NSW 2308, Australia}
\email{george.willis@newcastle.edu.au}

\keywords{group, totally disconnected, automorphism, contraction subgroup, topologically transitive} 

\subjclass{Primary:  Secondary: } 
 
\begin{abstract}
To any automorphism, $\alpha$, of a  totally disconnected, locally compact group, $G$, there is associated a compact, $\alpha$-stable subgroup of $G$, here called the \emph{nub} of $\alpha$, on which the action of $\alpha$ is topologically transitive. Topologically transitive actions of automorphisms of compact groups have been studied extensively in topological dynamics and results obtained transfer, via the nub, to the study of automorphisms of general locally compact groups.

A new proof that the contraction group of $\alpha$ is dense in the nub is given, but it is seen that the two-sided contraction group need not be dense. It is also shown that each pair $(G,\alpha)$, with $G$ compact and $\alpha$ topologically transitive, is an inverse limit of pairs that have `finite depth' and that analogues of the Schreier Refinement and Jordan-H\"older Theorems hold for pairs with finite depth. 
\end{abstract}

\maketitle

\section{Introduction}
\label{sec:intro}
The automorphisms of a group reveal aspects of its structure, and the present paper extends previous work in \cite{BaumW_Cont,Wi:structure,wi:further,Wi:SimulTriang} on automorphisms of totally disconnected, locally compact groups. Associated with each automorphism, $\alpha$, of the totally disconnected, locally compact group, $G$, there is a compact, $\alpha$-stable subgroup of $G$ here called the \emph{nub} of $\alpha$. This paper has two aims: to answer questions from \cite{BaumW_Cont} and \cite{Wi:SimulTriang} about the relationship between the nub subgroup and other features of $\alpha$; and to investigate the structure of the nub. Results and examples from topological dynamics found in \cite{KSchmidt} transfer and assist in meeting these aims. 

Concerning the first aim, it is seen in \cite{BaumW_Cont} that the contraction group corresponding to $\alpha$ is closed if and only if the nub of $\alpha$ is trivial. Similarly, the second step in the procedure for finding a subgroup tidy for $\alpha$, described in \cite{Wi:structure,wi:further,Wi:SimulTriang}, is  automatic if the nub is trivial. Section~\ref{sec:nub} below describes the relationship between the nub of $\alpha$ and the tidying procedure more fully. Several equivalent characterizations of the nub subgroup are also given in that section. 

A question left unresolved in \cite{BaumW_Cont} and \cite{Wi:SimulTriang} was whether the `two-sided contraction group', that is, $\left\{ x\in G \mid \alpha^n(x)\to\ident_G \hbox{ as } |n|\to\infty\right\}$ is dense in the nub of $\alpha$. A positive answer to that question would simplify matters in those papers and in Section~\ref{sec:nub} of this paper. In an attempt to answer the question, the author investigated the structure of the nub of $\alpha$ in more detail and the results of that investigation are reported in Sections~\ref{sec:contraction} and~\ref{sec:Jordan-Holder}. One of the outcomes is a new proof that the (one-sided) contraction group is dense in the nub: where the original proof in \cite{BaumW_Cont} relies on a topological argument, the proof in Section~\ref{sec:contraction} places more reliance on the algebraic structure. Another outcome may be seen in Theorems~\ref{thm:Schreier} and~\ref{thm:Jordan-Holder}, which are a type of Schreier Refinement and Jordan-H\"older Theorem for pairs $(G,\alpha)$, where $G$ is a compact, totally disconnected group and $\alpha\in\Aut(G)$. Unfortunately (perhaps), it turned out that the `two-sided' contraction group is not always dense in the nub subgroup, as Example~\ref{ex:con_2-ways_not_dense} shows.

The short Section~\ref{sec:alphastable} applies the structure theorems for the nub of $\alpha$ obtained in earlier sections to produce an abstract version of a technical result of J.~Tits concerning automorphism groups of trees.

As pointed out to me by Klaus Schmidt, much of the work in this paper, including Example~\ref{ex:con_2-ways_not_dense}, reproduces results that are already well-known in the topological dynamics literature, see the book \cite{KSchmidt} for results known prior to 1995 and \cite{Fagnani,KitchSchmidt2,Lind&Schmidt} for  more recent work. The main contributions made here appear to be: the link made with the structure theory of non-compact, totally disconnected, locally compact groups; sharper and more algebraic formulations of some results, such as Theorems~\ref{thm:Schreier} and~\ref{thm:Jordan-Holder} vis-\`a-vis \cite[Proposition~10.2]{KSchmidt}; and a different perspective and new methods of proof. Where possible, terms from the dynamical systems literature have been adopted. For example, the `two-sided contraction group' is called the \emph{homoclinic subgroup}. However, there is one important difference. Automorphisms that are called `expansive' in the dynamical systems literature are here called `finite depth': the term `expansive' conveys a misleading intuition from the point of view of the group structure theory while, on the other hand, `finite depth' suggests the analogy that motivates Theorems~\ref{thm:Schreier} and~\ref{thm:Jordan-Holder}. 

The next two Sections~\ref{sec:keyconcepts} and~\ref{sec:Associates} review the main concepts from the structure theory of totally disconnected, locally compact groups and from topological dynamics that will be needed in the rest of the paper.   

\section{Automorphisms in Structure Theory and Ergodic Theory}
\label{sec:keyconcepts}

The results in this paper are at the interface between the structure theory of totally disconnected, locally compact groups and the theory of ergodic actions by group automorphisms, and relevant concepts from these theories are recalled here. Key examples that may be used to illustrate ideas in both theories are defined first.
 
\subsection{Product groups and shifts}
\label{sec:examples}
For each finite group $F$,  
$$
F^{\mathbb{Z}} \dfn \left\{ f : \mathbb{Z} \to F \right\}
$$ 
is the \emph{direct product} over $\mathbb{Z}$. Equipped with the coordinatewise product and product topology, $F^{\mathbb{Z}}$ is a compact, totally disconnected group. The following particular elements of $\mathbb{Z}$ will be referred to frequently. \\
\emph{Point support:} For $f\in F$ and $n\in \mathbb{Z}$, denote by $[f]_{n}$ the element of $F^{\mathbb{Z}}$ such that
$$
[f]_{n}(k) = \begin{cases}
f & \hbox{ if } k=n\\
\ident_F & \hbox{ if } k\ne n
\end{cases}.
$$
\emph{Constants:} The constant function on $\mathbb{Z}$ with value $f$ will be denoted by $\underline{f}$ and the subgroup of constant functions, which is isomorphic to $F$, will be denoted by $\underline{F}$. 
\\
The \emph{shift automorphism}, $\sigma$, of $F^{\mathbb{Z}}$ is defined by
$$
\sigma(f)(n) = f(n+1)
$$
and the pair $(F^{\mathbb{Z}},\sigma)$ will be called the \emph{shift}.

In many examples, $F$ is the group $\left\{ \bar{i} \mid i\in\{0,1,\dots, n-1\}\right\}$ of integers under addition modulo $n$. This group is isomorphic to the cyclic group of order $n$ and will be denoted $C_n$. The ring structure on the integers modulo $n$ will be used on occasion, and $C_n$ will then denote this ring (which is a field when $n$ is prime). 

Two $\sigma$-stable subgroups of $F^{\mathbb{Z}}$ will be referred to frequently.  \\
The \emph{direct sum} $\bigoplus_{\mathbb{Z}} F$ will be identified with the subgroup of $F^{\mathbb{Z}}$ consisting of functions with finite support and denoted by $F^{[\mathbb{Z}]}$. Then $F^{[\mathbb{Z}]}$ is generated by the elements $f_{[n]}$ ($f\in F$, $n\in\mathbb{Z}$) and is a dense subgroup of $F^{\mathbb{Z}}$. \\
The \emph{restricted sum} $(F^{\mathbb{Z}^-})\oplus (\bigoplus_{\mathbb{Z}\cup\{0\}} F)$ 
will be identified with the subgroup of $F^{\mathbb{Z}}$ consisting of functions whose support is contained in $(-\infty,n]$ for some $n\in\mathbb{Z}$ and denoted by $F^{\mathbb{Z}<}$. Then $F^{\mathbb{Z}<}$ is a dense subgroup of $F^{\mathbb{Z}}$. It is a non-compact, totally disconnected, locally compact group when equipped with the topology in which all subgroups $F^{(-\infty,n]}$ are compact and open. 

It will often be convenient to consider pairs $(G,\alpha)$, where $G$ is a compact group and $\alpha\in\Aut(G)$, as objects in a category in which morphisms, $\phi: (G,\alpha)\to (H,\beta)$, are  group homomorphisms, $\phi: G\to H$, that intertwine $\alpha$ and $\beta$. It is shown in later sections how general objects in this category may be described in terms of shifts $(F^{\mathbb{Z}},\sigma)$.

\subsection{Totally disconnected, locally compact groups and the scale}
\label{sec:minimizing,scale}
Each totally disconnected, locally compact group has a base of neighbourhoods of the identity consisting of compact, open subgroups, see \cite{vDant,HewRoss}. Such subgroups will be denoted $V$, $W$, \dots\ and 
$$
{\mathcal B}(G) := \left\{ V \mid V\leq G \hbox{ is compact and open}\right\}.
$$
Since an open subgroup of a compact group has finite index, any two subgroups in ${\mathcal B}(G)$ are \emph{commensurable}, that is, their intersection has finite index in both. This allows a positive integer to be associated with each automorphism of $G$, as follows. 
\begin{definition}
\label{defn:scale&minim}
The \emph{scale} of the automorphism $\alpha$ is the positive integer
$$
s(\alpha) := \min\left\{ [\alpha(V) : \alpha(V)\cap V] \mid V\in {\mathcal B}(G)\right\}.
$$ 
A subgroup, $V$, at which the minimum is attained is \emph{minimizing} for~$\alpha$. 
\end{definition}
\noindent When $G$ has a compact, open, normal subgroup all inner automorphisms have scale~1. The scale can be a useful tool for investigating general totally disconnected, locally compact groups that do not have such a subgroup, see \cite{RoWi:conclcg,Pre+Wu:dense, Wi:tdHM}.

Applications of the concepts of scale and minimizing subgroup, in \cite{DaShWi:Lie,RoWi:conclcg} for instance, depend on the following structural characterization of minimizing subgroups, which is proved in \cite{Wi:structure,wi:further}.

\begin{theorem}[The Structure of Minimizing Subgroups]
\label{thm:tidiness}
Let $\alpha$ be an automorphism of $G$. For each $V\in {\mathcal B}(G)$ put
$$
V_+ = \bigcap_{k\geq0} \alpha^k(V) \ \hbox{ and }\ V_- = \bigcap_{k\geq0} \alpha^{-k}(V).
$$
Then $V$ is minimizing for $\alpha$ if and only if
\begin{description}
\item[TA($\alpha$)] $V = V_+V_-$ and 
\item[TB($\alpha$)] $V_{++} := \bigcup_{k\geq0} \alpha^k(V_+)$ and $V_{--} := \bigcup_{k\geq0} \alpha^{-k}(V_-)$ are closed.
\end{description}
If $V$ is minimizing for $\alpha$, then $s(\alpha) = [\alpha(V_+):V_+]$. 
\endproof
\end{theorem}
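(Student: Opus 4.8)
The plan is to study the integer-valued function $\mu(W):=[\alpha(W):\alpha(W)\cap W]$ on $\mathcal B(G)$, whose minimum is by definition $s(\alpha)$, and to show that its minimizers are precisely the subgroups satisfying TA$(\alpha)$ and TB$(\alpha)$. Two elementary facts are used throughout. First, $\alpha(V_+)=\bigcap_{k\ge1}\alpha^k(V)\supseteq V_+$ and dually $\alpha(V_-)\subseteq V_-$, so $V_{++}$ and $V_{--}$ are the ascending unions of the chains $V_+\le\alpha(V_+)\le\cdots$ and $V_-\le\alpha^{-1}(V_-)\le\cdots$; since $V_+=V\cap\alpha(V_+)$ is open in $\alpha(V_+)$, every step of the first chain has the same finite index $[\alpha(V_+):V_+]$, whence $[\alpha^n(V_+):V_+]=[\alpha(V_+):V_+]^n$. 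Second, I would record a comparison lemma: for commensurable $W,W'\in\mathcal B(G)$ the numbers $a_n(W):=[\alpha^n(W):\alpha^n(W)\cap W]$ satisfy $a_n(W)/a_n(W')\le C$ for all $n$, with $C$ depending only on $[W:W\cap W']$ and $[W':W\cap W']$; this is proved by comparing each of $a_n(W),a_n(W')$ with $a_n(W\cap W')$, each comparison costing only a bounded factor because $\alpha^n$ preserves the indices $[W:W\cap W']$ and $[W':W\cap W']$.

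For the implication that TA$(\alpha)$ and TB$(\alpha)$ together imply that $V$ is minimizing, and for the scale formula, I would first compute $\mu(V)$ for tidy $V$. Applying $\alpha^n$ to $V=V_+V_-$ gives $\alpha^n(V)=\alpha^n(V_+)\alpha^n(V_-)$, and the containments $V_+\subseteq\alpha^n(V_+)$ and $\alpha^n(V_-)\subseteq V_-$ show immediately that $V_+\,\alpha^n(V_-)\subseteq\alpha^n(V)\cap V$. The reverse inclusion is where TB$(\alpha)$ does its work: writing an $x\in\alpha^n(V)\cap V$ in both factorizations and comparing yields an element of $V_{++}\cap V_{--}$, and closedness of $V_{++}$ and $V_{--}$ is exactly what allows this element to be absorbed, giving $\alpha^n(V)\cap V=V_+\,\alpha^n(V_-)$ and, by the same token, the cancellation $[\alpha^n(V_+)\alpha^n(V_-):V_+\,\alpha^n(V_-)]=[\alpha^n(V_+):V_+]$. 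Hence $a_n(V)=[\alpha(V_+):V_+]^n$, and in particular $\mu(V)=[\alpha(V_+):V_+]$. If now $V,V'$ are both tidy they are commensurable members of $\mathcal B(G)$, so the comparison lemma bounds $(\mu(V)/\mu(V'))^n=a_n(V)/a_n(V')$ uniformly in $n$, forcing $\mu(V)=\mu(V')$: all tidy subgroups share one value of $\mu$.

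For the converse I would argue contrapositively, via the \emph{tidying procedure}, that a subgroup failing TA$(\alpha)$ or TB$(\alpha)$ is not a minimizer. \emph{Tidying above} produces from any $W$ an explicit finite intersection of translates $\alpha^j(W)$ that satisfies TA$(\alpha)$ and has $\mu$ no larger than $\mu(W)$, the termination resting on an integer-valued monotonicity argument. \emph{Tidying below} then modifies a subgroup satisfying TA$(\alpha)$ but not TB$(\alpha)$ --- enlarging the relevant factor by the part of $\overline{V_{--}}$ (resp.\ $\overline{V_{++}}$) that was missing --- so as to restore TB$(\alpha)$, preserve TA$(\alpha)$, and not increase $\mu$. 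The essential point is that each step strictly decreases $\mu$ whenever it genuinely alters the subgroup, and a non-tidy subgroup must be genuinely altered; hence no minimizer can be non-tidy. Assembling the pieces: $\mu$ takes positive integer values, so a minimizer exists; tidying a minimizer produces a tidy subgroup of equal, hence minimal, value, so $s(\alpha)$ is attained at a tidy subgroup; by the previous paragraph every tidy subgroup attains this same value and is therefore minimizing, while the strictness just noted shows conversely that every minimizer is tidy. This proves the equivalence and identifies $s(\alpha)=[\alpha(V_+):V_+]$ for minimizing $V$.

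The main obstacle, in both directions, is the control of the ``two-sided'' part $V_{++}\cap V_{--}$. In the sufficiency argument it is the element of this intersection that must be absorbed for the $V_+V_-$ factorization to be compatible with $\alpha$; in the necessity argument the tidying-below step is delicate precisely because one must enlarge a factor enough to close up $V_{++}$ and $V_{--}$ while keeping TA$(\alpha)$ intact and the index non-increasing, and must show the decrease is strict exactly when TB$(\alpha)$ fails. By comparison the comparison lemma, the termination of tidying above, and the telescoping identity $[\alpha^n(V_+):V_+]=[\alpha(V_+):V_+]^n$ are routine.
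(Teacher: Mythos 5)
This theorem is stated in the paper without proof: it is recalled from \cite{Wi:structure} and \cite{wi:further}, with only the underlying tidying procedure sketched in Section~\ref{sec:nub} and Remark~\ref{rem:TBfirst}. Your plan follows essentially the same route as those references: the telescoping identity $[\alpha^n(V):\alpha^n(V)\cap V]=[\alpha(V_+):V_+]^n$ for tidy $V$, the commensurability comparison (a bounded ratio of geometric sequences forces equal ratios) to show all tidy subgroups share one index, and the tidying procedure with a strict-decrease argument for the converse; the comparison lemma and the computation of $\alpha^n(V)\cap V=V_+\alpha^n(V_-)$ via an element of $V_{++}\cap V_{--}$ are both correct as described. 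One caution: your intermediate assertion that each tidying step strictly decreases $\mu$ \emph{whenever it genuinely alters the subgroup} is false --- for instance, replacing an already tidy $V$ by $V\cap\alpha(V)=V_+\alpha(V_-)$ changes the subgroup but leaves $[\alpha(V_+):V_+]$, and hence $\mu$, unchanged --- so "genuinely altered" cannot serve as the trigger for strictness. The correct statement, which you do reach in your final paragraph, is that the decrease is strict exactly when $\text{\bf TA}(\alpha)$ or $\text{\bf TB}(\alpha)$ fails; proving this, in particular that the tidying-below step preserves $\text{\bf TA}(\alpha)$, restores $\text{\bf TB}(\alpha)$, and strictly lowers the index when $\text{\bf TB}(\alpha)$ fails, is where essentially all of the difficulty of the theorem is concentrated, and it is asserted rather than argued in your proposal.
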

\noindent A subgroup satisfying $\text{\bf TA}(\alpha)$ is said to be \emph{tidy above} for $\alpha$ and one satisfying $\text{\bf TB}(\alpha)$ is \emph{tidy below}. A subgroup satisfying $\text{\bf TA}(\alpha)$ and $\text{\bf TB}(\alpha)$ is \emph{tidy} for $\alpha$ so that the theorem may be restated as \emph{$V$ is minimizing for $\alpha$ if and only if it is tidy for $\alpha$}. The proof of Theorem~\ref{thm:tidiness} uses a certain procedure for finding subgroups tidy for $\alpha$ that is discussed further in Remark \ref{rem:TBfirst}.

To illustrate these ideas, the only subgroup of $F^{\mathbb{Z}}$ that is minimizing for $\sigma$ is the whole group and $s(\sigma) = 1$. On the other hand, all compact, open subgroups $F^{(-\infty,n]}$ of $F^{\mathbb{Z}<}$ are minimizing for the shift automorphism $\sigma$. Putting $V = F^{(-\infty,n]}$, we have $V_+=\triv$, $V_- = V$, $s(\sigma) = 1$ and $s(\sigma^{-1}) = |F|$. 

The following result is \cite[Lemma~1]{Wi:structure}. It is a key step in the proof of Theorem~\ref{thm:tidiness} above and will be applied in later sections to automorphisms of compact groups. \begin{lemma}
\label{lem:recalltidy}
Let $G$ be a totally disconnected, locally compact group and let $\alpha$ be an automorphism. Suppose that $V$ is a compact, open subgroup of $G$. Then:
\begin{enumerate}
\item there is $n\in \mathbb{N}$ such that $\bigcap_{k=0}^n \alpha^k(V)$ is tidy above for $\alpha$, and\label{lem:recalltidyi}
\item $V_+$ has finite index in $\alpha(V_+)$. \label{lem:recalltidyii}
\end{enumerate}
\endproof
\end{lemma}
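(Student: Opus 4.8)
The plan is to dispose of part (ii) immediately and to spend all the effort on part (i), where the key is to make the index $[V_n : V_{n+1}]$ of the truncated intersections stabilise and then to read off the tidy-above decomposition from that stabilisation. For part (ii), observe that $V_+=\bigcap_{k\geq0}\alpha^k(V)$ is a closed subgroup of the compact group $V$, hence compact, so $\alpha(V_+)$ is compact; since $\alpha(V_+)=\bigcap_{k\geq1}\alpha^k(V)$ we have $\alpha(V_+)\cap V=V_+$, and as $V$ is open this exhibits $V_+$ as an open, hence finite-index, subgroup of the compact group $\alpha(V_+)$. That is all of (ii).

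For part (i), write $V_n:=\bigcap_{k=0}^{n}\alpha^k(V)$, a decreasing sequence of compact open subgroups with $\bigcap_n V_n=V_+$. First I would compute, directly from the definitions, that $(V_n)_+=V_+$ and $(V_n)_-=\bigcap_{k\leq n}\alpha^k(V)=V_n\cap V_-$; thus ``$V_n$ is tidy above'' unwinds to the single assertion $V_n=V_+\,(V_n\cap V_-)$. Next I would show the index $c_n:=[V_n:V_{n+1}]$ is finite and non-increasing: from $V_{n+2}=V_{n+1}\cap\alpha(V_{n+1})$ together with the inclusion $V_{n+1}\leq\alpha(V_n)$, a coset count gives $c_{n+1}=[V_{n+1}:V_{n+1}\cap\alpha(V_{n+1})]\leq[\alpha(V_n):\alpha(V_{n+1})]=c_n$. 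Being a non-increasing sequence of positive integers, $c_n$ is eventually constant, say $c_n=c$ for $n\geq N$.

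The crux, and the step I expect to be the main obstacle, is to identify this limiting value as $c=[V_-:\alpha(V_-)]$. Since $\alpha$ preserves indices, $c_n=[\alpha^{-n}(V_n):\alpha^{-n}(V_{n+1})]$, where $\alpha^{-n}(V_n)=\bigcap_{k=-n}^{0}\alpha^k(V)\downarrow V_-$ and $\alpha^{-n}(V_{n+1})=\alpha^{-n}(V_n)\cap\alpha(V)\downarrow V_-\cap\alpha(V)=\alpha(V_-)$. So the whole question reduces to the following continuity property: if a decreasing sequence of compact groups $A_m\downarrow A$ is intersected with a fixed open subgroup $K$, then $[A_m:A_m\cap K]=[A:A\cap K]$ for all large $m$. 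This I would prove by noting that $[A_m:A_m\cap K]$ counts the image of $A_m$ in the discrete coset space $G/K$; these images form a decreasing sequence of finite sets, and a coset $gK$ disjoint from $A=\bigcap_m A_m$ is disjoint from $A_m$ for large $m$ by the finite-intersection property of the nested compact sets $gK\cap A_m$. Hence the images stabilise to the image of $A$, giving $c=[V_-:\alpha(V_-)]$.

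With this in hand the decomposition follows. For $m\geq N$, compare $[V_m:V_{m+1}]=c$ with $[V_m\cap V_-:V_{m+1}\cap V_-]=[\alpha^m(V_-):\alpha^{m+1}(V_-)]=[V_-:\alpha(V_-)]=c$; since the natural map $(V_m\cap V_-)/(V_{m+1}\cap V_-)\to V_m/V_{m+1}$ is injective between sets of the same finite cardinality, it is onto, which says precisely that $V_{m+1}(V_m\cap V_-)=V_m$. A short induction on $m$ then gives $V_n=V_m\,(V_n\cap V_-)$ for all $m\geq n\geq N$, and letting $m\to\infty$ (writing $x=g_m h_m$ with $g_m\in V_m$, $h_m\in V_n\cap V_-$, passing to convergent subnets, and using $\bigcap_m V_m=V_+$ with all groups compact) yields $V_n=V_+\,(V_n\cap V_-)=(V_n)_+(V_n)_-$. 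Thus $V_n$ is tidy above for every $n\geq N$, proving (i).
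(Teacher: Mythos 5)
Lemma~\ref{lem:recalltidy} is stated in the paper with no proof: it is recalled verbatim from \cite[Lemma~1]{Wi:structure}, so there is no in-paper argument to compare yours against line by line. Your proposal is correct. Part (ii) is exactly the standard observation ($\alpha(V_+)\cap V=V_+$ exhibits $V_+$ as an open, hence finite-index, subgroup of the compact group $\alpha(V_+)$). For part (i), your opening moves coincide with the classical proof: the computation $(V_n)_+=V_+$, $(V_n)_-=V_n\cap V_-$, and the monotonicity argument $c_{n+1}=[V_{n+1}:V_{n+1}\cap\alpha(V_{n+1})]\le[\alpha(V_n):\alpha(V_{n+1})]=c_n$ forcing the indices to stabilise are precisely how \cite{Wi:structure} begins. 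Where you genuinely diverge is in how stabilisation is converted into the factorisation $V_n=V_+(V_n\cap V_-)$: you identify the limiting index as $[V_-:\alpha(V_-)]$ via the coset-image/finite-intersection argument for $A_m\downarrow A$ against a fixed open $K$, then match it against $[V_m\cap V_-:V_{m+1}\cap V_-]=[\alpha^m(V_-):\alpha^{m+1}(V_-)]$ to get $V_m=(V_m\cap V_-)V_{m+1}$, telescope, and pass to the limit by compactness. I checked each of these steps (in particular $V_m\cap V_-=\alpha^m(V_-)$, the injectivity of the coset map $(V_m\cap V_-)/(V_{m+1}\cap V_-)\to V_m/V_{m+1}$, and the subnet argument giving $V_n\subseteq V_+(V_n\cap V_-)$) and they are all sound; note that the final limiting step does require nets rather than sequences in the non-metrizable case, which you correctly flag. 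The payoff of your route is that it is self-contained and yields the extra quantitative fact that the stable index equals $[V_-:\alpha(V_-)]=[\alpha(V_+):V_+]$ (modulo unimodularity of compact quotients), essentially recovering the last sentence of Theorem~\ref{thm:tidiness} for free; the original route in \cite{Wi:structure,wi:further} reaches the same factorisation by a more direct coset-lifting induction without computing the limit of $c_n$.
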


The following useful criteria for an orbit to be contained in the tidy subgroup $V$ may be found in \cite[Lemma~3.2]{wi:further}.
\begin{lemma}
\label{lem:criterion}
Let $V$ be tidy for $\alpha$ and $x\in V$. 
\begin{enumerate}
\item If $x\in V_+$ and $\{\alpha^n(x)\}_{n\in{\mathbb{N}}}$ is bounded, then $\{\alpha^n(x)\}_{n\in{\mathbb{Z}}}\leq V$.\label{lem:criterioni}
\item If $\{\alpha^n(x)\}_{n\in{\mathbb{Z}}}$ is bounded, then $\{\alpha^n(x)\}_{n\in{\mathbb{Z}}}\leq V$. \label{lem:criterionii}
\end{enumerate}
\endproof
\end{lemma}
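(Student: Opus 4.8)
The plan is to prove part~\ref{lem:criterionii} as a consequence of part~\ref{lem:criterioni} together with its mirror image (part~\ref{lem:criterioni} applied to $\alpha^{-1}$, for which $V$ is again tidy with the roles of $V_+$ and $V_-$ interchanged), and to reduce part~\ref{lem:criterioni} itself to a statement about a ``level filtration'' of $V_{++}$. Here I interpret ``bounded'' as relatively compact and read the conclusion $\{\alpha^n(x)\}_{n\in\mathbb{Z}}\leq V$ as asserting that every $\alpha^n(x)$ lies in $V$; since $V$ is a group this is equivalent to the generated subgroup lying in $V$.

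For part~\ref{lem:criterioni}, I would first dispose of the backward orbit: because $V_+\subseteq\alpha(V_+)$ we have $\alpha^{-1}(V_+)\subseteq V_+$, so $\alpha^{-n}(x)\in V_+\subseteq V$ for all $n\geq0$, and it remains only to control the forward orbit, which lies in $V_{++}=\bigcup_{k\geq0}\alpha^k(V_+)$. The central step is to show that $V_+$ is open in $V_{++}$. This is where $\text{\bf TB}(\alpha)$ is used, and I expect it to be the main obstacle: by tidiness $V_{++}$ is closed, hence a locally compact (Hausdorff) group, and it is $\sigma$-compact because it is the countable increasing union of the compact sets $\alpha^k(V_+)$; by the Baire category theorem one of these closed subgroups must have nonempty interior in $V_{++}$, a subgroup with nonempty interior is open, and applying the homeomorphism $\alpha^{-k}$ shows $V_+$ is open in $V_{++}$. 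Consequently each $\alpha^k(V_+)$ is open in $V_{++}$, so the $\alpha^k(V_+)$ form an increasing open cover of the compact closure of the forward orbit; by compactness that closure is contained in a single $\alpha^M(V_+)$.

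To finish part~\ref{lem:criterioni} I would introduce the level $\ell(y)=\min\{k\geq0 : y\in\alpha^k(V_+)\}$ for $y\in V_{++}$, which is finite by definition of $V_{++}$. Since $\alpha(\alpha^k(V_+))=\alpha^{k+1}(V_+)$, one checks that $\ell(\alpha(y))=\ell(y)+1$ whenever $\ell(y)\geq1$; hence if the forward orbit ever attained a level $\geq1$ its levels would increase without bound, contradicting $\alpha^n(x)\in\alpha^M(V_+)$ for all $n\geq0$. Starting from $x\in V_+$ (level $0$) the level can only first pass to $1$, which is now excluded, so $\ell(\alpha^n(x))=0$ for all $n\geq0$; that is, the forward orbit lies in $V_+$. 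Combined with the backward orbit this gives the (slightly stronger) conclusion $\{\alpha^n(x)\}_{n\in\mathbb{Z}}\leq V_+\leq V$. I would invoke Lemma~\ref{lem:recalltidy}\ref{lem:recalltidyii} only if needed to supplement the filtration bookkeeping.

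Finally, for part~\ref{lem:criterionii} I would use $\text{\bf TA}(\alpha)$ to write $x=x_+x_-$ with $x_+\in V_+$ and $x_-\in V_-$. Since $\alpha(V_-)\subseteq V_-$, the set $\{\alpha^n(x_-)\}_{n\geq0}$ lies in the compact group $V_-$, so from $\alpha^n(x_+)=\alpha^n(x)\,\alpha^n(x_-)^{-1}$ and the hypothesis that the full orbit of $x$ is bounded, the forward orbit of $x_+$ is bounded; part~\ref{lem:criterioni} then places the whole orbit of $x_+$ in $V_+$. Symmetrically, $\{\alpha^{-n}(x_+)\}_{n\geq0}\subseteq V_+$ shows that the backward orbit of $x_-$ is bounded, and the mirror of part~\ref{lem:criterioni} places the whole orbit of $x_-$ in $V_-$. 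Recombining, $\alpha^n(x)=\alpha^n(x_+)\alpha^n(x_-)\in V_+V_-=V$ for every $n\in\mathbb{Z}$, as required.
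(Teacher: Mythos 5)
Your proof is correct. The paper itself does not prove this lemma --- it simply quotes it from \cite[Lemma~3.2]{wi:further} --- but your argument is sound and is essentially the standard one: condition $\text{\bf TB}(\alpha)$ makes $V_{++}$ a closed, hence locally compact and $\sigma$-compact, group, Baire category then shows $V_+$ is open in $V_{++}$, and compactness of the orbit closure traps the forward orbit inside a single $\alpha^M(V_+)$; your ``level'' bookkeeping (or, equivalently, intersecting $x\in\alpha^{M-n}(V_+)$ over all $n\geq 0$ to land in $V_0=\bigcap_{n\in\mathbb{Z}}\alpha^n(V)$) correctly forces the forward orbit down into $V_+$, and the reduction of (ii) to (i) and its mirror via the factorization $x=x_+x_-$ from $\text{\bf TA}(\alpha)$ is exactly right. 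The one point worth making explicit, which you use implicitly, is that a subgroup tidy for $\alpha$ is also tidy for $\alpha^{-1}$ with $V_+$ and $V_-$ interchanged (for $\text{\bf TA}$ this needs $V=V_+V_-\Rightarrow V=V_-V_+$, which follows by taking inverses); with that noted, the proof is complete.
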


\subsection{Ergodic, or topologically transitive, actions by automorphisms}
The action of a Borel isomorphism, $T$, on a measure space, $(X,\mu)$, is \emph{ergodic} if every $T$-stable Borel subset of $X$ has either zero or full measure. Beginning with the conjecture of Paul Halmos \cite{Halmos} that a locally compact group having an automorphism that acts ergodically for the Haar measure must be compact, there has been much interest in ergodic theory in the action of single automorphisms on locally compact groups. Halmos' conjecture was proved in \cite{KaufRaj} in the case of connected groups and in~\cite{Aoki} for totally disconnected groups. A short proof of the totally disconnected case that uses the scale of the automorphism and associated ideas is given in~\cite{Pre+Wu:dense}. An extension of Halmos' conjecture to finitely generated, abelian group of automorphisms acting ergodically is proved in \cite{DaShWi:Lie}.

With the proof of Halmos' conjecture, interest has shifted to ergodic actions of automorphisms of compact groups. The monograph \cite{KSchmidt} gives a full account of progress up to 1995. Among the numerous publications that have appeared since are \cite{Fagnani,Lind&Schmidt,KitchSchmidt2}.

The action of the shift $\sigma$ on the product $F^{\mathbb{Z}}$ is ergodic. The restriction of $\sigma$ to a closed, $\sigma$-stable subgroup, $G$, of $F^{\mathbb{Z}}$ gives rise to a \emph{subshift of finite type}, see \cite{}. Every ergodic pair $(G,\alpha)$, with $G$ compact and totally disconnected, is an inverse limit of subshifts of finite type, see \cite[Proposition~2.17]{KSchmidt} and also Proposition~\ref{prop:Proj_Limit} below. This key technique in the ergodic theoretic approach that seems to play the corresponding role to that of Lemma~\ref{lem:recalltidy} above in the algebraic approach. 

Ergodicity of the pair $(G,\alpha)$ is equivalent to the condition of being \emph{topologically transitive}, that is, there is $x\in G$ such that the orbit $\left\{ \alpha^n(x) \mid n\in\mathbb{Z}\right\}$ is dense, see \cite[Theorem~1.1]{KSchmidt}. Since the emphasis in this paper is on the algebraic and topological properties of automorphisms, the latter condition will be used here. 

\section{Subgroups Associated with an Automorphism}
\label{sec:Associates}

Several subgroups of $G$ associated with the automorphism $\alpha$ have been studied in the literature. These subgroups and the relationships between them are recalled in this section and  a consistent notation for them is introduced. 
\begin{definition}
\label{defn:contraction}
Let $\alpha$ be an automorphism of the totally disconnected, locally compact group $G$. 
\begin{enumerate}
\item The \emph{contraction group} for $\alpha$ is \label{defn:contraction1}
\begin{multline*}
\phantom{AAAAAAAA}\con{\alpha} := \left\{ x\in G \mid \alpha^{n}(x) \to \ident_G \hbox{ as }n\to\infty \right\}.\hfill
\end{multline*} 
\item The \emph{retraction group} for $\alpha$ is
\begin{multline*}
\phantom{AAAAAAAA}\ret{\alpha} := \bigcap\left\{ V_{--} \mid V\in {\mathcal B}(G)\hbox{ is tidy for }\alpha\right\}.\hfill
\end{multline*}
\item The \emph{bounded contraction group} for $\alpha$ is 
\begin{multline*}
\phantom{AAAAAAAA}\bcon{\alpha} := \left\{ x\in \con{\alpha}\mid \{\alpha^n(x)\}_{n\leq0} \text{ has compact closure}\right\}.\hfill
\end{multline*} 
\item The \emph{two-sided contraction}, or \emph{homoclinic group} for $\alpha$ is \label{defn:contraction4}
\begin{multline*}
\phantom{AAAAAAAA}\bcg{\alpha} :=  \con{\alpha}\cap \con{\alpha^{-1}}.\hfill
\end{multline*} 
\item The \emph{nub group} for $\alpha$ is \label{defn:contraction5}
\begin{multline*}
\phantom{AAAAAAAA}\nub{\alpha} := \bigcap\left\{ V \mid V\in {\mathcal B}(G)\hbox{ is tidy for }\alpha\right\}.\hfill
\end{multline*}
\end{enumerate}
\end{definition}
The term homoclinic subgroup in (\ref{defn:contraction4}) comes from the ergodic theory literature, see \cite{Lind&Schmidt}, and will be adopted in this paper. 

The above definitions may be illustrated by considering the shift automorphism on $G= F^{\mathbb{Z}}$, as defined in Subsection~\ref{sec:examples}. In this case: $\con{\sigma} = F^{\mathbb{Z}<} = \bcon{\sigma}$; $\ret{\sigma} = F^{\mathbb{Z}} = \nub{\sigma}$; and $\bcg{\sigma} = F^{[{\mathbb{Z}}]}$. Consider also the shift automorphism on $G= F^{\mathbb{Z}<}$. In this case: $\con{\sigma} = F^{\mathbb{Z}<} = \ret{\sigma}$; and $\bcon{\sigma} = \bcg{\sigma} =  \nub{\alpha} = \triv$. Note that, in both of these examples, $\con{\sigma}$ is dense in $\ret{\sigma}$ and $\bcg{\sigma}$ is dense in $\nub{\sigma}$. It will be seen that the first of these statements holds for every automorphism but that the second does not. 

Several of these groups are clearly equal when $G$ is compact. 
\begin{proposition}
\label{prop:ret=nub}
Let $G$ be a compact, totally disconnected group and $\alpha$ be an automorphism of $G$. Then: $V\leq G$ is tidy for $\alpha$ if and only if it is $\alpha$-stable; $\con{\alpha} = \bcon{\alpha}$; and $\nub{\alpha} = \ret{\alpha}$. 
\endproof
\end{proposition}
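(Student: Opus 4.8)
The plan is to isolate one elementary observation that drives all three claims: since $G$ is compact, $G$ itself belongs to $\mathcal{B}(G)$ (it is compact and open in itself) and is trivially $\alpha$-stable, so Definition~\ref{defn:scale&minim} gives $s(\alpha)\le [\alpha(G):\alpha(G)\cap G]=1$; hence $s(\alpha)=1$, and applying the same remark to $\alpha^{-1}$ gives $s(\alpha^{-1})=1$. Everything else I would deduce from this fact together with Theorem~\ref{thm:tidiness}.

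For the characterization of tidy subgroups I would first dispatch the easy direction. If $\alpha(V)=V$ then $\alpha^k(V)=V$ for every $k\in\mathbb{Z}$, so $V_+=V_-=V$; then $V=V_+V_-$ gives TA$(\alpha)$, while $V_{++}=V_{--}=V$ is compact, hence closed, giving TB$(\alpha)$, so $V$ is tidy. For the converse I would use the scale. If $V$ is tidy, then it is minimizing, so Theorem~\ref{thm:tidiness} yields $[\alpha(V_+):V_+]=s(\alpha)=1$; since $V_+\le \alpha(V_+)$ holds directly from the definition of $V_+$, this forces $\alpha(V_+)=V_+$. The conditions TA and TB are symmetric under interchanging $\alpha$ and $\alpha^{-1}$ (this swaps $V_+$ with $V_-$ and $V_{++}$ with $V_{--}$), so $V$ is also tidy for $\alpha^{-1}$; applying the same step to $\alpha^{-1}$ and using $s(\alpha^{-1})=1$ gives $\alpha(V_-)=V_-$. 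Then TA$(\alpha)$ yields $\alpha(V)=\alpha(V_+)\alpha(V_-)=V_+V_-=V$, as required.

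The remaining two equalities I expect to be short once the first is in hand. For $\con{\alpha}=\bcon{\alpha}$, compactness of $G$ makes the defining condition of $\bcon{\alpha}$ vacuous: every subset of $G$, in particular every backward orbit $\{\alpha^n(x)\}_{n\le 0}$, has compact closure, so the two sets coincide. For $\nub{\alpha}=\ret{\alpha}$, the first part lets me replace ``tidy'' by ``$\alpha$-stable'' in both defining intersections; and for an $\alpha$-stable $V$ one has $V_-=V$, whence $V_{--}=\bigcup_{k\ge 0}\alpha^{-k}(V_-)=V$. The two intersections are therefore taken over identical families with identical terms, giving $\nub{\alpha}=\ret{\alpha}$.

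The only genuine work lies in the converse half of the first statement, and the step where I expect the argument to stand or fall is the passage from tidiness to $\alpha$-invariance of $V_+$ and $V_-$. A naive measure-theoretic attempt, comparing $\mu(\alpha^k(V_+))$ with $\mu(V_+)$ for Haar measure $\mu$, stalls because $V_+$ need not have positive measure, so it is essential to invoke the \emph{exact} index formula $s(\alpha)=[\alpha(V_+):V_+]$ from Theorem~\ref{thm:tidiness} rather than a measure count; once $s(\alpha)=1$ is established, this formula makes the invariance immediate and the rest of the proof is formal.
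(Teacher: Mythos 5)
Your proof is correct, and since the paper states this proposition with no proof (it is flagged as one of the groups that are ``clearly equal'' when $G$ is compact), your argument simply supplies the intended routine verification. The key observations --- that $G\in\mathcal{B}(G)$ forces $s(\alpha)=s(\alpha^{-1})=1$, that the index formula $s(\alpha)=[\alpha(V_+):V_+]$ from Theorem~\ref{thm:tidiness} together with $V_+\leq\alpha(V_+)$ then yields $\alpha$-stability of tidy subgroups, and that compactness trivializes the boundedness condition in $\bcon{\alpha}$ and collapses $V_{--}$ to $V$ --- are exactly the ones the author is relying on.
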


\begin{remark}
{\it (a)\/} Contraction groups for an automorphism are important in representation theory, \cite{Moore, Wang} and the study of convolution semigroups, \cite{HazSie,Sie}. Their role in the structure theory of totally disconnected, locally compact groups is worked out in \cite{BaumW_Cont,GlockCont,WJaw,WJaw2}. A rather complete structure theorem for closed contraction groups, that is, when $\con{\alpha} = \ret{\alpha}$, is proved in \cite{GWClosedCont}.  \\
{\it (b)\/} The contraction group is a dense subgroup of the retraction group, as shown in \cite[Theorem~3.26]{BaumW_Cont} for metrizable groups and in~\cite{WJaw}  in the general case. The contraction group is closed, and hence equal to the retraction group, if and only if $\nub{\alpha} = \triv$, which occurs if and only if $\bcon{\alpha}  = \triv$, see~\cite[Theorem~3.32(4)]{BaumW_Cont}. \\ 
{\it (c)\/} As a special case, it is shown in~\cite[Lemma~3.31(2)]{BaumW_Cont} that $\nub{\alpha}$ is equal to the closure of $\bcon{\alpha}$ when~$G$ is metrizable. Also see Proposition~\ref{prop:con_dense} in this paper. \\
 {\it (d)\/} Lemma~3.28 in \cite{BaumW_Cont} shows that $V_{--}\cap V_{++} = V_0 \dfn \bigcap_{n\in\mathbb{Z}} \alpha^n(V)$ for every subgroup tidy for $\alpha$. Hence 
$\nub{\alpha} = \ret{\alpha}\cap\ret{\alpha^{-1}}$. It is a natural question therefore whether $\bcg{\alpha}$ is dense in $\nub{\alpha}$. Proposition~\ref{prop:con_dense}  shows that the closure of $\bcg{\alpha}$ contains the commutator subgroup of $\nub{\alpha}$, but Example~\ref{ex:con_2-ways_not_dense} shows that it may be a proper subgroup. 
\end{remark}

The following additional subgroup associated with $\alpha$ was introduced in~\cite{Wi:SimulTriang}. 
\begin{definition}
\label{defn:relative_contraction}
Let $\alpha\in \Aut(G)$ and $V\in {\mathcal B}(G)$. 
The \emph{bounded contraction group relative to $V$} is
\begin{eqnarray*}
\rbcon{\alpha}{V} &\dfn& \left\{ x\in G \mid \exists N\hbox{ such that }\alpha^n(x)\in V \hbox{ for all } n\geq N\right.\\
&& \qquad\qquad \left. \hbox{ and }\left\{\alpha^n(x)\right\}_{n\leq 0}\text{ has compact closure} \right\}.
\end{eqnarray*}
\end{definition}
\noindent This subgroup was used in an argument concerning multiple commuting automorphisms for which it was necessary to characterize property $\text{\bf TB}(\alpha)$ independently of $\text{\bf TA}(\alpha)$. This characterization, established in \cite[Proposition~2.1]{Wi:SimulTriang}, is that any compact, open subgroup containing $\bigcap\left\{ \overline{\rbcon{\alpha}{V}}\mid {V\in \mathcal{B}(G)}\right\}$ is tidy below\footnote{The subgroup $\rbcon{\alpha}{V}$ is not the same as the \emph{contraction group modulo a compact, $\alpha$-stable subgroup $K$} studied in \cite{BaumW_Cont,WJaw,WJaw2}.}. 

By definition, $\bcon{\alpha} = \bigcap\left\{{\rbcon{\alpha}{V}}\mid {V\in \mathcal{B}(G)}\right\}$. Hence
\begin{equation}
\label{eq:bcon_and_rbcon}
\overline{\bcon{\alpha}} \leq \bigcap\left\{\overline{\rbcon{\alpha}{V}}\mid {V\in \mathcal{B}(G)}\right\}.
\end{equation}
Section~\ref{sec:nub} returns to the question of characterizing property $\text{\bf TB}(\alpha)$ independently of $\text{\bf TA}(\alpha)$ and shows that both groups in (\ref{eq:bcon_and_rbcon}) are in fact equal to $\nub{\alpha}$, thus simplifying some of the arguments in \cite{Wi:SimulTriang}. 

\section{The Nub of an Automorphism and Tidy Subgroups}
\label{sec:nub}

Compact subgroups of $G$ that are stable under $\alpha$ are the subject of this section, and it is seen that all such subgroups are closely associated with subgroups tidy for~$\alpha$. Several characterizations of the nub are derived in addition, and the next few paragraphs explain the significance of these alternative characterizations. 

The terms `tidy above' and `tidy below' refer to a procedure for determining the scale of the automorphism~$\alpha$ and finding a minimizing subgroup. This procedure starts with a given compact, open subgroup, $V$, and modifies it in several steps to produce a minimizing subgroup, see \cite{Wi:structure,wi:further}. The aim at each step is to reduce the index $[\alpha(V) : \alpha(V)\cap V]$.

In the first step, $n$ is chosen as in Lemma~\ref{lem:recalltidy} to find $V'\dfn \bigcap_{k=0}^n \alpha^k(V)$ that satisfies $\text{\bf TA}(\alpha)$. This step reduces $\alpha(V)$ relative to $\alpha(V)\cap V$ by cutting down $V$, thus making $V$ `tidy above'. 

The next step defines the group $L$ to be the closure of 
$$
\mathscr{L} \dfn \left\{ x\in G \mid \alpha^k(x) \in V' \hbox{ for all but finitely many }k\right\}
$$
and shows that $L$ is compact. 

The final step combines $L$ with $V'$ to obtain a compact, open subgroup that satisfies $\text{\bf TB}(\alpha)$ as well. In the papers \cite{Wi:structure} and \cite{wi:further} this is done in two different ways that are described in Remark~\ref{rem:TBfirst}. This step increases $\alpha(V)\cap V$ relative to $\alpha(V)$ by adding elements to the intersection, thus making $V$ `tidy below'. 

As is apparent from its definition, $L$ depends on the subgroup $V'$ produced in the first step. Tidiness below cannot therefore be achieved by this procedure until after tidiness above. The main aim of this section is to show that $\nub{\alpha}$, the intersection of all subgroups tidy for $\alpha$, may be used in place of $L$ irrespective of the subgroup $V$ that is given. 

\subsection{Tidiness and the nub}
\label{sec:tidy&nub}

The next proposition is essentially \cite[Lemma~3.31(3)]{BaumW_Cont} but, unlike the proof there, does not rely on non-triviality of $\con{\alpha}$. 

\begin{proposition}
\label{prop:nubimpliesT2}
Let $\alpha$ be an automorphism of $G$. Then a compact, open subgroup that contains $\nub{\alpha}$ and satisfies $\text{\bf TA}(\alpha)$ is tidy for $\alpha$. 
\end{proposition}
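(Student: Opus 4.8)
The plan is to bypass the contraction group entirely and instead reduce the statement to the criterion for tidiness below recorded above, namely \cite[Proposition~2.1]{Wi:SimulTriang}: a compact, open subgroup is tidy below whenever it contains $I:=\bigcap\{\overline{\rbcon{\alpha}{V}}\mid V\in\mathcal{B}(G)\}$. Since the hypotheses already provide $\text{\bf TA}(\alpha)$, it suffices to establish tidiness below, and for that it is enough to prove the single containment $I\le\nub{\alpha}$: the assumption $\nub{\alpha}\le V$ then gives $I\le V$, so $V$ is tidy below, and combined with $\text{\bf TA}(\alpha)$ this makes $V$ tidy.

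To prove $I\le\nub{\alpha}=\bigcap\{W\mid W\in\mathcal{B}(G)\text{ is tidy for }\alpha\}$, I would fix an arbitrary tidy subgroup $W$ and prove the sharper inclusion $\rbcon{\alpha}{W}\le W$. Taking closures (as $W$ is closed) gives $\overline{\rbcon{\alpha}{W}}\le W$, hence $I\le W$; intersecting over all tidy $W$ then yields $I\le\nub{\alpha}$.

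The heart of the matter, and the step I expect to be the main obstacle, is the inclusion $\rbcon{\alpha}{W}\le W$ for a genuinely tidy $W$. Given $x\in\rbcon{\alpha}{W}$ there is $N$ with $\alpha^{n}(x)\in W$ for all $n\ge N$, while $\{\alpha^{n}(x)\}_{n\le 0}$ is relatively compact. The idea is to translate $x$ forward into the part $W_{-}$ of $W$: for any $m\ge N$ the forward orbit of $\alpha^{m}(x)$ lies in $W$, so $\alpha^{m}(x)\in\bigcap_{j\ge0}\alpha^{-j}(W)=W_{-}\le W$. Moreover the full two-sided orbit of $\alpha^{m}(x)$ is bounded, being contained in the union of the compact set $W$, the relatively compact backward orbit of $x$, and the finitely many intermediate points. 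Lemma~\ref{lem:criterion}(\ref{lem:criterionii}), applied to $W$ and the element $\alpha^{m}(x)\in W$ whose orbit is bounded, then forces $\{\alpha^{n}(\alpha^{m}(x))\}_{n\in\mathbb{Z}}\le W$; in particular $x=\alpha^{-m}(\alpha^{m}(x))\in W$.

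The only point requiring care is that Lemma~\ref{lem:criterion} presupposes membership of its element in the tidy subgroup --- precisely the conclusion one is after --- but the forward translation $x\mapsto\alpha^{m}(x)\in W_{-}$ supplies exactly this membership, so there is no circularity. With $\rbcon{\alpha}{W}\le W$ in hand the remaining steps are formal, and, as promised, the argument invokes neither $\con{\alpha}$ nor its non-triviality.
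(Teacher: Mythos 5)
Your argument is correct, but it takes a genuinely different route from the paper's. The paper proves the proposition by running the tidying procedure on $W$ directly: it shows the auxiliary subgroup $\mathscr{L}=\{x\mid \alpha^k(x)\in W\text{ for all but finitely many }k\}$ lies in $W$, using the existence of a tidy subgroup $V\leq W$ (a finite intersection of tidy subgroups inside the open neighbourhood $W$ of $\nub{\alpha}$) together with a compactness argument that pushes a representative of $\mathscr{L}$ into $W_0V$ before invoking Lemma~\ref{lem:criterion}(\ref{lem:criterionii}). You instead outsource tidiness below to \cite[Proposition~2.1]{Wi:SimulTriang} and reduce everything to the inclusion $\rbcon{\alpha}{W'}\leq W'$ for an arbitrary tidy $W'$, which you prove cleanly: translating forward past the threshold $N$ puts $\alpha^m(x)$ in $W'_-\leq W'$, the full orbit is bounded, and Lemma~\ref{lem:criterion}(\ref{lem:criterionii}) does the rest. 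This is sound (the circularity you worry about is genuinely absent, since membership in $W'$ is supplied before the lemma is applied), and as a bonus it gives a direct proof that $\bigcap\{\overline{\rbcon{\alpha}{V}}\mid V\in\mathcal{B}(G)\}\leq\nub{\alpha}$, a containment the paper only establishes later in Theorem~\ref{thm:charnub} by a compactness-plus-maximality argument. What your route costs is self-containedness: the entire burden of ``contains the intersection $\Rightarrow$ tidy below'' is carried by the external citation \cite[Proposition~2.1]{Wi:SimulTriang}, whereas one of the stated aims of this section is to re-derive and simplify the machinery of that paper (and of \cite[Lemma~3.31]{BaumW_Cont}) without relying on it; so you should at least verify that the proof of the cited proposition does not itself pass through density of $\con{\alpha}$ or through the very statement being proved. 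The paper's proof, by contrast, needs only Lemma~\ref{lem:recalltidy}, Lemma~\ref{lem:criterion} and the fact that finite intersections of tidy subgroups are tidy.
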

\begin{proof}
Suppose that $W$ is a compact open subgroup of $G$ with $\nub{\alpha}\leq W$ and $W = W_+W_-$. Denote $W_+\cap W_- = \bigcap_{n\in {\mathbb Z}} \alpha^n(W)$ by $W_0$. Applying the tidying procedure to $W$ it suffices, in order to see that $W$ is tidy, to show that the subgroup
$$
\mathscr{L} \dfn \left\{ x\in G \mid \alpha^k(x) \in W \hbox{ for all but finitely many }k\right\}
$$
is contained in $W$. 

Let $x\in \mathscr{L}$ and observe that there is $k\in \mathbb{Z}$ such that $\alpha^k(x)\in W_+$. Without loss of generality, suppose that $x\in W_+$. Since $W$ is a neighbourhood of $\nub{\alpha}$, there is a subgroup $V\leq W$ that is tidy for $\alpha$. Next, since $W_0V$ is a neighbourhood of $W_0$ and $\bigcap_{n\geq0} \alpha^{-n}(W_+) = W_0$ is an intersection of compact sets, there is an integer $m$ such that $\alpha^{-m}(W_+)\subset W_0V$. Hence, replacing $x$ by $\alpha^{-m}(x)$ for sufficiently large $m$, it may be supposed that $x \in W_0V$. Writing $x = x_0v$, the orbit $\{\alpha^n(v)\}_{n\in {\mathbb Z}}$ is bounded because $\alpha^n(v) = \alpha^n(x_0)^{-1}\alpha^n(x)$ for each $n$. Since $v$ belongs $V$ which is tidy for $\alpha$, it follows by Lemma~\ref{lem:criterion}(\ref{lem:criterionii}) that $v\in V_0  \dfn \bigcap_{n\in {\mathbb Z}} \alpha^n(V)\leq W_0$. Therefore $x\in W_0V_0 = W_0$ and $\alpha^n(x)\in W$ for all $n$ as required. 
\end{proof}

Since the nub is $\alpha$-stable, it follows from Lemma~\ref{lem:recalltidy}(\ref{lem:recalltidyi}) that 
\begin{corollary}
\label{cor:nubT2auto}
Let $V$ be a compact, open subgroup of $G$ and suppose that $V$ contains $\nub{\alpha}$. Then there is $n\in \mathbb{N}$ such that $\bigcap_{k=0}^n \alpha^k(V)$ is tidy for $\alpha$. \endproof
\end{corollary}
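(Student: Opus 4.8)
The plan is to combine Lemma~\ref{lem:recalltidy}(\ref{lem:recalltidyi}), which produces a subgroup tidy above, with Proposition~\ref{prop:nubimpliesT2}, which upgrades tidiness above to full tidiness as soon as the subgroup in question contains $\nub{\alpha}$. The only point requiring care is checking that passing to the finite intersection $\bigcap_{k=0}^n \alpha^k(V)$ does not destroy the containment of the nub, and this is exactly where the $\alpha$-stability of $\nub{\alpha}$ enters.

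First I would apply Lemma~\ref{lem:recalltidy}(\ref{lem:recalltidyi}) to the compact, open subgroup $V$ to obtain $n\in\mathbb{N}$ such that $V' \dfn \bigcap_{k=0}^n \alpha^k(V)$ is tidy above for $\alpha$, that is, satisfies $\text{\bf TA}(\alpha)$. As a finite intersection of compact, open subgroups, $V'$ is itself compact and open.

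Next I would verify that $\nub{\alpha}\leq V'$. Since $\nub{\alpha}$ is $\alpha$-stable, we have $\alpha^k(\nub{\alpha}) = \nub{\alpha}$ for every $k$; applying $\alpha^k$ to the hypothesis $\nub{\alpha}\leq V$ therefore gives $\nub{\alpha} = \alpha^k(\nub{\alpha})\leq \alpha^k(V)$ for each $k$ with $0\leq k\leq n$, and intersecting over these $k$ yields $\nub{\alpha}\leq V'$.

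Finally, $V'$ is a compact, open subgroup that contains $\nub{\alpha}$ and satisfies $\text{\bf TA}(\alpha)$, so Proposition~\ref{prop:nubimpliesT2} applies directly and shows that $V'$ is tidy for $\alpha$, as required. There is no genuine obstacle here: the substance of the argument has already been carried out in Proposition~\ref{prop:nubimpliesT2}, and the $\alpha$-stability of the nub is precisely what guarantees that the tidying-above step of Lemma~\ref{lem:recalltidy} can be performed without leaving the class of subgroups to which that proposition applies.
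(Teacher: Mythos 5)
Your argument is correct and is exactly the one the paper intends: the corollary follows from Lemma~\ref{lem:recalltidy}(\ref{lem:recalltidyi}) together with Proposition~\ref{prop:nubimpliesT2}, with the $\alpha$-stability of $\nub{\alpha}$ guaranteeing that the finite intersection still contains the nub. No differences worth noting.
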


A close relationship between $\nub{\alpha}$ and condition $\text{\bf TB}(\alpha)$ also follows. 
\begin{corollary}
\label{cor:nubimpliesT2}
The compact, open subgroup, $V\leq G$ is tidy below for $\alpha\in \Aut(G)$ if and only if $V$ contains $\nub{\alpha}$. 
\end{corollary}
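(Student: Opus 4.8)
The plan is to deduce both implications from a single comparison between $V$ and the shrunken subgroup $V' := \bigcap_{k=0}^{n}\alpha^k(V)$, for an appropriately chosen $n\geq 0$. The point is that passing from $V$ to $V'$ affects the ``tidy below'' data only by a homeomorphism, so that tidiness below is insensitive to this operation, while $V'$ can be arranged to be tidy (and hence to contain $\nub{\alpha}$), or is already known to be tidy, depending on the direction being proved.

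First I would record the key identities. Reindexing the doubly-indexed intersections shows that $V'_+ = V_+$ and $V'_- = \alpha^n(V_-)$, and consequently $V'_{++} = V_{++}$ and $V'_{--} = \alpha^n(V_{--})$. Because $\alpha^n$ is a homeomorphism of $G$, the sets $V_{++}$, $V_{--}$ are closed if and only if $V'_{++}$, $V'_{--}$ are closed; that is, for every $n\geq 0$, $V$ is tidy below for $\alpha$ if and only if $V'$ is tidy below for $\alpha$.

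For the forward implication, suppose $V$ is tidy below. By Lemma~\ref{lem:recalltidy}(\ref{lem:recalltidyi}) I may choose $n$ so that $V' = \bigcap_{k=0}^n\alpha^k(V)$ is tidy above. By the previous paragraph $V'$ is then also tidy below, hence tidy, so $\nub{\alpha}\leq V'$ directly from the definition of $\nub{\alpha}$ as the intersection of all subgroups tidy for $\alpha$. Since $V'\leq V$, this yields $\nub{\alpha}\leq V$. For the converse, suppose $\nub{\alpha}\leq V$. Corollary~\ref{cor:nubT2auto} then furnishes an $n$ for which $V' = \bigcap_{k=0}^n\alpha^k(V)$ is tidy, in particular tidy below, and the comparison above transfers tidiness below back to $V$.

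The only step that needs care is the index bookkeeping in the key identities: one must check that $\bigcap_{k\geq 0}\alpha^{-k}\bigl(\bigcap_{j=0}^n\alpha^j(V)\bigr)$ collapses to $\alpha^n(V_-)$ (and not to $V_-$), so that the two ``$--$'' unions differ by the homeomorphism $\alpha^n$ while the two ``$++$'' unions coincide exactly. I expect this to be the main---though entirely routine---obstacle; once it is in place, both directions follow immediately from Lemma~\ref{lem:recalltidy}, Corollary~\ref{cor:nubT2auto}, and the definition of the nub.
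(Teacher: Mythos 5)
Your proof is correct and takes essentially the same route as the paper's: both pass to $W=\bigcap_{k=0}^n\alpha^k(V)$, which is tidy above by Lemma~\ref{lem:recalltidy}, observe that the $V_{++}$ (and $V_{--}$) data is unchanged up to a homeomorphism, and then invoke Proposition~\ref{prop:nubimpliesT2} (equivalently Corollary~\ref{cor:nubT2auto}) in one direction and the definition of $\nub{\alpha}$ as the intersection of all tidy subgroups in the other. The only difference is cosmetic: you track the $V_{--}$ bookkeeping explicitly, where the paper's proof records only that $W_{++}=V_{++}$.
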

\begin{proof}
It must be shown that $V_{++}$ is closed if and only if $\nub{\alpha}\leq V$. Choose $n\in {\mathbb N}$ such that $W := \bigcap_{k=0}^n \alpha^k(V)$ is tidy above for $\alpha$. Then $W_+ = V_+$ and $W_{++} = V_{++}$. Suppose that $\nub{\alpha}\leq V$. Then $\nub{\alpha}\leq W$ and $W_{++} = V_{++}$ is closed by Proposition~\ref{prop:nubimpliesT2}. Conversely, if $V_{++}$ is closed, then so is $W_{++}$ and $W$ is tidy for $\alpha$. Then $\nub{\alpha}\leq W \leq V$.   
\end{proof}

\subsection{Tidiness and stability}
Tidy subgroups are minimizing for $\alpha$ and, in that sense, come as close as is possible for a  compact, open subgroup to be stable under~$\alpha$. It is seen next that stable compact subgroups are closely associated with tidy ones.
   
\begin{proposition}
\label{prop:nub}
Let $\alpha\in\Aut(G)$ and suppose that $N$ is a compact, $\alpha$-stable subgroup of~$G$ with $\nub{\alpha}\leq N$. Then:
\begin{enumerate}
\item for any open $\open{U}\supset N$ there is an $\alpha$-tidy subgroup $V$ with $N\leq V \subset \open{U}$;
\label{lem:nubi}
\item for any relatively open, $\alpha$-stable subgroup, $J$, of $N$ there is an $\alpha$-tidy subgroup,~$W$, such that $J = W\cap N$. 
\label{lem:nubii}
\end{enumerate}
\end{proposition}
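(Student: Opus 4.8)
The plan is to reduce both parts to the single fact, already available in Corollary~\ref{cor:nubT2auto}, that a compact, open subgroup containing $\nub{\alpha}$ can be shrunk to a tidy one by intersecting finitely many of its forward images; the $\alpha$-stability of $N$ (and of $J$) will ensure that this operation does not disturb the relation with $N$ that we wish to control.

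For part~(\ref{lem:nubi}) I would first produce a compact, open subgroup $V_0$ of $G$ with $N\leq V_0\subseteq \open{U}$. Such a subgroup exists by the standard structure theory: $N$, being compact, lies in some compact, open subgroup $V_2$ (for instance $V_2 = NW'$, where $W'=\bigcap_{n\in N}nWn^{-1}$ is the compact, open subgroup obtained from any $W\in\mathcal{B}(G)$ by intersecting its finitely many $N$-conjugates, so that $N$ normalises $W'$). Working inside the profinite group $V_2$, whose open normal subgroups $K$ satisfy $N=\bigcap_K NK$, one chooses $K\trianglelefteq V_2$ with $NK\subseteq\open{U}$ and sets $V_0 = NK$. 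Since $N\leq V_0$ and $\nub{\alpha}\leq N$, Corollary~\ref{cor:nubT2auto} supplies $n\in\mathbb{N}$ for which $V\dfn\bigcap_{k=0}^n\alpha^k(V_0)$ is tidy for $\alpha$; as $N$ is $\alpha$-stable we have $N=\alpha^k(N)\leq\alpha^k(V_0)$ for each $k$, hence $N\leq V\subseteq V_0\subseteq\open{U}$, which is what is required.

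Part~(\ref{lem:nubii}) runs along the same lines, the extra point being to arrange $W\cap N=J$ exactly. Because $J$ is relatively open in $N$, it contains $N\cap K$ for some open normal $K\trianglelefteq V_2$; then $V_0\dfn JK$ is a compact, open subgroup with $V_0\cap N=J$ (the inclusion $\subseteq$ uses $N\cap K\leq J$). The same calculation as before, now using that \emph{both} $N$ and $J$ are $\alpha$-stable, gives $\alpha^k(V_0)\cap N=\alpha^k(V_0\cap N)=\alpha^k(J)=J$, so that intersecting forward images preserves the intersection with $N$: if $V_0\supseteq\nub{\alpha}$ then Corollary~\ref{cor:nubT2auto} yields a tidy $W=\bigcap_{k=0}^n\alpha^k(V_0)$ with $W\cap N=\bigcap_{k=0}^n\alpha^k(J)=J$, as desired.

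The main obstacle is the hypothesis $V_0\supseteq\nub{\alpha}$ just used, that is, the claim that $\nub{\alpha}\leq J$. This is in fact forced: any tidy $W$ satisfies $\nub{\alpha}\leq W$ by Corollary~\ref{cor:nubimpliesT2}, so $W\cap N=J$ can only hold if $J\supseteq\nub{\alpha}\cap N=\nub{\alpha}$. I would establish it by showing that $\nub{\alpha}$ has no proper relatively open, $\alpha$-stable subgroup and then applying this to the relatively open, $\alpha$-stable subgroup $J\cap\nub{\alpha}$ of $\nub{\alpha}$. For the no-proper-subgroup statement, pass to the normal core $C$ of such a subgroup in $\nub{\alpha}$: $C$ is open, $\alpha$-stable and of finite index, so $\alpha$ induces an automorphism $\bar\alpha$ of the finite group $\nub{\alpha}/C$. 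Since $\alpha$ acts topologically transitively on $\nub{\alpha}$, $\bar\alpha$ would have a dense, hence full, orbit on $\nub{\alpha}/C$; but an automorphism fixes the identity and is injective, so no orbit other than $\{\bar e\}$ can contain $\bar e$, forcing $\nub{\alpha}/C$ to be trivial. This is the crux, and it is exactly where the topological transitivity of $\alpha$ on the nub enters; everything else is bookkeeping with $\alpha$-stable intersections.
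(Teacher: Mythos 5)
Part~(\ref{lem:nubi}) of your argument is correct and is essentially the proof in the paper: find a compact, open subgroup between $N$ and $\open{U}$ and shrink it to a tidy one with Corollary~\ref{cor:nubT2auto}, using $\alpha$-stability of $N$ to keep $N$ inside. Your reduction of part~(\ref{lem:nubii}) to the single containment $\nub{\alpha}\leq J$ is also a correct observation (indeed, since every tidy subgroup contains $\nub{\alpha}$ and $\nub{\alpha}\leq N$, the conclusion $W\cap N=J$ forces it), and the bookkeeping with $V_0=JK$ and $\alpha$-stable intersections is fine.

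The genuine gap is in your proof of that crux containment: you invoke the topological transitivity of $\alpha$ on $\nub{\alpha}$, but in this paper that fact is Proposition~\ref{prop:nubergodic}, which is proved \emph{after} and \emph{from} the present proposition. Concretely, Proposition~\ref{prop:nubergodic} rests on Corollary~\ref{cor:nubmax}, whose assertion that $\nub{\alpha}$ is the \emph{largest} compact $\alpha$-stable subgroup with no proper relatively open $\alpha$-stable subgroups requires knowing that $\nub{\alpha}$ itself has no such subgroups --- and that is Corollary~\ref{cor:nub}, which is deduced as the special case $N=\nub{\alpha}$ of part~(\ref{lem:nubii}). So your argument is circular as it stands: the statement ``$\nub{\alpha}$ has no proper relatively open $\alpha$-stable subgroup'' is exactly what this proposition is designed to deliver, and the only thing $\nub{\alpha}$ is known to be at this point is the intersection of all tidy subgroups. (Your finite-quotient argument from transitivity is itself fine; the unavailable premise is the problem.) The paper avoids this by never establishing $\nub{\alpha}\leq J$ in advance: it takes a tidy $V$ with $N\leq V\leq V_3N$ from part~(\ref{lem:nubi}), sets $W=V\cap V_3J$ so that $W\cap N=J$ by construction, and then proves $W$ is tidy by an index-counting argument --- the injection of $\alpha(W)/(\alpha(W)\cap W)$ into $\alpha(V)/(\alpha(V)\cap V)$ shows $[\alpha(W):\alpha(W)\cap W]\leq[\alpha(V):\alpha(V)\cap V]=s(\alpha)$, so $W$ is minimizing, hence tidy. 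To repair your route you would need an independent proof that $\nub{\alpha}$ has no proper relatively open $\alpha$-stable subgroups (or of transitivity on the nub) that does not pass through this proposition.
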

\begin{proof}
(\ref{lem:nubi}) Since $\open{U}$ is an open neighbourhood of $N$, there is an open subgroup, $W$, with $N\leq W\subset \open{U}$. Then $\nub{\alpha}\leq W$ and so, by Corollary~\ref{cor:nubT2auto}, there is an integer $n$ such that $\bigcap_{k=0}^n \alpha^k(W) =: V$ is $\alpha$-tidy. It is clear that $V\subset \open{U}$ and that $N\leq V$ follows from the fact that $N$ is $\alpha$-stable. 

(\ref{lem:nubii}) 
(The claim would follow immediately from (\ref{lem:nubi}) if $\nub{\alpha}$ were contained in~$J$. That is not being assumed however.) Since $J$ is open in $N$, there is an open subgroup,  $V_1$ say, of $G$ such that $V_1J \cap N = J$. The smaller open subgroup 
$$
V_2 := \bigcap\left\{ xV_1x^{-1} \mid x\in N \right\}
$$ 
is normalized by $N$, so that the products $V_2 N$ and $V_2J$ are subgroups of~$G$. 
The even smaller subgroup 
$$
V_3 := V_2\cap \alpha^{-1}(V_2)
$$ 
is normalized by $N$, because $N$ is $\alpha$-stable, and satisfies $(V_3J)\alpha(V_3J)\subset V_2J$, because $J$ is $\alpha$-stable. Hence $V_3J$ is an open subgroup of $G$ and satisfies
\begin{equation}
\label{eq:V3}
(V_3J)\alpha(V_3J) \cap N \leq V_2J \cap N = J.
\end{equation}

By part (\ref{lem:nubi}), there is an $\alpha$-tidy subgroup, $V$, between $N$ and $V_3N$. Then 
\begin{equation}
\label{eq:V'defn}
W := V\cap V_3J
\end{equation}
is a compact, open subgroup of $G$ and $W\cap N = J$. The desired conclusion will be reached once it is shown that  $W$ is tidy. 

To this end, note first that $V = WN$ because each $v\in V$ is a product $v = xy$ with $x\in V_3$ and $y\in N$ where, moreover, $x = vy^{-1}$ belongs to $V_3\cap V$, so that in fact $x\in W$. Next, consider  
$$
\alpha(W)\cap V = \alpha(W)\cap WN.
$$
If $x$ belongs to this intersection, then $x = wz$ where $w\in W$ and $z\in N$. Then $z = w^{-1}x\in W\alpha(W)\cap N =J$, by (\ref{eq:V'defn}) and (\ref{eq:V3}). Hence $\alpha(W)\cap V \leq W$ and the map
$$
 \alpha(W)/\left(\alpha(W)\cap  W\right) \to \alpha(V)/\left(\alpha(V)\cap V\right) : x\left(\alpha(W)\cap W\right) \mapsto x\left(\alpha(V)\cap V\right)
 $$
  is injective. It follows that
  $$
  \left[\alpha(W) : \alpha(W)\cap W\right]  \leq \left[\alpha(V) : \alpha(V)\cap V\right]. 
  $$
  However the index on the right is the minimum possible because $V$ is tidy for $\alpha$ and it follows that $W$ is minimizing and therefore is tidy too. 
\end{proof}

The special case when $N = \nub{\alpha}$ yields \cite[Lemma~3.31(2)]{BaumW_Cont}, but without using density of $\con{\alpha}$ in the proof.
\begin{corollary}
\label{cor:nub}
The nub of ${\alpha}$ has no proper, relatively open $\alpha$-stable subgroups. \endproof
\end{corollary}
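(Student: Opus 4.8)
The plan is to obtain this as an immediate specialization of Proposition~\ref{prop:nub}(\ref{lem:nubii}) to the case $N = \nub{\alpha}$, using that the nub, being \emph{defined} as the intersection of all tidy subgroups, is contained in every one of them.

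First I would let $J$ be an arbitrary relatively open, $\alpha$-stable subgroup of $\nub{\alpha}$ and aim to show $J = \nub{\alpha}$, which is exactly the assertion that no such $J$ can be proper. The subgroup $N := \nub{\alpha}$ is compact and $\alpha$-stable and trivially satisfies $\nub{\alpha}\leq N$, so Proposition~\ref{prop:nub}(\ref{lem:nubii}) applies and furnishes an $\alpha$-tidy subgroup $W$ with $J = W\cap \nub{\alpha}$.

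Next I would invoke the definition
$$
\nub{\alpha} = \bigcap\left\{ V \mid V\in {\mathcal B}(G)\hbox{ is tidy for }\alpha\right\},
$$
which shows that $\nub{\alpha}$ is contained in every subgroup tidy for $\alpha$; in particular $\nub{\alpha}\leq W$. Consequently $W\cap\nub{\alpha} = \nub{\alpha}$, and combining this with $J = W\cap\nub{\alpha}$ gives $J = \nub{\alpha}$, as required.

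There is essentially no obstacle, since all of the substance has already been placed in Proposition~\ref{prop:nub}(\ref{lem:nubii}). The only point deserving attention is the step that looks almost tautological but carries the whole argument: because $\nub{\alpha}$ is the intersection of the tidy subgroups, it lies inside each tidy $W$, and this collapses the intersection $W\cap\nub{\alpha}$ to $\nub{\alpha}$ itself, leaving no room for a proper relatively open $\alpha$-stable subgroup.
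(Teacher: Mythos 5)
Your proof is correct and is precisely the argument the paper intends: the corollary is the special case $N=\nub{\alpha}$ of Proposition~\ref{prop:nub}(\ref{lem:nubii}), with the observation that the tidy subgroup $W$ produced there necessarily contains $\nub{\alpha}$ by the definition of the nub as the intersection of all tidy subgroups, forcing $J=W\cap\nub{\alpha}=\nub{\alpha}$.
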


The first part of the next result follows from \cite[Lemma~3.19]{BaumW_Cont} but a short proof is included here. 
\begin{proposition}
\label{prop:tidymeetsstable}
Let $\alpha\in \Aut(G)$. Then the following hold for any $V\in {\mathcal B}(G)$. 
\begin{enumerate}
\item If $V$ is tidy for $\alpha$, then $V\cap K$ is $\alpha$-stable for any compact $\alpha$-stable subgroup $K$ of $G$. \label{lem:tidymeetsstablei}
\item If $V\cap K$ is $\alpha$-stable for any compact $\alpha$-stable subgroup $K$ of $G$, then $V$ is tidy below for $\alpha$ but need not be tidy above. \label{lem:tidymeetsstableii}
\end{enumerate}
\end{proposition}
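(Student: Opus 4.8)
The plan is to treat the two parts separately, handling part~(\ref{lem:tidymeetsstablei}) and the positive half of part~(\ref{lem:tidymeetsstableii}) with the tools already assembled, and reserving the bulk of the work for the counterexample showing that tidiness above can fail.

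For part~(\ref{lem:tidymeetsstablei}), suppose $V$ is tidy and $K$ is a compact, $\alpha$-stable subgroup, and let $x\in V\cap K$. Since $K$ is compact and $\alpha$-stable, the two-sided orbit $\{\alpha^n(x)\}_{n\in\ZZ}$ is contained in $K$ and hence bounded. As $x\in V$, Lemma~\ref{lem:criterion}(\ref{lem:criterionii}) applies and forces $\{\alpha^n(x)\}_{n\in\ZZ}\leq V$, so the whole orbit lies in $V\cap K$. In particular $\alpha^{\pm1}(x)\in V\cap K$ for every $x\in V\cap K$, which is exactly the assertion that $V\cap K$ is $\alpha$-stable. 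This direction is immediate once the criterion is invoked, so I expect no obstacle here.

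For the positive half of part~(\ref{lem:tidymeetsstableii}), the key is to recognise $\nub{\alpha}$ as itself a compact, $\alpha$-stable subgroup and feed it into the hypothesis. Taking $K=\nub{\alpha}$, the assumption says $V\cap\nub{\alpha}$ is $\alpha$-stable; it is also relatively open in $\nub{\alpha}$ because $V$ is open and contains the identity. By Corollary~\ref{cor:nub} the nub has no proper, relatively open, $\alpha$-stable subgroup, so $V\cap\nub{\alpha}=\nub{\alpha}$, i.e. $\nub{\alpha}\leq V$. Corollary~\ref{cor:nubimpliesT2} then yields at once that $V$ is tidy below. The only thing to be careful about is verifying that $V\cap\nub{\alpha}$ is genuinely a relatively open subgroup (rather than just a set), which is routine.

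The substantive part, and the main obstacle, is exhibiting a $V$ satisfying the hypothesis that fails to be tidy above. I would take $G=\mathbb{Q}_p\times\mathbb{Q}_p$ with $\alpha(x,y)=(px,p^{-1}y)$, for which $\con{\alpha}=\mathbb{Q}_p\times\{0\}$ and $\con{\alpha^{-1}}=\{0\}\times\mathbb{Q}_p$ are both closed, whence $\nub{\alpha}=\triv$; moreover an element has bounded two-sided orbit only if it is trivial, so the only compact $\alpha$-stable subgroup is $\triv$ and the hypothesis of part~(\ref{lem:tidymeetsstableii}) holds vacuously for \emph{every} open $V$. It then suffices to choose the skew subgroup $V=\{(x,y)\in\mathbb{Z}_p\times\mathbb{Z}_p : x\equiv y \ (\mathrm{mod}\ p)\}$, which is compact, open, and of index $p$ in $\mathbb{Z}_p\times\mathbb{Z}_p$. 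A direct computation gives $V_+=\{0\}\times p\mathbb{Z}_p$ and $V_-=p\mathbb{Z}_p\times\{0\}$, so that $V_+V_-=p\mathbb{Z}_p\times p\mathbb{Z}_p$ is a proper subgroup of $V$ (it misses, for instance, $(1,1)$). Thus $V\neq V_+V_-$ and $V$ is not tidy above, completing the example. The delicate point is selecting $V$ so that it is not already tidy above while keeping the ambient dynamics simple enough that the hypothesis is automatic; the diagonal-versus-skew distinction in $\mathbb{Q}_p^2$ is what makes both requirements hold simultaneously.
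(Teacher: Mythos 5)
Your argument for part~(\ref{lem:tidymeetsstablei}) and for the positive half of part~(\ref{lem:tidymeetsstableii}) is correct and is exactly the paper's: boundedness of the orbit inside $K$ plus Lemma~\ref{lem:criterion}(\ref{lem:criterionii}) for the first part, and the specialization $K=\nub{\alpha}$ combined with Corollary~\ref{cor:nub} and Corollary~\ref{cor:nubimpliesT2} for the second. The one genuine divergence is the counterexample, which is different from the paper's but equally valid. The paper takes $G=F^{[\ZZ]}$ with the discrete topology and the shift $\sigma$: there the only compact $\sigma$-stable subgroup and the only compact $\sigma$-tidy subgroup are both trivial, so \emph{every} non-trivial finite subgroup satisfies the hypothesis while failing to be tidy (hence failing to be tidy above, since it is automatically tidy below). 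Your example instead works inside $G=\mathbb{Q}_p\times\mathbb{Q}_p$ with $\alpha(x,y)=(px,p^{-1}y)$, where only the trivial subgroup is compact and $\alpha$-stable, and you isolate the specific skew congruence subgroup $V=\{(x,y)\in\mathbb{Z}_p\times\mathbb{Z}_p : x\equiv y\ (\mathrm{mod}\ p)\}$; your computation $V_+=\{0\}\times p\mathbb{Z}_p$, $V_-=p\mathbb{Z}_p\times\{0\}$, $V_+V_-=p\mathbb{Z}_p\times p\mathbb{Z}_p\subsetneq V$ is correct. What the paper's example buys is that the failure is generic (no non-trivial compact open subgroup is tidy at all); what yours buys is a counterexample living in a group that possesses plenty of tidy subgroups (e.g.\ $\mathbb{Z}_p\times\mathbb{Z}_p$ itself is tidy for this $\alpha$), which shows the failure of tidiness above is a defect of the particular $V$ rather than of the ambient dynamics, and makes the groups $V_+$, $V_-$ explicitly computable.
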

\begin{proof}
(\ref{lem:tidymeetsstablei}) Let $x\in V\cap K$. Then $\{\alpha^n(x)\}$ is bounded because it is contained in $K$. Hence $\{\alpha^n(x)\}$ is contained in $V$ by Lemma~\ref{lem:criterion}.

(\ref{lem:tidymeetsstableii}) If $V\cap K$ is $\alpha$-stable for any compact $\alpha$-stable $K$, then in particular $V\cap \nub{\alpha}$ is $\alpha$-stable. Since $V\cap \nub{\alpha}$ is an open subgroup of $\nub{\alpha}$, Corollary~\ref{cor:nub} implies that $\nub{\alpha}\leq V$, so that $V$ is tidy below. 

The pair $(F^{[\mathbb{Z}]},\sigma)$, where $F$ is a finite group and $F^{[\mathbb{Z}]}$ has the discrete topology, has the property that only the trivial subgroup is compact and either $\sigma$-stable or $\sigma$-tidy. Thus every open subgroup satisfies that its intersection with any $\sigma$-stable subgroup is $\sigma$-stable, and yet no non-trivial compact, open subgroup is tidy. 
\end{proof}


If $K$ is compact, $\alpha$-stable and has no relatively open $\alpha$-stable subgroups, then Proposition~\ref{prop:tidymeetsstable}(\ref{lem:tidymeetsstablei}) implies that $K$ is contained in every compact, open  subgroup of~$G$ that is tidy for $\alpha$, thus yielding the following characterization of $\nub{\alpha}$.
\begin{corollary}
\label{cor:nubmax}
The nub of ${\alpha}$ is the largest compact, $\alpha$-stable subgroup of $G$ having no relatively open, proper, $\alpha$-stable subgroups. 
\endproof
\end{corollary}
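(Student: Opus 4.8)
The plan is to establish two facts about $\nub{\alpha}$: that it belongs to the class of compact, $\alpha$-stable subgroups of $G$ having no proper, relatively open, $\alpha$-stable subgroup; and that it contains every member of this class. Together these say it is the largest such subgroup.

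To show membership, I would first observe that $\nub{\alpha}$ is compact, being an intersection of the compact tidy subgroups $V$, and that it is $\alpha$-stable, since $\alpha$ permutes the subgroups tidy for $\alpha$ (if $V$ is tidy then so is $\alpha(V)$) and hence fixes their intersection. That $\nub{\alpha}$ has no proper, relatively open, $\alpha$-stable subgroup is exactly Corollary~\ref{cor:nub}. This places $\nub{\alpha}$ in the class.

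For maximality, I would take an arbitrary compact, $\alpha$-stable subgroup $K$ of $G$ with no proper, relatively open, $\alpha$-stable subgroup and aim to prove $K\leq\nub{\alpha}$. Since $\nub{\alpha}$ is the intersection of all $V\in\mathcal{B}(G)$ tidy for $\alpha$, it suffices to show $K\leq V$ for each such $V$. Fixing a tidy $V$, Proposition~\ref{prop:tidymeetsstable}(\ref{lem:tidymeetsstablei}) gives that $V\cap K$ is $\alpha$-stable, while $V\cap K$ is relatively open in $K$ because $V$ is open in $G$. Thus $V\cap K$ is a relatively open, $\alpha$-stable subgroup of $K$, which by the hypothesis on $K$ forces $V\cap K=K$, that is, $K\leq V$. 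Letting $V$ range over all tidy subgroups then yields $K\leq\nub{\alpha}$.

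I do not expect a genuine obstacle, as the argument is assembled entirely from results already in place. The one point that needs care is the logic of the maximality step: the hypothesis on $K$ must be applied to each $V\cap K$ individually rather than to $K\cap\nub{\alpha}$, and it is Proposition~\ref{prop:tidymeetsstable}(\ref{lem:tidymeetsstablei}) that delivers precisely the $\alpha$-stability of $V\cap K$ needed to invoke it. I would also record at the outset that subgroups tidy for $\alpha$ exist, so that the family intersected to define $\nub{\alpha}$ is nonempty and the reduction to a single $V$ is legitimate.
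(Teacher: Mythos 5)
Your proposal is correct and follows essentially the same route as the paper: maximality is obtained by applying Proposition~\ref{prop:tidymeetsstable}(\ref{lem:tidymeetsstablei}) to $V\cap K$ for each tidy $V$ and invoking the hypothesis on $K$, while membership of $\nub{\alpha}$ in the class is exactly Corollary~\ref{cor:nub}. The only difference is that you spell out the membership half explicitly, which the paper leaves implicit.
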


\subsection{Automorphisms acting ergodically}
When $G$ is compact, an open subgroup is tidy for $\alpha$ in $\Aut(G)$ if and only if it is $\alpha$-stable. Hence  $\nub{\alpha}$ is equal to the intersection of all open $\alpha$-stable subgroups of $G$. This intersection is described more precisely in \cite[Theorem~1.4]{KSchmidt}\footnote{This reference treats only the metrizable case.} and \cite[Theorem~2.6]{WJaw2}, which apply to general compact groups and not just totally disconnected ones. Specializing to the totally disconnected case, these papers establish the existence of an ordinal $\tau$ and a decreasing family $(G_\rho)_{\rho\in [0,\tau]}$ of closed, $\alpha$-stable subgroups of $G$ such that: $G_0=G$; $G_{\rho}/G_{\rho+1}$ is finite for every $\rho\in [0,\tau)$; and $\alpha$ acts ergodically on $G_\tau$, which is the largest closed, $\alpha$-stable subgroup of $G$ on which $\alpha$ acts ergodically. 

The subgroup $G_\tau$ in \cite[Theorem~2.6]{WJaw2}, which is denoted by $G_{erg}$ in that paper, is equal to $\nub{\alpha}$. Since, as shown in~\cite{Aoki}, any group on which an automorphism acts ergodically must be compact, Corollary~\ref{cor:nubmax} and \cite{WJaw2} imply the following. 

\begin{proposition}
\label{prop:nubergodic} 
Let $G$ be a totally disconnected, locally compact group and let $\alpha\in \Aut(G)$. Then the action of $\alpha$ on $\nub{\alpha}$ is ergodic and $\nub{\alpha}$ is the largest closed, $\alpha$-stable subgroup of $G$ on which $\alpha$ acts ergodically.
\endproof
\end{proposition}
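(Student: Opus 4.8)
The plan is to reduce everything to the compact setting by restricting $\alpha$ to $\nub{\alpha}$, which is compact and $\alpha$-stable by Corollary~\ref{cor:nubmax}, and then to invoke the results for compact groups recalled just above the proposition, in particular the identification of $\nub{\alpha}$ with the largest ergodic subgroup $G_{erg}$. Throughout I would use the equivalence, noted earlier, of ergodicity with topological transitivity.

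I would first prove that $\alpha$ acts ergodically on $\nub{\alpha}$. Regard $\nub{\alpha}$ as the ambient compact group and consider the restriction $\alpha|_{\nub{\alpha}}$. By Proposition~\ref{prop:ret=nub}, the subgroups of $\nub{\alpha}$ that are tidy for this restriction are exactly its open $\alpha$-stable subgroups, so the nub of $\alpha|_{\nub{\alpha}}$ is the intersection of all open $\alpha$-stable subgroups of $\nub{\alpha}$. By Corollary~\ref{cor:nub} the group $\nub{\alpha}$ has no proper, relatively open $\alpha$-stable subgroup, so this intersection is $\nub{\alpha}$ itself; that is, $\nub{\alpha}$ is its own nub. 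The compact-case identification of the nub with $G_{erg}$ then forces $G_{erg}=\nub{\alpha}$, and since $\alpha$ acts ergodically on $G_{erg}$ it acts ergodically on all of $\nub{\alpha}$.

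Next I would establish maximality. Let $H$ be any closed, $\alpha$-stable subgroup of $G$ on which $\alpha$ acts ergodically. By Aoki's theorem \cite{Aoki}, $H$ is compact. I claim $H$ has no proper, relatively open $\alpha$-stable subgroup: if a proper such subgroup $U$ existed, then, $U$ being both open and closed in $H$ and $\alpha$-stable, any orbit meeting $U$ would remain inside $U$ for all indices by the $\alpha$-invariance $\alpha(U)=U$ together with invertibility of $\alpha$, and so could not be dense; but topological transitivity furnishes a dense orbit, which must meet the nonempty open set $U$, a contradiction. Thus $H$ is a compact, $\alpha$-stable subgroup with no proper, relatively open $\alpha$-stable subgroup, and Corollary~\ref{cor:nubmax}, which identifies $\nub{\alpha}$ as the largest such subgroup, gives $H\leq\nub{\alpha}$. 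Combining the two parts, $\nub{\alpha}$ is a closed, $\alpha$-stable subgroup on which $\alpha$ acts ergodically and which contains every other such subgroup, hence it is the largest.

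The individual verifications are elementary; the one point requiring care is the bridge between the dynamical notion of ergodicity and the algebraic condition ``no proper open $\alpha$-stable subgroup.'' The implication needed for maximality is the easy direction sketched above, whereas the reverse implication --- ergodicity of the restriction once the stable-subgroup obstruction has been removed --- is precisely what the compact-group results of \cite{KSchmidt,WJaw2} supply through the identification $\nub{\alpha}=G_{erg}$. I therefore expect the main obstacle to be organizational rather than technical: citing these compact-case results in exactly the form required and confirming that the reduction to the ambient group $\nub{\alpha}$ is legitimate, rather than reproving the ergodicity criterion from scratch.
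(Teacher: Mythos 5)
Your proposal is correct and follows essentially the same route as the paper: the paper likewise reduces to the compact case (where tidy $=$ open and $\alpha$-stable), identifies $\nub{\alpha}$ with the subgroup $G_{erg}$ of \cite{WJaw2,KSchmidt} to get ergodicity, and obtains maximality from Aoki's compactness theorem together with Corollary~\ref{cor:nubmax}. The only difference is that you spell out explicitly the easy implication ``ergodic $\Rightarrow$ no proper relatively open $\alpha$-stable subgroup'' via the dense-orbit argument, a step the paper leaves implicit in its citation of \cite{WJaw2}.
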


Recall that the the word `ergodic' in the last proposition could be replaced by `topologically transitive'. The next result is a restatement of~\cite[Corollary~2.7(ii)]{WJaw2}. 
\begin{proposition}
\label{prop:ErgQuot}
Let $G$ be a compact group and $\alpha\in \Aut(G)$. Suppose that $N \triangleleft G$ is a closed, normal $\alpha$-stable subgroup. Then $\nub{\alpha|^{G/N}} = {\mathfrak q}_N(\nub{\alpha})$. 
\endproof
\end{proposition}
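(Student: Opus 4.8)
The plan is to use the description, recalled just above the statement, of the nub of an automorphism of a compact group as the intersection of all open subgroups stable under that automorphism, and to show that the quotient homomorphism ${\mathfrak q}_N$ carries this intersection for $\alpha$ onto the corresponding intersection for $\bar\alpha := \alpha|^{G/N}$. Write $q := {\mathfrak q}_N$, let $\mathcal{F}$ denote the set of open $\alpha$-stable subgroups of $G$, and let $\bar{\mathcal{F}}$ denote the set of open $\bar\alpha$-stable subgroups of $G/N$. Since $G$ and $G/N$ are compact, the characterization gives $\nub{\alpha} = \bigcap \mathcal{F}$ and $\nub{\alpha|^{G/N}} = \bigcap \bar{\mathcal{F}}$.

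First I would set up the correspondence between these two families. Because $q$ is a continuous open surjection with kernel $N$, taking preimages and images exchanges $\bar{\mathcal{F}}$ with the subfamily $\mathcal{F}_N := \{V \in \mathcal{F} : N \leq V\}$: if $\bar W \in \bar{\mathcal{F}}$ then $q^{-1}(\bar W) \in \mathcal{F}_N$, while if $V \in \mathcal{F}_N$ then $q(V) = V/N \in \bar{\mathcal{F}}$. I would next observe that passing to $\mathcal{F}_N$ loses nothing at the level of images: for any $V \in \mathcal{F}$ the product $VN$ again lies in $\mathcal{F}$ (as $N$ is normal and $\alpha$-stable), belongs to $\mathcal{F}_N$, and satisfies $q(VN) = q(V)$. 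A short two-sided inclusion then yields $\bigcap_{V \in \mathcal{F}} q(V) = \bigcap_{V \in \mathcal{F}_N} q(V) = \bigcap \bar{\mathcal{F}} = \nub{\alpha|^{G/N}}$. So the proposition reduces to the identity $q\left(\bigcap \mathcal{F}\right) = \bigcap_{V \in \mathcal{F}} q(V)$.

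The one substantive point, and the step I expect to be the main obstacle, is establishing this last identity, since in general a continuous image of an intersection is only contained in the intersection of the images. Here equality does hold, and the reason is compactness together with the fact that $\mathcal{F}$ is directed downward: the intersection of two open $\alpha$-stable subgroups is again one. For $\bar x \in \bigcap_{V} q(V)$, the sets $q^{-1}(\bar x) \cap V$ (for $V \in \mathcal{F}$) are closed, hence compact, subsets of $G$, are nonempty by hypothesis, and have the finite intersection property because $\mathcal{F}$ is directed; compactness of $G$ then furnishes a point $x \in \bigcap \mathcal{F}$ with $q(x) = \bar x$, giving $\bar x \in q\left(\bigcap \mathcal{F}\right)$. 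Combining this identity with the previous paragraph gives $q(\nub{\alpha}) = \nub{\alpha|^{G/N}}$, as required. One inclusion could alternatively be read off from Proposition~\ref{prop:nubergodic}, since a dense $\alpha$-orbit in $\nub{\alpha}$ maps under $q$ to a dense $\bar\alpha$-orbit in $q(\nub{\alpha})$, forcing $q(\nub{\alpha}) \leq \nub{\alpha|^{G/N}}$; but the intersection argument delivers the equality in one stroke.
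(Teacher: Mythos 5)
Your argument is correct, and it is worth pointing out that the paper itself supplies no proof of this proposition: it is presented as a restatement of Corollary~2.7(ii) of Jaworski's preprint [WJaw2], so your write-up is a genuine self-contained alternative rather than a parallel to an argument in the text. The reduction is sound: using the characterization of $\nub{\alpha}$ for compact $G$ as the intersection of the family $\mathcal F$ of open $\alpha$-stable subgroups, the correspondence $V\mapsto VN$ and $\bar W\mapsto {\mathfrak q}_N^{-1}(\bar W)$ correctly identifies $\bigcap_{V\in\mathcal F}{\mathfrak q}_N(V)$ with $\nub{\alpha|^{G/N}}$, and you have rightly isolated the only substantive point, namely passing ${\mathfrak q}_N$ through the possibly infinite intersection. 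Your finite-intersection-property argument handles this correctly: $\mathcal F$ is closed under finite intersections, each ${\mathfrak q}_N^{-1}(\bar x)\cap V$ is a closed (hence compact) nonempty subset of $G$, and compactness produces the required preimage point in $\nub{\alpha}$. By comparison, Jaworski's route goes through the transfinite decreasing family $(G_\rho)$ and the identification of the nub with the largest ergodic subgroup; your closing remark that the easy inclusion ${\mathfrak q}_N(\nub{\alpha})\leq\nub{\alpha|^{G/N}}$ follows from topological transitivity via Proposition~\ref{prop:nubergodic} is essentially that viewpoint, whereas the reverse inclusion is exactly where your compactness-and-directedness argument earns its keep. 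The one hypothesis you are using implicitly is that open subgroups of a compact group are closed, which is standard; with that noted, the proof is complete.
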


\begin{remark} {\it(a)\/} Example~6.4 from \cite{wi:further} gives a non-compact group $G$ and $\alpha$ in $\Aut(G)$ such that $\nub{\alpha}=\triv$ and yet $G$ has a (discrete) normal subgroup $N$ such that $G/N$ is compact and $\nub{\alpha|^{G/N}}  = G/N$. Hence Proposition~\ref{prop:ErgQuot} may therefore fail if $G$ is not assumed to be compact.\\
{\it(b)\/} While it is clear that ${\mathfrak q}_N(\bcg{\alpha}) \leq \bcg{\alpha|^{G/N}}$, Example~\ref{ex:con_2-ways_not_dense} shows that equality may fail. 
\end{remark}

The proofs of the next claims are straightforward.
\begin{proposition}
\label{prop:normal_subs}
Let $G$ be a totally disconnected, compact group and $\alpha\in \Aut(G)$. Then $\nub{\alpha}$, $\con{\alpha}$ and  $\bcg{\alpha}$ are normal subgroups of $G$.
\endproof
\end{proposition}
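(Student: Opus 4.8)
The plan is to treat the three subgroups separately, using throughout that a compact, totally disconnected group is profinite and so has a base, $\mathcal{N}$ say, of neighbourhoods of the identity consisting of open \emph{normal} subgroups. (This refines the base $\mathcal{B}(G)$ recorded in Section~\ref{sec:minimizing,scale}, since each compact open subgroup has finite index and hence an open normal core.)

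I would handle $\con{\alpha}$ first. Fix $x\in\con{\alpha}$ and $g\in G$; the aim is to show $\alpha^n(gxg^{-1})\to\ident_G$ as $n\to\infty$. Since $\alpha^n(gxg^{-1}) = \alpha^n(g)\,\alpha^n(x)\,\alpha^n(g)^{-1}$ and $\alpha^n(x)\to\ident_G$, I would, given $N\in\mathcal{N}$, choose $n_0$ with $\alpha^n(x)\in N$ for all $n\ge n_0$; normality of $N$ then forces $\alpha^n(g)\,\alpha^n(x)\,\alpha^n(g)^{-1}\in N$ for all such $n$, because $\alpha^n(g)\in G$. Letting $N$ range over $\mathcal{N}$ gives $\alpha^n(gxg^{-1})\to\ident_G$, i.e. $gxg^{-1}\in\con{\alpha}$. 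The homoclinic group then needs no extra work: the identical argument applied to $\alpha^{-1}$ shows $\con{\alpha^{-1}}$ is normal, and $\bcg{\alpha}=\con{\alpha}\cap\con{\alpha^{-1}}$ is an intersection of two normal subgroups.

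The substance is the nub. Here I would use the identification, valid for compact $G$ by Proposition~\ref{prop:ret=nub}, that $\nub{\alpha}$ is the intersection of all open $\alpha$-stable subgroups of $G$. The obstruction is that a conjugate $gVg^{-1}$ of an $\alpha$-stable subgroup need not itself be $\alpha$-stable, so one cannot argue conjugate by conjugate. Instead I would pass to normal cores: for each open $\alpha$-stable $V$ put $V^{\circ}:=\bigcap_{h\in G}hVh^{-1}$. Because $V$ has finite index in the compact group $G$, this is a finite intersection and is again open; it is normal by construction; and it is $\alpha$-stable, since $\alpha(V)=V$ together with the fact that $\alpha$ permutes $G$ gives $\alpha(V^{\circ})=\bigcap_{h}\alpha(h)\,V\,\alpha(h)^{-1}=\bigcap_{h'}h'Vh'^{-1}=V^{\circ}$. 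As $V^{\circ}\le V$ for every such $V$, and the open normal $\alpha$-stable subgroups form a subfamily of the open $\alpha$-stable ones, the two intersections coincide; hence $\nub{\alpha}$ equals the intersection of all open normal $\alpha$-stable subgroups, which is manifestly normal. (Alternatively one could invoke the density of $\con{\alpha}$ in $\ret{\alpha}=\nub{\alpha}$ in the compact case, noting that the closure of a normal subgroup is normal because conjugation is a homeomorphism.)

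I expect the only real point requiring care to be this last step, namely the verification that the normal core of an $\alpha$-stable subgroup is itself $\alpha$-stable; the statements for $\con{\alpha}$ and $\bcg{\alpha}$ fall out directly from the profinite neighbourhood base.
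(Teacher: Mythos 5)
Your argument is correct in all three cases: the profinite base of open normal subgroups handles $\con{\alpha}$ (and hence $\bcg{\alpha}$ as an intersection of the normal subgroups $\con{\alpha}$ and $\con{\alpha^{-1}}$), and the normal-core argument, together with Proposition~\ref{prop:ret=nub}, correctly identifies $\nub{\alpha}$ with an intersection of open \emph{normal} $\alpha$-stable subgroups. The paper simply declares these proofs straightforward and omits them, and your write-up is exactly the kind of routine verification being alluded to, so there is nothing to reconcile.
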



\subsection{Characterizations of $\nub{\alpha}$ and finding tidy subgroups}  The results of this section and elsewhere yield the following characterizations of $\nub{\alpha}$.  
  
\begin{theorem}
\label{thm:charnub}
Let $G$ be a totally disconnected, locally compact group and $\alpha$ be an automorphism of $G$. Then $\nub{\alpha}$ is the:
\begin{enumerate}
\item intersection of all $\alpha$-tidy subgroups of $G$; \label{thm:charnub1}
\item closure of $\bcon{\alpha}$; \label{thm:charnub2}
\item intersection $\bigcap\left\{ \overline{\rbcon{\alpha}{V}}\mid {V\in \mathcal{B}(G)}\right\}$;  \label{thm:charnub3}
\item largest compact, $\alpha$-stable subgroup of $G$ having no relatively open $\alpha$-stable subgroups; and \label{thm:charnub4}
\item largest compact, $\alpha$-stable subgroup of $G$ on which $\alpha$ acts ergodically. \label{thm:charnub5}
\end{enumerate}
 \end{theorem}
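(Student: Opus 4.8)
The plan is to assemble the five descriptions, most of which are already in hand, and to concentrate on the two genuinely new equalities (\ref{thm:charnub2}) and (\ref{thm:charnub3}). Items (\ref{thm:charnub1}), (\ref{thm:charnub4}) and (\ref{thm:charnub5}) need essentially no further argument: (\ref{thm:charnub1}) is the definition of $\nub{\alpha}$; (\ref{thm:charnub4}) is exactly Corollary~\ref{cor:nubmax}; and (\ref{thm:charnub5}) is Proposition~\ref{prop:nubergodic}, once one recalls (via \cite{Aoki}) that any group carrying an ergodic automorphism is compact, so that ``largest closed'' and ``largest compact'' coincide here. It thus remains to establish
$$
\nub{\alpha} = \overline{\bcon{\alpha}} = \bigcap\left\{\overline{\rbcon{\alpha}{V}}\mid V\in\mathcal{B}(G)\right\}.
$$

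Writing $C = \overline{\bcon{\alpha}}$ and $R = \bigcap\{\overline{\rbcon{\alpha}{V}}\mid V\in\mathcal{B}(G)\}$, I would first dispose of the two easy inclusions $C\leq R\leq\nub{\alpha}$. The inclusion $C\leq R$ is just (\ref{eq:bcon_and_rbcon}). For $R\leq\nub{\alpha}$ it suffices, since $\nub{\alpha}$ is the intersection of all tidy subgroups, to show $\rbcon{\alpha}{V_0}\leq V_0$ for each $V_0$ tidy for $\alpha$. Given $x\in\rbcon{\alpha}{V_0}$, choose $N\geq 0$ with $\alpha^n(x)\in V_0$ for all $n\geq N$; then the orbit $\{\alpha^n(x)\}_{n\in\mathbb{Z}}$ is bounded, being the union of the relatively compact set $\{\alpha^n(x)\}_{n\leq 0}$, the finite set $\{\alpha^n(x)\}_{0\leq n\leq N}$, and the set $\{\alpha^n(x)\}_{n\geq N}\subseteq V_0$. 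Putting $y=\alpha^N(x)\in V_0$, its orbit is bounded as well, so Lemma~\ref{lem:criterion}(\ref{lem:criterionii}) gives $\{\alpha^n(y)\}_{n\in\mathbb{Z}}\leq V_0$; in particular $x=\alpha^{-N}(y)\in V_0$. Taking closures and intersecting over all tidy $V_0$ then yields $R\leq\nub{\alpha}$.

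The decisive, and hardest, step is the reverse inclusion $\nub{\alpha}\leq C$, that is, the density of $\bcon{\alpha}$ in $\nub{\alpha}$; together with $C\leq R\leq\nub{\alpha}$ this collapses all three groups into one. Here I would invoke Proposition~\ref{prop:con_dense} (the new proof announced in the introduction, which refines \cite[Lemma~3.31(2)]{BaumW_Cont} beyond the metrizable case). The reduction is clean: restricting $\alpha$ to the compact group $\nub{\alpha}$ gives $\bcon{\alpha}=\con{\alpha}$ by Proposition~\ref{prop:ret=nub}, while $\alpha$ acts ergodically, equivalently topologically transitively, on $\nub{\alpha}$ by Proposition~\ref{prop:nubergodic}; so the claim reduces to the statement that the contraction group of a topologically transitive automorphism of a compact, totally disconnected group is dense. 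Proving that density is the genuine obstacle, and it is the algebraic core of Section~\ref{sec:contraction} --- the reason the present theorem must lean on a result established ``elsewhere''. Granting it, the chain $\nub{\alpha}\leq C\leq R\leq\nub{\alpha}$ closes, which proves (\ref{thm:charnub2}) and (\ref{thm:charnub3}) and completes the theorem.
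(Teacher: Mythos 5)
Your proof is correct, and for items (\ref{thm:charnub1}), (\ref{thm:charnub2}), (\ref{thm:charnub4}) and (\ref{thm:charnub5}) it follows the paper's route exactly: the same reduction of everything to the chain $\overline{\bcon{\alpha}}\leq\bigcap\{\overline{\rbcon{\alpha}{V}}\}\leq\nub{\alpha}\leq\overline{\bcon{\alpha}}$, with the density $\nub{\alpha}\leq\overline{\bcon{\alpha}}$ delegated, as in the paper, to Proposition~\ref{prop:con_dense} applied to the compact, topologically transitive pair $(\nub{\alpha},\alpha|_{\nub{\alpha}})$ (where $\con{}=\bcon{}$ by Proposition~\ref{prop:ret=nub}). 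Where you genuinely diverge is the inclusion $\bigcap\{\overline{\rbcon{\alpha}{V}}\}\leq\nub{\alpha}$. The paper argues globally: it sets $K:=\bigcap\{\overline{\rbcon{\alpha}{V}}\}$, quotes \cite[Lemma~2.2]{Wi:SimulTriang} for the compactness of $K$, notes $\alpha$-stability, and then appeals to the maximality statement of Corollary~\ref{cor:nubmax} to conclude $K\leq\nub{\alpha}$. You instead prove the sharper local statement $\rbcon{\alpha}{V_0}\leq V_0$ for each tidy $V_0$, by observing that the full orbit of any $x\in\rbcon{\alpha}{V_0}$ is relatively compact and then applying Lemma~\ref{lem:criterion}(\ref{lem:criterionii}) to $\alpha^N(x)\in V_0$; intersecting over tidy $V_0$ gives the result since these already cut out $\nub{\alpha}$. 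Your version is more elementary and self-contained: it avoids the external compactness lemma from \cite{Wi:SimulTriang} altogether, and it sidesteps the slightly delicate appeal to a ``maximal compact $\alpha$-stable subgroup'' (Corollary~\ref{cor:nubmax} characterizes the nub as maximal only among compact $\alpha$-stable subgroups with no proper relatively open $\alpha$-stable subgroups, so the paper's one-line deduction requires a little more unpacking than it admits). What the paper's formulation buys in exchange is the explicit identification of $K$ with the subgroup appearing in \cite[Proposition~2.1]{Wi:SimulTriang}, which is the point of item (\ref{thm:charnub3}) as a simplification of that earlier work.
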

\begin{proof}
The statement in~(\ref{thm:charnub1}) is the definition of $\nub{\alpha}$, while (\ref{thm:charnub4}) and (\ref{thm:charnub5}) are respectively Corollary~\ref{cor:nubmax} and Proposition~\ref{prop:nubergodic} above. 

Since, as seen in Equation~(\ref{eq:bcon_and_rbcon}),
$$
\overline{\bcon{\alpha}}\leq \bigcap\left\{ \overline{\rbcon{\alpha}{V}}\mid {V\in \mathcal{B}(G)}\right\},
$$
to prove (\ref{thm:charnub2}) and (\ref{thm:charnub3}), it suffices to show that 
$$
\nub{\alpha}\leq \overline{\bcon{\alpha}}\hbox{ and }\bigcap\left\{ \overline{\rbcon{\alpha}{V}}\mid {V\in \mathcal{B}(G)}\right\}\leq \nub{\alpha}.
$$
The first containment holds because, by  Corollary~\ref{cor:nubmax}, $\nub{\alpha}$ has no compact, open subgroups, and so, by Proposition~\ref{prop:con_dense}, its contraction group is dense. 

For the second, following Equation~(4) in \cite{Wi:SimulTriang}, denote 
$$
\bigcap\left\{ \overline{\rbcon{\alpha}{V}}\mid {V\in \mathcal{B}(G)}\right\} =: K.
$$
Then $K$ is compact by \cite[Lemma~2.2]{Wi:SimulTriang} and is $\alpha$-stable because $\mathcal{B}(G)$ is invariant under conjugation. Therefore $K$ is contained in the maximal compact $\alpha$-stable subgroup of $G$ which, by Corollary~\ref{cor:nubmax}, is equal to $\nub{\alpha}$.
\end{proof}

\begin{remark}
\label{rem:TBfirst}
The characterizations of $\nub{\alpha}$ given in Theorem~\ref{thm:charnub}(\ref{thm:charnub2})--(\ref{thm:charnub5}) admit the following `algorithm' for finding subgroups tidy for $\alpha$, that is an alternative to those given in \cite{Wi:structure,wi:further,Wi:SimulTriang}. Let $V$ be any compact, open subgroup of $G$ and suppose that $\nub{\alpha}$ has been identified by one of these characterizations. Then $\nub{\alpha}$ may be embedded in a compact, open subgroup of $G$ obtained from $V$ by either of the two following methods. 

The first method defines  $V'' \dfn \bigcap_{x\in \nub{\alpha}} xVx^{-1}$, which is normalized by $\nub{\alpha}$, and is open because there are only finitely many distinct conjugates $xVx^{-1}$ as $x$ ranges over~$\nub{\alpha}$. Hence $V''\nub{\alpha}$ is an open subgroup of $G$ containing $\nub{\alpha}$. The second method is given in \cite{wi:further}. Define $V''' \dfn \left\{v\in V \mid \nub{\alpha}v\subset V\nub{\alpha}\right\}$, 
which is an open subgroup of $G$ and $V'''\nub{\alpha} = \nub{\alpha}V'''$, see~\cite[Lemma~3.3]{wi:further}. Hence $V'''\nub{\alpha}$ is an open subgroup of $G$ containing $\nub{\alpha}$.

Either method produces a compact, open subgroup, $U$, of $G$ containing $\nub{\alpha}$, which therefore satisfies $\text{\bf TB}(\alpha)$, by Corollary~\ref{cor:nubimpliesT2}. There is then a positive integer~$n$ such that $\bigcap_{k=0}^n \alpha^k(U)$ is tidy for $\alpha$, by Corollary~\ref{cor:nubT2auto}. 
\end{remark}

\section{Automorphisms of compact groups}
\label{sec:contraction} 

As seen in the previous section, the intersection of all subgroups tidy for $\alpha$ is the compact subgroup $\nub{\alpha}$ and the restriction of $\alpha$ to $\nub{\alpha}$ is topologically transitive. The detailed study of topologically transitive pairs $(G,\alpha)$ begun in this section thus contributes to a more general study of automorphisms of locally compact groups. The approach taken is the same as that in ergodic theory, namely, to express $(G,\alpha)$ as an inverse limit of pairs having a finiteness property called \emph{expansiveness} in ergodic theory and here called \emph{finite depth}. 

The description of pairs $(G,\alpha)$ as inverse limits is similar to, and is in fact derived from, the theorem that each compact totally disconnected group is an inverse limit of finite groups. Furthering this similarity, each pair  $(G,\alpha)$ with finite depth has a \emph{depth}, which is a positive integer analogous to the order of a finite group. This similarity is behind a more complete description of such pairs in Section~\ref{sec:Jordan-Holder}, where it is shown that a version of the Jordan-H\"older Theorem holds. 

\subsection{The finite depth condition}
\label{sec:finite_depth}

\begin{definition}
\label{defn:separating}
The pair $(G,\alpha)$ of a compact group and automorphism has \emph{finite depth} if there is an open subgroup $V\leq G$ such that $\bigcap_{k\in{\ZZ}} \alpha^k(V) = \triv$. 
\end{definition}

\begin{remark}
\label{rem:separating}
{\it (a)\/} 
In topological dynamics, an automorphism satisfying the finite depth condition is called \emph{expansive}. That term is not adopted here however because it has misleading connotations. An automorphism of a compact group cannot `expand' sets: if an open set $\open{U}\subset G$ satisfies $\alpha(\open{U}) \supset \open{U}$, then $\alpha(\open{U}) = \open{U}$. The term `expansive' is also inconsistent \cite{BaumW_Cont}, where it is said that, if $V = V_+V_-$ is tidy for $\alpha$, then $\alpha$ `expands' $V_+$ and `shrinks' $V_-$ and that $s(\alpha)$ is the `expansion factor'. 
\\
 {\it (b)\/} If $V$ is an open subgroup of the compact group $G$ satisfying $\bigcap_{k\in{\ZZ}} \alpha^k(V) = \triv$, then $\bigcap_{x\in G} xVx^{-1}$ is a normal, open subgroup of $G$ satisfying the same condition. Hence the subgroup~$V$ in Definition~\ref{defn:separating} may be taken to be normal. This subgroup may also be taken to be tidy above for $\alpha$ by replacing it by $\bigcap_{k=0}^n \alpha^k(V)$ if necessary, see Lemma~\ref{lem:recalltidy}. These additional conditions will usually be assumed to be satisfied.
\end{remark}

The following description of compact pairs $(G,\alpha)$ may also be derived from \cite[Theorem~5.3]{WJaw2} and \cite[Theorem~3.6]{KitchSchmidt}\footnote{In the case of metrizable groups.}.
\begin{proposition}
\label{prop:Proj_Limit}
Let $G$ be a compact, totally disconnected group and $\alpha\in \Aut(G)$. Then there is an inverse system, $\left\{(G_i,\alpha_i), \varphi_{ij}, {\mathcal I}\right\}$ with each pair $(G_i,\alpha_i)$ having finite depth such that
\begin{equation}
\label{eq:Proj_Limit}
(G,\alpha) \cong \varprojlim (G_i,\alpha_i).
\end{equation}
\end{proposition}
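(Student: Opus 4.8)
The plan is to realize $(G,\alpha)$ as the inverse limit of its finite-depth quotients, mimicking the realization of a profinite group as the inverse limit of its finite quotients. The building blocks are the quotients of $G$ by the $\alpha$-orbits of small normal subgroups. Concretely, for each compact, open, \emph{normal} subgroup $V\leq G$ I would put
\[
N_V \dfn \bigcap_{k\in\ZZ}\alpha^k(V).
\]
Since $V$ is normal and $\alpha$ is an automorphism, each $\alpha^k(V)$ is open and normal in $G$, so $N_V$ is a closed normal subgroup; and $\alpha(N_V)=\bigcap_k\alpha^{k+1}(V)=N_V$, so $N_V$ is $\alpha$-stable. Hence $\alpha$ descends to an automorphism $\alpha_V$ of the compact group $G_V\dfn G/N_V$, giving a pair $(G_V,\alpha_V)$.

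First I would check that each $(G_V,\alpha_V)$ has finite depth. The image $\bar V\dfn V/N_V$ is an open subgroup of $G_V$ (as $N_V\leq V$ and $V$ is open), and because every $\alpha^k(V)$ contains $N_V$ the correspondence theorem gives $\alpha_V^{\,k}(\bar V)=\alpha^k(V)/N_V$ and preserves intersections, so
\[
\bigcap_{k\in\ZZ}\alpha_V^{\,k}(\bar V)=\Bigl(\bigcap_{k\in\ZZ}\alpha^k(V)\Bigr)\big/N_V=N_V/N_V=\triv.
\]
Thus $\bar V$ witnesses finite depth of $(G_V,\alpha_V)$ in the sense of Definition~\ref{defn:separating}. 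Note $G_V$ is a profinite quotient of $G$, hence compact and totally disconnected.

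Next I would assemble the inverse system. Let $\mathcal I$ be the set of compact, open, normal subgroups of $G$, directed by reverse inclusion; it is directed because $V\cap V'$ is again compact, open, and normal, with $N_{V\cap V'}=N_V\cap N_{V'}$. Whenever $V'\subseteq V$ we have $N_{V'}\subseteq N_V$, so the natural projection $\varphi_{VV'}\colon G_{V'}\to G_V$ is defined; it intertwines $\alpha_{V'}$ with $\alpha_V$ because all three maps are induced by $\alpha$, and the $\varphi_{VV'}$ are visibly compatible. This produces the inverse system $\{(G_V,\alpha_V),\varphi_{VV'},\mathcal I\}$, and the quotient maps $G\to G_V$ commute with the bonding maps, so they induce a morphism $\Phi\colon(G,\alpha)\to\varprojlim(G_V,\alpha_V)$ in the category of pairs of Section~\ref{sec:examples}.

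Finally I would verify that $\Phi$ is an isomorphism. Injectivity is immediate: in a compact, totally disconnected group the compact, open, normal subgroups form a base at the identity, so $\bigcap_V N_V\leq\bigcap_V V=\triv$, and any $g$ in the kernel of $\Phi$ lies in every $N_V$. Surjectivity follows from compactness: a coherent family $(g_V N_V)_V$ in the inverse limit determines cosets $g_V N_V\subseteq G$ which, by coherence and directedness of $\mathcal I$, have the finite intersection property, whence compactness of $G$ yields a point $g$ in their intersection with $\Phi(g)=(g_V N_V)_V$. As a continuous bijection of compact Hausdorff groups, $\Phi$ is a topological isomorphism, and it intertwines $\alpha$ with the limit automorphism by construction. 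The only genuinely substantive step is the finite-depth verification for the quotients; the remaining assembly is the standard identification of a profinite object with the inverse limit of its quotients, adapted to carry the automorphism along, so I do not expect it to present real difficulty.
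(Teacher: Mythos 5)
Your proposal is correct and follows exactly the paper's route: form $N_V=\bigcap_{k\in\mathbb{Z}}\alpha^k(V)$ for a base of open normal subgroups $V$, pass to the quotients $G/N_V$ (which have finite depth by construction), and identify $(G,\alpha)$ with the inverse limit; the paper leaves the final verification as ``routine'' while you spell it out. No discrepancies.
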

\begin{proof}
Since $G$ is compact, it has a base, $\left\{V_i\right\}_{i\in \mathcal I}$, of neighbourhoods of $\ident_G$ consisting of open, normal subgroups. For each $i\in{\mathcal I}$, define  $N_i = \bigcap_{k\in{\ZZ}} \alpha^k(V_i)$, $G_i = G/N_i$ and $\alpha_i = \alpha|^{G_i}$. Then $G_i$ is a compact group, $\alpha_i$ is a well-defined automorphism, and $(G_i,\alpha_i)$ has finite depth. Further define, for $i,j$ with $V_j \leq V_i$,  $\varphi_{i,j} : G_j\to G_i$ to be the quotient by $N_i/N_j$.  Then $\left\{G_i,\varphi_{i,j},{\mathcal I}\right\}$ is a projective system of compact groups and $\varphi_{i,j}\circ\alpha_j = \alpha_i\circ \varphi_{i,j}$. It is routine to check that $(G,\alpha)$ is the claimed inverse limit.
\end{proof}

The next result is a special case of \cite[Proposition~3.5]{KSchmidt}. It allows the conditions `$\nub{\alpha} = G$' and `$G$ has no proper open $\alpha$-stable subgroups' to be used interchangeably. The proof given here is based on Lemma~\ref{lem:recalltidy}. 
\begin{lemma}
\label{lem:alphastable_subgroup}
For any compact pair  $(G,\alpha)$ with finite depth, $\nub{\alpha}$ is an open subgroup of~$G$.
\end{lemma}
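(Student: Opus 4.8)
The plan is to show that the single open subgroup $V$ furnished by the finite depth hypothesis is already contained in $\nub{\alpha}$; since $V$ is open, this immediately forces $\nub{\alpha}$ to be open. By Remark~\ref{rem:separating}(b) I may assume from the outset that $V$ is open and tidy above for $\alpha$, so that $V=V_+V_-$ by condition $\text{\bf TA}(\alpha)$ of Theorem~\ref{thm:tidiness}, and that $V_0:=V_+\cap V_-=\bigcap_{k\in\ZZ}\alpha^k(V)=\triv$.

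The core of the argument is to locate $V_+$ and $V_-$ inside contraction groups. First I would record the two elementary facts $\alpha(V_-)\subseteq V_-$ and $\bigcap_{n\geq0}\alpha^n(V_-)=\bigcap_{j\in\ZZ}\alpha^j(V)=V_0=\triv$, the latter following from $\alpha^n(V_-)=\bigcap_{j\leq n}\alpha^j(V)$ and intersecting over all $n\geq0$. Thus $\{\alpha^n(V_-)\}_{n\geq0}$ is a decreasing chain of compact subgroups with trivial intersection. Here compactness of $G$ does the real work: for any identity neighbourhood $\mathscr U$, the nested compact sets $\alpha^n(V_-)\setminus\mathscr U$ have empty total intersection, so $\alpha^N(V_-)\subseteq\mathscr U$ for some $N$, and hence $\alpha^n(x)\in\mathscr U$ for all $n\geq N$ and all $x\in V_-$ simultaneously. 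This shows every $x\in V_-$ satisfies $\alpha^n(x)\to\ident_G$, that is, $V_-\subseteq\con{\alpha}$. The symmetric computation, or equivalently applying the same reasoning to $\alpha^{-1}$ (which has finite depth via the same $V$), gives $V_+\subseteq\con{\alpha^{-1}}$.

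To finish I would assemble these containments inside the nub. In the compact case $\con{\alpha}=\bcon{\alpha}$ (Proposition~\ref{prop:ret=nub}) and $\nub{\alpha}=\overline{\bcon{\alpha}}$ (Theorem~\ref{thm:charnub}(\ref{thm:charnub2})), so $\con{\alpha}\subseteq\nub{\alpha}$; likewise $\con{\alpha^{-1}}\subseteq\nub{\alpha^{-1}}$. Since being $\alpha$-stable is the same as being $\alpha^{-1}$-stable, the characterization in Corollary~\ref{cor:nubmax} is symmetric in $\alpha$ and $\alpha^{-1}$, whence $\nub{\alpha^{-1}}=\nub{\alpha}$. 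Therefore both $V_-\subseteq\nub{\alpha}$ and $V_+\subseteq\nub{\alpha}$, and as $\nub{\alpha}$ is a group, $V=V_+V_-\subseteq\nub{\alpha}$. Being open, $V$ witnesses that $\nub{\alpha}$ is open.

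The main obstacle, and really the only nontrivial point, is the passage from ``$\alpha^n(V_-)$ shrinks to $\triv$'' to ``$V_-\subseteq\con{\alpha}$''. This is where compactness of $G$ is indispensable: the analogous statement fails for the non-compact shift on $F^{\ZZ<}$, so the uniform shrinking into $\mathscr U$ cannot be replaced by a purely algebraic argument. I would also take care to get the decreasing direction right, since it is the backward-stable group $V_-$ that lies in $\con{\alpha}$ and the forward-stable group $V_+$ that lies in $\con{\alpha^{-1}}$, and not the reverse.
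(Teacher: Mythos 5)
Your proof is correct and takes essentially the same route as the paper: the paper likewise shows that every $x\in V_+$ satisfies $\alpha^{-n}(x)\to\ident_G$ (and dually for $V_-$), using exactly the nested compact intersections $\bigcap_{k=-n}^{\infty}\alpha^k(V)$ decreasing to $\triv$, and concludes that $V_+$ and $V_-$ lie in every open $\alpha$-stable subgroup, whence $V\leq\nub{\alpha}$. The only difference is the last step: instead of routing through $\bcon{\alpha}$ and Theorem~\ref{thm:charnub}(\ref{thm:charnub2}) (which is valid but heavier, and whose proof in the paper cites the later Proposition~\ref{prop:con_dense}, creating a forward reference though not an actual circularity since only the easy containment $\overline{\bcon{\alpha}}\leq\nub{\alpha}$ is needed), the paper simply observes that $\alpha^{\mp n}(x)$ eventually enters any open $\alpha$-stable $W$ and pulls back by stability to get $x\in W$.
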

\begin{proof}
Choose an open subgroup, $V$ of $G$, such that $\bigcap_{k\in{\mathbb Z}} \alpha^k(V) = \triv$ and $V= V_+V_-$, see Remark~\ref{rem:separating}. Let $W$ be an open $\alpha$-stable subgroup of $G$. It will be shown that both $V_+$ and $V_-$ are subgroups of $W$, whence $\nub{\alpha}\geq V$. 

Consider first $x\in V_+$. Since $\alpha^{-n}(x)\in \bigcap_{k=-n}^\infty \alpha^k(V)$ for each $n$,  $\alpha^{-n}(x)\to\ident$ as $n\to\infty$. Hence $\alpha^{-n}(x)$ belongs to $W$ for $n$ sufficiently large. 
Since $W$ is $\alpha$-stable, $x\in W$ and so $V_+\leq W$. That $V_-\leq W$ may be shown in a similar way.
\end{proof}

The next proposition is the basis for making the notion of `depth' precise. 
\begin{proposition}
\label{prop:Possame}
Suppose that $(G,\alpha)$ is infinite but has finite depth. Let $V$ be an open subgroup of $G$ with $\bigcap_{k\in{\ZZ}} \alpha^k(V) = \triv$ and $V = V_+V_-$. Then $[\alpha(V_+):V_+]$ is strictly greater than~$1$ and is independent of the choice of $V$ with these properties. 
\end{proposition}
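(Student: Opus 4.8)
The plan is to treat the two assertions separately: first that $r_V := [\alpha(V_+):V_+] > 1$, and then its independence of $V$, which I would obtain from a short index-counting identity after reducing to nested subgroups. Note throughout that $r_V \geq 1$ is automatic from $V_+ \leq \alpha(V_+)$ (since $V_+ = V \cap \alpha(V_+)$) and that $r_V$ is finite by Lemma~\ref{lem:recalltidy}(\ref{lem:recalltidyii}).

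For the strict inequality I would argue by contradiction. If $\alpha(V_+) = V_+$, then $V_+$ is an $\alpha$-stable subgroup contained in $V$, so $V_+ = \alpha^n(V_+) \subseteq \alpha^n(V)$ for every $n \in \ZZ$ and hence $V_+ \subseteq \bigcap_{n\in\ZZ}\alpha^n(V) = \triv$ by the separating hypothesis. Then $V = V_+V_- = V_-$, so $\alpha(V) \subseteq V$; since $V$ is open in the compact group $G$ it has finite index, which $\alpha$ preserves, forcing $\alpha(V) = V$. But then $V$ itself is $\alpha$-stable and contained in $V$, so the separating condition gives $V = \triv$, making $G$ discrete and hence finite, contradicting that $(G,\alpha)$ is infinite.

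For independence, the core is the following cancellation. Suppose $W \leq V$ with both subgroups separating and tidy above, and put $c := [V_+ : W_+]$. Then $W_+ \leq V_+ \leq \alpha(V_+)$ and $W_+ \leq \alpha(W_+) \leq \alpha(V_+)$ are two subgroup chains with the same top and bottom, so computing $[\alpha(V_+):W_+]$ along each, and using that $\alpha$ preserves indices (whence $[\alpha(V_+):\alpha(W_+)] = [V_+:W_+] = c$), gives $r_V \cdot c = [\alpha(V_+):W_+] = r_W \cdot c$; cancelling $c$ yields $r_V = r_W$. To reach the nested case from arbitrary separating, tidy-above $V$ and $W$, I would set $U := \bigcap_{k=0}^n \alpha^k(V \cap W)$, which is tidy above for a suitable $n$ by Lemma~\ref{lem:recalltidy}(\ref{lem:recalltidyi}), is separating because $U \subseteq V$, and satisfies $U \leq V$ and $U \leq W$; applying the nested case twice gives $r_V = r_U = r_W$.

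The hard part, and the only place the separating hypothesis does substantial work in the second assertion, is justifying that $c = [V_+:W_+]$ is finite so that it may be cancelled. I would prove this by showing that each $x \in V_+$ lies in $\alpha^k(W)$ for all large $k$, uniformly in $x$. One checks that $\bigcap_{k\geq0}\alpha^{-k}(V_+) = \bigcap_{n\in\ZZ}\alpha^n(V) = \triv$ with the compact subgroups $\alpha^{-k}(V_+)$ decreasing, so that there is a $K$ with $\alpha^{-K}(V_+) \subseteq W$; then $\alpha^{-k}(x) \in \alpha^{-k}(V_+)\subseteq W$ for every $x \in V_+$ and $k \geq K$, that is, $V_+ \subseteq \alpha^k(W)$ for all $k \geq K$. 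Consequently $W_+ = V_+ \cap \bigcap_{k=0}^{K-1}\alpha^k(W)$, a finite intersection of subgroups each of index at most $[V:W]$ in $V_+$, giving $[V_+:W_+] \leq [V:W]^K < \infty$. With finiteness established, the cancellation closes the argument.
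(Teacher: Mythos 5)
Your proof is correct, and its engine is the same as the paper's: a telescoping index computation in which $[\,\cdot\,:\,\cdot\,]$ is evaluated along two chains with common endpoints and the common factor is cancelled using the fact that $\alpha$ preserves indices. The paper runs this computation directly on the chain $\alpha(V_+)\geq V_+\geq \alpha^{-n}(V'_+)\geq\alpha^{-n-1}(V'_+)$ after shrinking $V'$ into $V$, whereas you first reduce to nested tidy-above subgroups $W\leq V$ and compare $W_+$ with $V_+$; these are organisational variants of one argument. Two local differences are worth recording. First, for $[\alpha(V_+):V_+]>1$ the paper invokes Haar measure to get $[\alpha(V_+):V_+]=[V_-:\alpha(V_-)]$ and concludes that $V_+$, $V_-$ and hence $V$ are $\alpha$-stable; you instead deduce $V_+=\triv$ from the separating condition and then upgrade $\alpha(V)\subseteq V$ to $\alpha(V)=V$ via preservation of the finite index $[G:V]$ --- equally valid and measure-free. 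Second, you explicitly verify that the cancelled index $c=[V_+:W_+]$ is finite, via the observation that the decreasing compact subgroups $\alpha^{-k}(V_+)$ intersect in $\bigcap_{n\in\ZZ}\alpha^n(V)=\triv$ and hence eventually enter $W$; the paper's cancellation of $[\alpha(V_+):\alpha^{-n}(V'_+)]$ tacitly requires the analogous finiteness, which it does not spell out, so your argument fills a small gap left implicit there.
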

\begin{proof}
Since $G$ is compact, $m(\alpha(V)) = m(V)$, where $m$ is Haar measure, and so
$$
[\alpha(V_+) : V_+ ]  = [V_- : \alpha(V_-)] . 
$$
Hence, if $[\alpha(V_+) : V_+] = 1$, then  $V_+$ and $V_-$ are both $\alpha$-stable and, consequently, so is $V$. This can only occur if $V$ is the trivial subgroup and $G$ is finite, a contradiction.

Let $V'$ be a second open subgroup, with $\bigcap_{k\in{\ZZ}} \alpha^k(V') = \triv$ and $V' = V'_+V'_-$. Then there is an $n\in{\NN}$ such that $\bigcap_{k=-n}^n \alpha^k(V') \leq V$. In particular, $\alpha^{-n}(V'_+)\leq V_+$. Consideration of the inclusions $\alpha(V_+) \geq V_+ \geq \alpha^{-n}(V'_+) \geq \alpha^{-n-1}(V'_+)$ yields that
\begin{eqnarray*}
[\alpha(V_+) : \alpha^{-n-1}(V'_+)] &=& [\alpha(V_+) : V_+][ V_+ : \alpha^{-n-1}(V'_+)] \\
&=& [\alpha(V_+) : \alpha^{-n}(V'_+)][ \alpha^{-n}(V'_+) : \alpha^{-n-1}(V'_+)]. 
\end{eqnarray*}
Since $\alpha$ is an automorphism, $[ V_+ : \alpha^{-n-1}(V'_+)]  =  [\alpha(V_+) : \alpha^{-n}(V'_+)]$ and so 
$$
[\alpha(V_+) : V_+] = [ \alpha^{-n}(V'_+) : \alpha^{-n-1}(V'_+)] = [\alpha(V'_+) : V'_+].
$$
\end{proof}

\begin{definition}
\label{defn:depth}
The \emph{depth} of $(G,\alpha)$ with finite depth is the index $[\alpha(V_+): V_+]$, where $V$ is any open subgroup of $G$ with $\bigcap_{k\in{\ZZ}} \alpha^k(V) = \triv$ and $V = V_+V_-$. 
\end{definition}

The entropy of the action of $\alpha$ on $G$ is the logarithm of the depth, \cite[Theorem~2]{Kitchens}. 
The next result, a consequence of Lemma~\ref{lem:recalltidy}, will be used in Section~\ref{sec:Jordan-Holder} to describe the algebraic structure of pairs $(G,\alpha)$ with finite depth. 
\begin{lemma}
\label{lem:capfinite}
Suppose that $(G,\alpha)$ has finite depth and let $V$ be an open, normal subgroup of $G$ such that $\bigcap_{k\in\ZZ} \alpha^k(V) = \triv$. Then $\alpha^i(V_+)\cap \alpha^j(V_-)$ is a finite normal subgroup of $G$ for every $i,j\in\ZZ$. 
\end{lemma}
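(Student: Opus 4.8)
The plan is to prove the two assertions—normality and finiteness—separately, with finiteness the only substantive point. Throughout I would work directly with the defining expressions $V_+=\bigcap_{k\geq0}\alpha^k(V)$ and $V_-=\bigcap_{k\geq0}\alpha^{-k}(V)$, and I would not use tidiness above or the decomposition $V=V_+V_-$.

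For normality, I would first observe that, since $V\triangleleft G$ and $\alpha$ is an automorphism, each $\alpha^n(V)$ is again normal in $G$: for $g\in G$ one has $g\,\alpha^n(V)\,g^{-1}=\alpha^n\!\big(\alpha^{-n}(g)\,V\,\alpha^{-n}(g)^{-1}\big)=\alpha^n(V)$. Hence $V_+$ and $V_-$, being intersections of normal subgroups, are normal, and so are their images $\alpha^i(V_+)$ and $\alpha^j(V_-)$ under automorphisms; the intersection of two normal subgroups is normal, which settles normality of $\alpha^i(V_+)\cap\alpha^j(V_-)$.

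For finiteness, the first move is to strip off $\alpha^i$: since $\alpha^i$ is a bijection commuting with intersection, $\alpha^i(V_+)\cap\alpha^j(V_-)=\alpha^i\big(V_+\cap\alpha^{m}(V_-)\big)$ with $m=j-i$, so it suffices to show $V_+\cap\alpha^m(V_-)$ is finite for every $m\in\ZZ$. Writing $\alpha^m(V_-)=\bigcap_{l\leq m}\alpha^l(V)$, I would record
$$
V_+\cap\alpha^m(V_-)=\bigcap_{n\geq0}\alpha^n(V)\ \cap\ \bigcap_{l\leq m}\alpha^l(V)=\bigcap_{n\in S}\alpha^n(V),
$$
where $S=\{n\geq0\}\cup\{n\leq m\}$ has \emph{finite} complement $C=\{n\in\ZZ: m<n<0\}$ (which is empty when $m\geq0$). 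The crux of the argument is then simply the bookkeeping observation that adjoining the finitely many missing factors restores the full two-sided intersection:
$$
\big(V_+\cap\alpha^m(V_-)\big)\cap H=\bigcap_{n\in\ZZ}\alpha^n(V)=\triv,\qquad H:=\bigcap_{n\in C}\alpha^n(V),
$$
the last equality being exactly the finite-depth hypothesis.

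To finish, I would note that $H$ is a finite intersection of open (hence finite-index) normal subgroups of the compact group $G$, so $H$ is open, normal, and of finite index. Since $\big(V_+\cap\alpha^m(V_-)\big)\cap H=\triv$, the restriction of the quotient map $G\to G/H$ to $V_+\cap\alpha^m(V_-)$ is injective; as $[G:H]<\infty$ this forces $V_+\cap\alpha^m(V_-)$ to be finite, and transporting back through the bijection $\alpha^i$ gives finiteness of $\alpha^i(V_+)\cap\alpha^j(V_-)$. I do not expect a genuine obstacle here: the one idea that carries the proof is recognizing that $V_+\cap\alpha^m(V_-)$ differs from the trivial two-sided intersection only in the finitely many indices strictly between $m$ and $0$, and the required finite-index facts come merely from $V$ (hence each $\alpha^n(V)$) being open in the compact group $G$. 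Everything else is routine.
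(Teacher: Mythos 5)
Your argument is correct, but it takes a different route from the paper's. The paper first reduces to the case $j=0$, $i>0$ (the other cases giving a subgroup of $V_+\cap V_-=\triv$), and then observes that the quotient map $\alpha^i(V_+)\to\alpha^i(V_+)/V_+$ is injective on $\alpha^i(V_+)\cap V_-$ because $V_+\cap V_-=\triv$; finiteness then comes from Lemma~\ref{lem:recalltidy}(\ref{lem:recalltidyii}), the structure-theoretic fact that $V_+$ has finite index in $\alpha(V_+)$. You instead exploit the bookkeeping identity $\alpha^i(V_+)\cap\alpha^j(V_-)=\alpha^i\bigl(\bigcap_{n\in S}\alpha^n(V)\bigr)$ with $S$ cofinite in $\ZZ$, intersect with the open finite-index subgroup $H=\bigcap_{n\in C}\alpha^n(V)$ indexed by the finite complement, and use the finite-depth hypothesis to conclude that the intersection with $H$ is trivial, so the group injects into the finite coset space $G/H$. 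Your proof is more elementary: it needs only that open subgroups of the compact group $G$ have finite index, and entirely avoids the imported index result from the tidying machinery. What the paper's version buys in exchange is a sharper count, namely $|\alpha^i(V_+)\cap V_-|\leq[\alpha^i(V_+):V_+]$, which ties the size of these finite subgroups to the depth and is the quantity actually used later (e.g.\ in Proposition~\ref{prop:opennormal} and Corollary~\ref{cor:depth}); your bound $[G:H]$ is cruder but entirely sufficient for the statement as given. Both treatments of normality are the same routine observation.
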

\begin{proof}
Since $\alpha^{-1}(V_+)\leq V_+$ and $\alpha(V_-)\leq V_-$, and since $V_+\cap V_- = \triv$, it suffices to consider the case when $j=0$ and $i>0$. For this, note that the quotient map $\alpha^i(V_+) \to \alpha^i(V_+)/V_+$ is injective on $\alpha^i(V_+)\cap V_-$ because $V_+\cap V_- = \triv$. The claim then follows because $\alpha^i(V_+)/V_+$ is finite, see Lemma~\ref{lem:recalltidy}.
\end{proof}

\subsection{Density of the contraction group}
\label{sec:condensity}

It is shown in \cite[Theorem~3.26]{BaumW_Cont} that, if $G$ is a metrizable, locally compact group and $\alpha\in\Aut{G}$, then $\con{\alpha}$ is dense in $\ret{\alpha}$, and it was shown in \cite{WJaw} that the metrizability restriction could be dispensed with. An alternative proof of the special case when $G$ is compact is given below. Rather than the compactness argument given in \cite{BaumW_Cont}, the proof uses a more constructive algebraic argument to treat the finite depth case followed by `approximation by finite depth' using Proposition~\ref{prop:Proj_Limit}. 

\begin{proposition}
\label{prop:prodeq}
Suppose that the pair $(G,\alpha)$ has finite depth and is topologically transitive. Let $V$ be an open, normal subgroup of $G$ such that $\bigcap_{k\in\mathbb{Z}} \alpha^k(V) = \triv$ and $V = V_+V_-$. Then 
\begin{enumerate}
\item there is $k\in {\mathbb N}$ such that $\alpha^k(V_+)V_- = G$; and
\label{cor:prodeqi}
\item $\bigcup_{n\in{\NN}} \alpha^n(V_+)\cap \alpha^{-n}(V_-)$ is dense in $G$.
\label{cor:prodeqii}
\end{enumerate}
\end{proposition}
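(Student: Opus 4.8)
The plan is first to reinterpret the set appearing in part~(\ref{cor:prodeqii}). Since $x\in\alpha^n(V_+)$ exactly when $\alpha^m(x)\in V$ for all $m\leq -n$, and $x\in\alpha^{-n}(V_-)$ exactly when $\alpha^m(x)\in V$ for all $m\geq n$, the intersection $\alpha^n(V_+)\cap\alpha^{-n}(V_-)$ consists of those $x$ with $\alpha^m(x)\in V$ whenever $|m|\geq n$. Taking the union over $n$ therefore gives
$$
\bigcup_{n\in\NN}\bigl(\alpha^n(V_+)\cap\alpha^{-n}(V_-)\bigr)=\mathscr{L}\dfn\left\{x\in G\mid \alpha^m(x)\in V\hbox{ for all but finitely many }m\right\},
$$
so part~(\ref{cor:prodeqii}) asserts that $\mathscr{L}$ is dense. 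Here $\mathscr{L}$ is an $\alpha$-stable subgroup that contains the homoclinic group $\bcg{\alpha}$ but is generally strictly larger; this margin is essential, since $\bcg{\alpha}$ itself need not be dense.

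For part~(\ref{cor:prodeqi}) I would first note that each $\alpha^m(V)$ is normal in $G$ because $V$ is, so $V_+$ and $V_-$ are normal and hence $\alpha^k(V_+)V_-$ is a subgroup; it is open since it contains $V=V_+V_-$. These subgroups increase with $k$, and their union $H\dfn V_{++}V_-$ is an open subgroup with $\alpha(H)\subseteq H$ (using $\alpha(V_{++})=V_{++}$ and $\alpha(V_-)\subseteq V_-$); as $\alpha$ is an automorphism and $H$ has finite index, in fact $\alpha(H)=H$. Topological transitivity forbids proper open $\alpha$-stable subgroups — a dense orbit can neither be confined to, nor avoid, a nonempty clopen $\alpha$-stable set — so $H=G$. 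Since $G$ is compact and equals the increasing union of the open subgroups $\alpha^k(V_+)V_-$, it coincides with one of them, giving the required $k$.

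For part~(\ref{cor:prodeqii}) I would aim to show $V_+\subseteq\overline{\mathscr{L}}$ and, symmetrically, $V_-\subseteq\overline{\mathscr{L}}$; then $V=V_+V_-\subseteq\overline{\mathscr{L}}$, so $\overline{\mathscr{L}}$ is an open, $\alpha$-stable subgroup and hence equals $G$ by the same transitivity argument. To obtain $V_+\subseteq\overline{\mathscr{L}}$, apply $\alpha^{-n}$ to the identity $\alpha^k(V_+)V_-=G$ from part~(\ref{cor:prodeqi}) to get $\alpha^{k-n}(V_+)\,\alpha^{-n}(V_-)=G$, and intersect with $V_+$. For $n\geq k$ one has $\alpha^{k-n}(V_+)\leq V_+$, so the Dedekind modular law yields
$$
V_+=\alpha^{k-n}(V_+)\bigl(V_+\cap\alpha^{-n}(V_-)\bigr).
$$
The second factor lies in $\alpha^n(V_+)\cap\alpha^{-n}(V_-)\subseteq\mathscr{L}$, while the first factor $\alpha^{k-n}(V_+)$ decreases, as $n\to\infty$, to $\bigcap_{j\geq0}\alpha^{-j}(V_+)=\bigcap_{m\in\ZZ}\alpha^m(V)=\triv$. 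Thus each $x\in V_+$ factors as $x=a_ny_n$ with $y_n\in\mathscr{L}$ and $a_n\to\ident$ (compactness forces the decreasing closed sets $\alpha^{k-n}(V_+)$ into any neighbourhood of $\ident$), so $y_n\to x$ and $x\in\overline{\mathscr{L}}$. The statement for $V_-$ follows by running the argument for $\alpha^{-1}$, whose $+$/$-$ subgroups are $V_-$/$V_+$ and for which part~(\ref{cor:prodeqi}) supplies $\alpha^{-k}(V_-)V_+=G$.

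The main obstacle is precisely part~(\ref{cor:prodeqii}): because $\overline{\mathscr{L}}$ is a priori only a closed $\alpha$-stable subgroup, transitivity alone does not force it to be $G$, as proper closed $\alpha$-stable subgroups (subshifts) abound. The crux is therefore to upgrade $\overline{\mathscr{L}}$ to an \emph{open} subgroup, and this is exactly what the modular-law factorization delivers by locating $V_\pm$, and hence the open subgroup $V$, inside $\overline{\mathscr{L}}$. The remaining points — normality of $V_\pm$, legitimacy of the modular-law step, and the shrinking of $\alpha^{k-n}(V_+)$ to $\triv$ — are routine verifications.
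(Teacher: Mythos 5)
Your proof is correct and follows essentially the same route as the paper's: part~(\ref{cor:prodeqi}) via the eventually constant increasing chain of open normal subgroups $\alpha^k(V_+)V_-$ together with the absence of proper open $\alpha$-stable subgroups, and part~(\ref{cor:prodeqii}) by approximating elements of $V_\pm$ by elements of $\bigcup_{n}\alpha^n(V_+)\cap\alpha^{-n}(V_-)$ using a complementary factor that shrinks to the identity (the paper translates the identity $G=\alpha^k(V_+)V_-$ by $\alpha^l$ and absorbs the shrinking factor $\alpha^l(V_-)$ directly, where you phrase the same manipulation via the modular law and finish with the open-subgroup transitivity argument rather than with $G=\alpha^k(V_+)V_-$). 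One aside is mistaken but immaterial: in the finite-depth setting your $\mathscr{L}$ in fact \emph{equals} $\bcg{\alpha}$, since each $\alpha^n(V_+)\cap\alpha^{-n}(V_-)$ is contained in the homoclinic group (this containment is exactly how the paper deduces Corollary~\ref{cor:bcon_dense}), so there is no extra ``margin''; the failure of density of $\bcg{\alpha}$ occurs only for pairs without finite depth.
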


\begin{proof}
(\ref{cor:prodeqi}) The hypothesis that $V$ is normal implies that $V_+$ and $V_-$ are normal subgroups of $G$ as well. Hence $\left\{ \alpha^k(V_+)V_-\right\}_{k\in\mathbb{N}}$ is an increasing sequence of subgroups of $G$ that are open because they contain $V$. Since open subgroups have finite index in $G$, this increasing sequence is eventually constant and equal to $\alpha^k(V_+)V_-$ for some $k$. Then 
$$
\alpha\left(\alpha^k(V_+)V_-\right) \subset \alpha^{k+1}(V_+)V_- = \alpha^k(V_+)V_-
$$
and so, since $G$ is compact and $\alpha$ therefore preserves Haar measure, $\alpha^k(V_+)V_-$ is $\alpha$-stable. The hypothesis that $G$ has no proper open, $\alpha$-stable subgroups then implies that $\alpha^k(V_+)V_- = G$. 

(\ref{cor:prodeqii}) It will first be shown that $\bigcup_{n\in \mathbb{N}} \left( \alpha^n(V_+)\cap V_-\right)$ is dense in $V_-$. For this, consider $x\in V_-$. From part~(\ref{cor:prodeqi}), we have that $x\in \alpha^{k+l}(V_+)\alpha^l(V_-)$ for every $l\in \mathbb{Z}$ whence, for each $l>0$, there is $y\in \alpha^l(V_-)$ such that $xy\in \alpha^{k+l}(V_+)\cap V_-$. Since $\left\{\alpha^l(V_-)\right\}_{l\geq 0}$ is a sequence of subgroups that decreases to the trivial subgroup, it follows that $\bigcup_{n\in \mathbb{N}} \left( \alpha^n(V_+)\cap V_-\right)$ is dense in $V_-$ as claimed. It may be shown by a similar argument that $\bigcup_{n\in \mathbb{N}} \left( \alpha^k(V_+)\cap \alpha^{-n}(V_-)\right)$ is dense in $\alpha^k(V_+)$. This suffices to complete the proof because $G = \alpha^k(V_+)V_-$. 
\end{proof}

Since $\alpha^n(V_+)\cap \alpha^{-n}(V_-)$ is contained in $\bcg{\alpha}$, we have the following. 
\begin{corollary}
\label{cor:bcon_dense}
Let $(G,\alpha)$ be a compact, topologically transitive pair with finite depth. Then $\bcg{\alpha}$ is dense in~$G$.
\endproof
\end{corollary}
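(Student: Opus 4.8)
The plan is to deduce the corollary directly from Proposition~\ref{prop:prodeq}(\ref{cor:prodeqii}) together with the observation recorded immediately above it, so that the only genuine work is to verify the containment $\alpha^n(V_+)\cap\alpha^{-n}(V_-)\subseteq\bcg{\alpha}$. First I would use Remark~\ref{rem:separating} to fix an open, normal subgroup $V$ with $\bigcap_{k\in\ZZ}\alpha^k(V)=\triv$ and $V=V_+V_-$. Since $(G,\alpha)$ is topologically transitive and has finite depth, Proposition~\ref{prop:prodeq}(\ref{cor:prodeqii}) then says that $\bigcup_{n\in\NN}\bigl(\alpha^n(V_+)\cap\alpha^{-n}(V_-)\bigr)$ is dense in $G$. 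Hence it suffices to show that this union lies inside $\bcg{\alpha}=\con{\alpha}\cap\con{\alpha^{-1}}$: if so, then $\bcg{\alpha}$ contains a dense subset of $G$ and is therefore itself dense.

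For the containment the key point is that the finite depth hypothesis forces the filtrations $\{\alpha^l(V_-)\}_{l\geq0}$ and $\{\alpha^{-l}(V_+)\}_{l\geq0}$ to decrease to $\triv$, exactly as already used inside the proof of Proposition~\ref{prop:prodeq}. Given $x\in\alpha^n(V_+)\cap\alpha^{-n}(V_-)$, I would treat the two sides separately. From $\alpha^n(x)\in V_-$ one gets $\alpha^{n+m}(x)\in\alpha^m(V_-)$ for every $m\geq0$, and since $\alpha^m(V_-)$ shrinks to the trivial subgroup this yields $\alpha^j(x)\to\ident_G$ as $j\to\infty$, that is, $x\in\con{\alpha}$. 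Symmetrically, from $\alpha^{-n}(x)\in V_+$ one gets $\alpha^{-n-m}(x)\in\alpha^{-m}(V_+)$ for every $m\geq0$, and $\alpha^{-m}(V_+)\to\triv$ gives $\alpha^j(x)\to\ident_G$ as $j\to-\infty$, that is, $x\in\con{\alpha^{-1}}$. Combining the two, $x\in\bcg{\alpha}$, which is the required inclusion.

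There is essentially no obstacle once Proposition~\ref{prop:prodeq} is in hand; the corollary is formal. The only thing requiring care is the bookkeeping identifying $\alpha^l(V_-)=\bigcap_{j\leq l}\alpha^j(V)$ and $\alpha^{-l}(V_+)=\bigcap_{j\geq -l}\alpha^j(V)$, and noting that as $l\to\infty$ both of these descending families of compact subgroups intersect down to $\bigcap_{j\in\ZZ}\alpha^j(V)=\triv$, so that by compactness each is eventually contained in any prescribed open neighbourhood of $\ident_G$. This is precisely the place where the finite depth condition enters: without it these filtrations need not collapse, and indeed the homoclinic group $\bcg{\alpha}$ can then fail to be dense in $G$, which is the very phenomenon flagged for the general case elsewhere in the paper.
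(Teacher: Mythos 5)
Your argument is correct and is essentially the paper's own proof: the paper derives the corollary from Proposition~\ref{prop:prodeq}(\ref{cor:prodeqii}) by simply noting that each $\alpha^n(V_+)\cap\alpha^{-n}(V_-)$ lies in $\bcg{\alpha}$, which is exactly the containment you verify (correctly) via the collapsing filtrations $\alpha^l(V_-)=\bigcap_{j\leq l}\alpha^j(V)$ and $\alpha^{-l}(V_+)=\bigcap_{j\geq -l}\alpha^j(V)$. Your write-up just makes explicit the step the paper leaves to the reader.
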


The contraction group is dense even when $(G,\alpha)$ does not have finite depth. 
\begin{proposition}
\label{prop:con_dense}
Let $(G,\alpha)$ be a compact, topologically transitive pair. Then 
\begin{enumerate}
\item $\con{\alpha}$ is dense in $G$, and \label{prop:con_densei}
\item $\overline{\bcg{\alpha}} \supset [G,G]$.
\label{prop:con_denseii}
\end{enumerate}
\end{proposition}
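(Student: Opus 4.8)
The plan is to prove part~(\ref{prop:con_densei}) by reducing to the finite-depth case through Proposition~\ref{prop:Proj_Limit} and then to deduce part~(\ref{prop:con_denseii}) from it by a purely algebraic commutator argument. Throughout, since $(G,\alpha)$ is compact and topologically transitive, Proposition~\ref{prop:nubergodic} gives $\nub{\alpha}=G$.

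For~(\ref{prop:con_densei}), write $(G,\alpha)=\varprojlim(G_i,\alpha_i)$ as in Proposition~\ref{prop:Proj_Limit}, with $G_i=G/N_i$, $N_i=\bigcap_k\alpha^k(V_i)$ and quotient maps $\mathfrak q_i$. Each $(G_i,\alpha_i)$ has finite depth and is again topologically transitive, because $\nub{\alpha_i}=\mathfrak q_i(\nub{\alpha})=G_i$ by Proposition~\ref{prop:ErgQuot}. Hence Corollary~\ref{cor:bcon_dense} applies to each factor, so $\bcg{\alpha_i}$, and a fortiori $\con{\alpha_i}$, is dense in $G_i$. Since density in an inverse limit of compact groups is equivalent to density of every projection, it suffices to prove that $\mathfrak q_i(\con{\alpha})$ is dense in $G_i$ for each $i$, and for this it is enough to establish $\bcg{\alpha_i}\subseteq\mathfrak q_i(\con{\alpha})$; that is, that suitable contraction elements of the factors lift to genuine contraction elements of $G$.

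The lifting is carried out one step at a time along the system, using the key technical statement that \emph{if $(H,\beta)$ has finite depth and $M\triangleleft H$ is closed and $\beta$-stable, then $\mathfrak q_M$ maps $\con{\beta}$ onto $\con{\beta|^{H/M}}$.} Granting this and applying it to the successive kernels $N_j/N_{j+1}$ of the maps $G_{j+1}\to G_j$, any $\bar h\in\bcg{\alpha_i}\subseteq\con{\alpha_i}$ lifts to a compatible thread $(c_j)_{j\ge i}$ with $c_j\in\con{\alpha_j}$ and $\mathfrak q_{j,j-1}(c_j)=c_{j-1}$; the thread determines $c\in G$ with $\mathfrak q_i(c)=\bar h$, and $c\in\con{\alpha}$ because $\mathfrak q_j(\alpha^m(c))=\alpha_j^m(c_j)\to\ident$ for every $j$ while the $N_j$ form a neighbourhood base. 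This gives $\mathfrak q_i(\con{\alpha})\supseteq\bcg{\alpha_i}$ and finishes~(\ref{prop:con_densei}). I expect the displayed lifting statement to be the main obstacle: after lifting $\bar x\in\con{\beta|^{H/M}}$ to some $y\in H$ one has $\beta^m(y)\to M$, and one must correct $y$ by an element $\mu\in M$ so that $\beta^m(y\mu)\to\ident$; this amounts to absorbing an $M$-valued ``defect orbit'' by a genuine orbit in $M$, and it is precisely here that finite depth (expansiveness) must be exploited, through the decomposition $U=U_+U_-$, the finiteness of $[\beta(U_+):U_+]$ (Lemma~\ref{lem:recalltidy}, Lemma~\ref{lem:capfinite}) and the resulting description $\con{\beta}=\bigcup_{p}\beta^{-p}(U_-)$.

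For~(\ref{prop:con_denseii}) I would avoid homoclinic lifting entirely and argue algebraically, the crucial observation being that $[\con{\alpha},\con{\alpha^{-1}}]\subseteq\bcg{\alpha}$. Indeed, if $a\in\con{\alpha}$ and $b\in\con{\alpha^{-1}}$ then $\alpha^m([a,b])=[\alpha^m(a),\alpha^m(b)]$, and as $m\to+\infty$ one factor tends to $\ident$ while the other stays bounded; since in a profinite group conjugation by arbitrary elements preserves convergence to $\ident$ (open normal subgroups form a base), it follows that $[\alpha^m(a),\alpha^m(b)]\to\ident$, and the same argument applies as $m\to-\infty$, so $[a,b]\in\con{\alpha}\cap\con{\alpha^{-1}}=\bcg{\alpha}$. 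Applying~(\ref{prop:con_densei}) to both $\alpha$ and $\alpha^{-1}$ (the pair $(G,\alpha^{-1})$ is again compact and topologically transitive, having the same orbits), the sets $\con{\alpha}$ and $\con{\alpha^{-1}}$ are dense, so $\con{\alpha}\times\con{\alpha^{-1}}$ is dense in $G\times G$ and, by continuity of the commutator map, $[\con{\alpha},\con{\alpha^{-1}}]$ is dense in $\{[a,b]:a,b\in G\}$. As $\overline{\bcg{\alpha}}$ is a closed subgroup (here $\bcg{\alpha}$ is normal by Proposition~\ref{prop:normal_subs}) containing this dense set, it contains every commutator and hence the subgroup $[G,G]$ they generate.
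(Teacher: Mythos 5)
Your part~(\ref{prop:con_denseii}) is correct and is essentially the paper's own argument: normality of $\con{\alpha}$ and $\con{\alpha^{-1}}$ (Proposition~\ref{prop:normal_subs}) gives $[\con{\alpha},\con{\alpha^{-1}}]\subseteq\con{\alpha}\cap\con{\alpha^{-1}}=\bcg{\alpha}$, and density plus continuity of the commutator map does the rest. The reduction of part~(\ref{prop:con_densei}) to the finite-depth quotients $(G_i,\alpha_i)$ via Proposition~\ref{prop:Proj_Limit} and Corollary~\ref{cor:bcon_dense} also matches the paper. Where you diverge — and where the gap lies — is in passing from density of $\con{\alpha_i}$ in each $G_i$ back to density of $\con{\alpha}$ in $G$. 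The paper does this in one line by invoking \cite[Theorem~1]{WJaw}; that citation is carrying the entire nontrivial content of the step, and it is precisely the result whose non-metrizable case was only settled by Jaworski. You propose instead to lift contraction elements along the inverse system, and your outline needs two things it does not supply.

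First, the ``key technical statement'' — that for a finite-depth pair $(H,\beta)$ and a closed $\beta$-stable normal subgroup $M$ the quotient map carries $\con{\beta}$ \emph{onto} $\con{\beta|^{H/M}}$ — is only asserted (``Granting this\dots''), followed by a paragraph locating the difficulty rather than resolving it. The statement is true and is provable with the paper's machinery: for $U$ normal with $\bigcap_k\beta^k(U)=\triv$ and $U=U_+U_-$ one has $\con{\beta}=\bigcup_N\beta^{-N}(U_-)$, and the containment~(\ref{eq:chosen_subgroup}) established inside the proof of Proposition~\ref{prop:findepth} yields $(UM)_-=U_-M$ and hence $(UM/M)_-={\mathfrak q}_M(U_-)$, from which surjectivity follows; but as written this is a hole, not a proof. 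Second, your thread construction $(c_j)_{j\ge i}$ chosen ``one step at a time'' presupposes that the index set is a countable chain, i.e.\ that $G$ is metrizable. For a general compact $G$ the family $\{V_i\}$ is merely directed, so there is no recursion to run, and the usual compactness argument for producing a compatible thread fails because the fibres ${\mathfrak q}_{j,j-1}^{-1}(c_{j-1})\cap\con{\alpha_j}$ are not closed. This is exactly the obstruction that makes the general case rest on \cite{WJaw}. In sum, your plan would recover (a version of) the metrizable case of \cite[Theorem~3.26]{BaumW_Cont} once the lifting lemma is actually proved, but it does not establish the proposition in the generality stated.
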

\begin{proof}
(\ref{prop:con_densei}) By Proposition~\ref{prop:Proj_Limit}, $G$ is the inverse limit of pairs $(G_i,\alpha_i)$ with finite depth where each $G_i$ is a quotient, $G/N_i$. Then $G_i$ inherits from $G$ the property of having no proper open, $\alpha$-stable subgroups and so, by Corollary~\ref{cor:bcon_dense}, $\con{\alpha_i}$ is dense in $G_i$. Hence the contraction subgroup of $\alpha$ modulo $N_i$ is dense for each $i$ and it follows, by \cite[Theorem~1]{WJaw}, that $\con{\alpha}$ is dense in $G$. 

(\ref{prop:con_denseii}) Part (\ref{prop:con_densei}) implies that $\con{\alpha}$ and $\con{\alpha^{-1}}$ are both dense in $G$. Since $\con{\alpha}$ and $\con{\alpha^{-1}}$ are  normal subgroups of $G$,
$$
[\con{\alpha},\con{\alpha^{-1}}] \subset \con{\alpha}\cap \con{\alpha^{-1}} = \bcg{\alpha}
$$
and the claim follows.
\end{proof}

It might be thought that $\bcg{\alpha}$ would also be dense in $G$ by the reasoning used to establish Proposition~\ref{prop:con_dense}(\ref{prop:con_densei}). However, the next example, which is the same as \cite[Examples~5.6]{KSchmidt}, shows that that is not always the case. 

\begin{example}
\label{ex:con_2-ways_not_dense}
 Let $C_p$ be the cyclic group of order~$p$ and let $\sigma$ be the shift on $C_p^{\mathbb{Z}}$, see Subsection~\ref{sec:examples}. The map $\varphi : C_p^{\mathbb{Z}} \to C_p^{\mathbb{Z}}$ defined by
 $$
 \varphi(f)(n) = f(n) -_p f(n + 1)
 $$ 
 is a surjective homomorphism which commutes with $\sigma$ and has kernel the order~$p$ subgroup $\underline{C_p}$ of constant sequences. 
 
 Define an inverse system $\{((G_n,\sigma),\varphi_{m,n})\}_{n\in\mathbb{N}}$, where  $G_n = C_p^{\mathbb{Z}}$ and $\varphi_{n,n+1} = \varphi$. Put $(G,\tilde\sigma) = {\varprojlim (G_n,\sigma)}$ and let $\varphi_n : G \to C_p^{\mathbb{Z}}$ be the standard projections satisfying $\varphi_n = \varphi_m\circ \varphi_{m,n}$. The pair $(G,\tilde\sigma)$ satisfies the following.
 \begin{enumerate}
 \item If $V$ is an open subgroup of $G$, then there is $n\in\mathbb{N}$ such that $\ker(\varphi_n)\leq V$, whence $\bigcap_{k\in\mathbb{Z}} \tilde\sigma^k(V) \ne \triv$ and $(G,\tilde\sigma)$ does not have finite depth.
 \item Every finite depth quotient of $(G,\tilde\sigma)$ is isomorphic to $(\Cp^{\mathbb{Z}},\sigma)$ and hence has depth~$p$.
 \item If $f\in \bcg{\tilde\sigma}$, then $\varphi_n(f)\in \bcg{\sigma}$ for every $n$, whence $\varphi_n(f)$ has finite support for every $n\in \mathbb{N}$. However, by the definition of $\varphi$ and since $\varphi_n = \phi\circ \varphi_{n+1}$, if $f\ne\ident$ and 
 $$
m = \min(\text{support of }\varphi_{n+1}(f))\text{ and }M = \max(\text{support of }\varphi_{n+1}(f)),
$$ 
then $\min(\text{support of }\varphi_{n}(f)) = m-1$ and $\max(\text{support of }\varphi_{n}(f))=M$, that is, the smallest interval supporting $\varphi_n(f)$ increases as $n$ decreases. That cannot occur if the support of $\varphi_n(f)$ is to be finite and non-empty for every $n$. Hence the support of $\varphi_n(f)$ is empty for every $n$ and $f=\ident$. Therefore $\bcg{\tilde\sigma} = \triv$. 
\end{enumerate} 
\end{example}

Further to Proposition~\ref{prop:con_dense}, the homoclinic subgroup is not in general a complemented subgroup of $(G,\alpha)$, as the next construction shows. 
\begin{example}
\label{ex:does_not_split}
Let $\varphi: C_4^{\mathbb{Z}}\to C_4^{\mathbb{Z}}$ be the homomorphism defined by 
$$
\varphi(x)_n = x_n-x_{n+1}.
$$
Then $\varphi$ commutes with the shift automorphism, $\sigma$. Hence, setting $H_k = C_4^{\mathbb{Z}}$, $\alpha_k = \sigma$  and $\varphi_{k,k+1} = \varphi$ for each $k\in\mathbb{N}$ produces an inverse system $\left\{(H_k, \alpha_k), \varphi_{k,l}\right\}_{k\leq l\in\mathbb{N}}$ whose inverse limit will be denoted
$$
H \dfn
\varprojlim (C_4^{\mathbb{Z}},\sigma) = \left\{ (x_k)\in\left(C_4^{\mathbb{Z}}\right)^{\mathbb{N}} \mid \phi(x_{k+1}) = x_k\right\}.
$$

The subgroup $2C_4$ of $C_4$, which is isomorphic to $C_2$, determines a $\sigma$-stable subgroup $(2C_4)^{\mathbb{Z}}\leq C_4^{\mathbb{Z}}$ that is isomorphic to $C_2^{\mathbb{Z}}$. Since $2C_4^{\mathbb{Z}}$ is also stable under $\varphi$, $H$ contains the $\sigma$-stable subgroup
$$
\varprojlim (2C_4^{\mathbb{Z}},\sigma) = \left\{ (x_k)\in\left(2C_4^{\mathbb{Z}}\right)^{\mathbb{N}} \mid \phi(x_{k+1}) = x_k\right\} \cong \varprojlim (C_2^{\mathbb{Z}},\sigma)
$$
and we also have
$$
\varprojlim (C_4^{\mathbb{Z}},\sigma)/\varprojlim (2C_4^{\mathbb{Z}},\sigma) \cong \varprojlim ((C_4/2C_4)^{\mathbb{Z}},\sigma) 
\cong \varprojlim (C_2^{\mathbb{Z}},\sigma).
$$

Let $\varphi_1: (x_k)\mapsto x_1$ be the projection of $\varprojlim (2C_4^{\mathbb{Z}},\sigma)$ onto its first coordinate. Then the range of $\varphi_1$ is isomorphic to $C_2^{\mathbb{Z}}$ and $\ker \varphi_1$ is a closed, $\sigma$-stable subgroup of~$H$. Put $G = H/\ker\varphi_1$. Then $\varprojlim (2C_4^{\mathbb{Z}},\sigma)/\ker\varphi_1$ is a $\sigma$-stable subgroup of $G$ that is isomorphic to $\varphi_1(2C_4^{\mathbb{Z}})\cong C_2^{\mathbb{Z}}$ and the quotient of $G$ by this subgroup is isomorphic to $\varprojlim (C_2^{\mathbb{Z}},\sigma)$. Therefore 
$$
\overline{\bcg{G}} = \varprojlim (2C_4^{\mathbb{Z}},\sigma)/\ker\varphi_1
$$
and 
$$
G/\overline{\bcg{G}} \cong \varprojlim C_2^{\mathbb{Z}}.
$$
Since both $\overline{\bcg{G}}$ and $C_2^{\mathbb{Z}}$ have exponent~2 and $G$ does not, the sequence
$$
\xymatrix{
\triv  \ar@{->}[r] & \overline{\bcg{G}} \ar@{->}[r] &G \ar@{->}[r] & \varprojlim C_2^{\mathbb{Z}} \ar@{->}[r] &\triv
}
$$
does not split. 
\end{example}

The following corollary to Proposition~\ref{prop:con_dense} is referred to in the next section. 
\begin{corollary}
\label{cor:finte->central}
Let $(G,\alpha)$ be topologically transitive and the subgroup $N\leq G$ be finite, normal and $\alpha$-stable. Then $N$ is contained in the centre of $G$.
\end{corollary}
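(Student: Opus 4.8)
The plan is to show that the contraction group $\con{\alpha}$ centralizes $N$, and then to conclude using the fact (Proposition~\ref{prop:con_dense}(\ref{prop:con_densei})) that $\con{\alpha}$ is dense in $G$ together with the obvious closedness of a centralizer. Here $G$ is compact, as throughout this section. First I would record the one structural input about $N$: since $N$ is finite and $\alpha$-stable, $\alpha$ permutes the finitely many elements of $N$, so $\alpha|_N$ is a permutation of a finite set and hence has finite order. In particular, for every $c\in N$ the orbit $\{\alpha^k(c)\}_{k\geq 0}$ lies in $N$ and is \emph{periodic} in $k$.

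Next I would fix $n\in N$ and $x\in\con{\alpha}$ and examine the commutator $c\dfn xnx^{-1}n^{-1}$. Because $N$ is normal, $xnx^{-1}\in N$ and hence $c\in N$. Applying $\alpha^k$ gives $\alpha^k(c) = [\alpha^k(x),\alpha^k(n)]$. As $k\to\infty$ we have $\alpha^k(x)\to\ident_G$ by definition of $\con{\alpha}$, while $\alpha^k(n)$ merely ranges over the finite set $N$. Since the commutator map is jointly continuous and $N$ is finite, the convergence is uniform over $N$: for any neighbourhood $W$ of $\ident_G$ there is a neighbourhood $U$ of $\ident_G$ with $[u,m]\in W$ for all $u\in U$ and all $m\in N$ (take the intersection of the finitely many neighbourhoods supplied by continuity as $m$ runs through $N$). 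Choosing $W$ so that $W\cap N = \triv$, which is possible because the finite subgroup $N$ is discrete, forces $\alpha^k(c)=\ident_G$ for all sufficiently large $k$.

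This is the crux of the argument, and the step I expect to be the main obstacle to state cleanly: the sequence $\alpha^k(c)$ both converges to $\ident_G$ and, living in $N$, is periodic in $k$; a periodic sequence in the discrete set $N$ that is eventually $\ident_G$ must be \emph{constantly} $\ident_G$, so in particular $c=\ident_G$. Thus $x$ and $n$ commute. As $x\in\con{\alpha}$ and $n\in N$ were arbitrary, $\con{\alpha}$ is contained in the centralizer $C_G(N)$. Since $C_G(N)$ is closed and contains the dense subgroup $\con{\alpha}$, it equals $G$, i.e. $N\leq Z(G)$.

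I note that the corollary can also be proved without appealing to density: the centralizer $C_G(N)=\bigcap_{n\in N} C_G(n)$ is \emph{open}, because each $C_G(n)$ is the preimage of the point $n$ under the continuous map $g\mapsto gng^{-1}$ into the discrete finite set $N$, and it is $\alpha$-stable, because $\alpha(N)=N$ implies that $\alpha(g)$ centralizes $N$ whenever $g$ does. Topological transitivity of the compact pair $(G,\alpha)$, which rules out proper open $\alpha$-stable subgroups, then gives $C_G(N)=G$ directly. Either route reaches the conclusion; I would present the density argument as the main line, consistent with the corollary being stated as a consequence of Proposition~\ref{prop:con_dense}.
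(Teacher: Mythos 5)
Your main argument is essentially the paper's own proof: the paper also takes $x=hyh^{-1}y^{-1}\in N$ for $h\in\con{\alpha}$ and $y\in N$, uses finiteness of $N$ to extract a period $a$ with $\alpha^{an}(x)=x$ and $\alpha^{an}(y)=y$, observes that $x=\alpha^{an}(h)\,y\,\alpha^{an}(h)^{-1}y^{-1}\to\ident_G$, concludes $x=\ident_G$, and finishes by density of $\con{\alpha}$ (Proposition~\ref{prop:con_dense}); your ``uniform continuity over the finite set $N$ plus periodicity'' phrasing is just a mild repackaging of the same periodicity step. Your closing remark, however, is a genuinely different and arguably cleaner route that the paper does not take: since $N$ is normal and finite, each map $g\mapsto gng^{-1}$ is a continuous map into the discrete set $N$, so $C_G(N)$ is an open, $\alpha$-stable subgroup, and topological transitivity (equivalently $\nub{\alpha}=G$, which by Corollary~\ref{cor:nub} excludes proper open $\alpha$-stable subgroups) forces $C_G(N)=G$. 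That version avoids Proposition~\ref{prop:con_dense} entirely, and hence the inverse-limit machinery behind it, at the cost of not exhibiting the explicit commutator computation; both are correct.
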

\begin{proof}
Since $N$ is finite and $\alpha$-stable, for any $x\in N$ there is a positive integer $a$ such that $\alpha^{an}(x) = x$ for all $n\in \mathbb{Z}$. Suppose that $y\in N$ and $h\in \con{\alpha}$. Put $x = hyh^{-1}y^{-1}$ and choose $a$ such that $x = \alpha^{an}(x)$ and $y = \alpha^{an}(y)$ for all $n\in\mathbb{Z}$. Then 
$$
x = \alpha^{an}(x) = \alpha^{an}(h)y\alpha^{an}(h^{-1})y^{-1} \to \ident_G \hbox{ as }n\to\infty.
$$
Hence $y$ is centralized by $\con{\alpha}$ and it follows by Proposition~\ref{prop:con_dense}(\ref{prop:con_densei}) that $y$ belongs to the centre of $G$. 
\end{proof}

\section{A Jordan-H\"older Theorem for Pairs with Finite Depth}
\label{sec:Jordan-Holder}

In this section the algebraic structure of compact pairs $(G,\alpha)$ that are topologically transitive and have finite depth is investigated. The main result is Theorem~\ref{thm:Jordan-Holder}, which establishes that such pairs have a composition series where the factors are isomorphic to shifts, $(F^{\mathbb{Z}},\sigma)$ for some finite simple group $F$, and that the factors are unique up to permutation. This theorem is the direct analogue of the Jordan-H\"older Theorem for finite groups. 

Many of the results of this section have counterparts in the ergodic theory literature. It is shown in \cite[Proposition~2]{Kitchens} that every compact pair $(G,\alpha)$ with finite depth is a \emph{subshift of finite type}, that is, there is a finite group $F$ such that $(G,\alpha)$ is isomorphic to a closed, $\sigma$-stable subgroup of $(F^{\mathbb{Z}},\sigma)$. Indeed, if $V$ is an open, normal subgroup of $G$ with $\bigcap_{n\in\mathbb{Z}} \alpha^n(V) = \triv$, then $F$ may be taken to be $G/V$. Such subshifts are studied in \cite{Kitchens} and a notion of `block size' introduced. The closest counterpart to that notion in the present paper may be seen in Proposition~\ref{prop:prodeq}: the number $k$ such that $\alpha^k(V_+)V_- = G$ is related to the block size. The expansive, or finite depth, pair $(G,\alpha)$ is broken down into factors in \cite[Theorem 1(ii)]{Kitchens} and \cite[Proposition 10.2]{KSchmidt}. These theorems correspond to Proposition~\ref{prop:opennormal} below, which produces an $\alpha$-stable normal series for $(G,\alpha)$ in which each of the factors is a shift. 

\subsection{The depth of $(G,\alpha)$}
\label{sec:more_on_depth}
Analogy with the theory of finite groups motivates the results in this section and the depth of the pair $(G,\alpha)$ corresponds to the order of a group under this analogy. The first step is to verify that the depth of pairs on finite depth, see Definition~\ref{defn:depth}, behaves as expected under quotients. 

\begin{proposition}
\label{prop:findepth}
Let $H$ be a closed, normal, $\alpha$-stable subgroup of $G$. Then $(G,\alpha)$ has finite depth if and only if both $(H,\alpha|_H)$ and $(G/H,\alpha|^{G/H})$ have finite depth.  Moreover, 
\begin{equation}
\label{eq:depth}
\depth(G,\alpha) = \depth(G/H,\alpha|^{G/H})\depth(H,\alpha|_H).
\end{equation} 
\end{proposition}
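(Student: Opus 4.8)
The plan is to fix a single, well-chosen open witness $V\le G$ and to read off all three depths from the finite groups $\alpha(V_+)/V_+$, $\alpha(V_+\cap H)/(V_+\cap H)$ and $\alpha(V_+)H/V_+H$, handling the three finite-depth statements separately from the index formula. Throughout let $q\colon G\to G/H$ be the quotient map and note that, since $H$ is $\alpha$-stable, $\alpha^k(V\cap H)=\alpha^k(V)\cap H$ and $q(\alpha^k(V))=(\alpha|^{G/H})^k(q(V))$ for all $k\in\ZZ$.

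The finite-depth equivalence splits into an easy and a hard half. For the direction $(\Leftarrow)$, suppose $A\le H$ and $\bar B\le G/H$ are open subgroups with trivial $\alpha$-cores. Choose an open subgroup $V_1\le G$ with $V_1\cap H\le A$ and set $V_2:=q^{-1}(\bar B)$, an open subgroup containing $H$. Then $V:=V_1\cap V_2$ has $\bigcap_{k\in\ZZ}\alpha^k(V)=\triv$: any $x$ in this intersection satisfies $q(x)\in\bigcap_k(\alpha|^{G/H})^k(\bar B)=\triv$, hence $x\in H$, and then $x\in\bigcap_k\alpha^k(V_1\cap H)\le\bigcap_k\alpha^k(A)=\triv$. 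For the sub-part of $(\Rightarrow)$, if $\bigcap_k\alpha^k(V)=\triv$ then $V\cap H$ is open in $H$ and $\bigcap_k\alpha^k(V\cap H)=(\bigcap_k\alpha^k(V))\cap H=\triv$, so $(H,\alpha|_H)$ has finite depth.

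The one genuinely delicate point, and what I expect to be the main obstacle, is showing that $(G/H,\alpha|^{G/H})$ has finite depth. The naive witness $q(V)=VH/H$ fails outright: its $\alpha$-core is $\bigl(\bigcap_k\alpha^k(V)H\bigr)/H$, which can be all of $G/H$ even when $\bigcap_k\alpha^k(V)=\triv$ (e.g.\ when $VH=G$). My plan is to take $V$ normal and tidy above, so that $V=V_+V_-$ with $V_+,V_-\triangleleft G$ and $V_+\cap V_-=\triv$, and to exploit the two one-sided facts
\[
\bigcap_{k\ge0}\alpha^{-k}(V_+)=\triv\quad\text{and}\quad\bigcap_{k\ge0}\alpha^{k}(V_-)=\triv,
\]
which follow from $\bigcap_{k\in\ZZ}\alpha^k(V)=\triv$ together with the monotonicities $\alpha^{-1}(V_+)\le V_+$ and $\alpha(V_-)\le V_-$. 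Pushing these decreasing families through $q$ and using compactness of $H$ gives the corresponding trivial one-sided intersections for $q(V_+)$ and $q(V_-)$ in $G/H$. The proposal is then to use a sufficiently wide window $W:=\bigcap_{|k|\le n}\alpha^k(V)$ and to prove, for $n$ large, that $q(W)$ is an open subgroup of $G/H$ with trivial $\alpha$-core; the finiteness of all overlaps $\alpha^i(V_+)\cap\alpha^j(V_-)$ supplied by Lemma~\ref{lem:capfinite} is precisely what should control the discrepancy between $q\bigl(\bigcap_{|k|\le n}\alpha^k(V)\bigr)$ and $\bigcap_{|k|\le n}\alpha^k(q(V))$. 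Verifying that this core is actually trivial is the crux; failing a clean direct argument, quotient expansiveness can be extracted from the results of \cite{KSchmidt,Kitchens} cited above.

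Granting finite depth of all three pairs, the formula~(\ref{eq:depth}) follows from one index computation. With $V$ open, normal, tidy above and of trivial core, $V_+\triangleleft G$ and $\alpha(V_+)/V_+$ is finite of order $\depth(G,\alpha)$ by Lemma~\ref{lem:recalltidy}. Since $V_+\le\alpha(V_+)$, the modular law gives $\alpha(V_+)\cap V_+H=V_+\bigl(\alpha(V_+)\cap H\bigr)$, while $\alpha$-stability of $H$ gives $\alpha(V_+)\cap H=\alpha(V_+\cap H)$; two applications of the second isomorphism theorem then yield
\[
[\alpha(V_+):V_+]=[\alpha(V_+)H:V_+H]\cdot[\alpha(V_+\cap H):V_+\cap H].
\]
It remains to identify the factors. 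Because $(V\cap H)_+=V_+\cap H$ and, after replacing $V$ by $\bigcap_{k=0}^m\alpha^k(V)$ for large $m$ (Lemma~\ref{lem:recalltidy} applied inside $H$), $V\cap H$ is tidy above in $H$, Definition~\ref{defn:depth} gives $[\alpha(V_+\cap H):V_+\cap H]=\depth(H,\alpha|_H)$. Likewise $[\alpha(V_+)H:V_+H]=[\alpha(q(V_+)):q(V_+)]$, and identifying $q(V_+)$ with the $+$-part of a tidy-above witness for $G/H$ gives $[\alpha(V_+)H:V_+H]=\depth(G/H,\alpha|^{G/H})$; independence of the depth from the witness (Proposition~\ref{prop:Possame}) makes these identifications legitimate. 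The compatibility $q(V_+)=(q(V))_+$ needed here is governed by the same ``commuting of intersection with $q$'' issue that underlies the quotient finite-depth step, so I would dispose of both points with a single compactness-plus-Lemma~\ref{lem:capfinite} argument.
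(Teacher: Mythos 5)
Your skeleton is right and you have correctly isolated the one hard point, but you have not actually closed it. The backward implication, the finite depth of $(H,\alpha|_H)$, and the index bookkeeping at the end all essentially match the paper's argument. The gap is the finite depth of $(G/H,\alpha|^{G/H})$: you observe (correctly) that the naive witness $q(V)$ fails because intersection need not commute with the quotient map, you announce that a wide window $\bigcap_{|k|\le n}\alpha^k(V)$ together with Lemma~\ref{lem:capfinite} ``should control the discrepancy'', and then you concede that ``verifying that this core is actually trivial is the crux'' and offer to import expansiveness of the quotient from \cite{KSchmidt,Kitchens}. That concession is the proof. Nothing in your sketch shows that $\bigcap_{k}\alpha^k(VH)=H$. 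The one-sided trivialities $\bigcap_{k\ge0}q(\alpha^{-k}(V_+))=\triv$ and $\bigcap_{k\ge0}q(\alpha^{k}(V_-))=\triv$ do follow by the nested-compact-sets argument, but they do not combine to give triviality of the core of $q(V)$, precisely because $(q(V))_{\pm}$ need not equal $q(V_{\pm})$ --- and that identification is the very same uncontrolled ``commuting of intersection with $q$'' that you flagged. Finiteness of the overlaps $\alpha^i(V_+)\cap\alpha^j(V_-)$ by itself does not obviously repair this, and you give no argument that it does.

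What the paper actually does at this point is sharper than widening the window: it normalises $V$ so that, in addition to $V=V_+V_-$ with $V_{\pm}\triangleleft G$, one has $\alpha(V_+)\cap V_-=\triv$ (replace $V$ by $\alpha^{-1}(V)\cap V$) and the compatibility $\alpha(V_+)V_-\cap H=\alpha((V\cap H)_+)(V\cap H)_-$, which requires applying Lemma~\ref{lem:recalltidy} to $V\cap H$ separately, since tidiness above of $V$ does not pass to $V\cap H$. With $\alpha(V_+)V_-$ then a direct product of normal subgroups, an explicit element computation shows $VH\cap\alpha(VH)\le\left(V\cap\alpha(V)\right)H$, and induction gives $\bigcap_{k=m}^{n}\alpha^k(VH)\le\bigl(\bigcap_{k=m}^{n}\alpha^k(V)\bigr)H$, hence $\bigcap_{k\in\ZZ}\alpha^k(VH)=H$. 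This containment is also exactly what legitimises your identification $(q(V))_+=q(V_+)$ in the depth formula, so both of your deferred points are settled by the one computation. You should either carry it out or supply an equivalent argument; as it stands the central implication of the proposition is asserted rather than proved.
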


\begin{proof}
Suppose that  $H$ and $G/H$ have finite depth. Choose open subgroups $W_1\leq G/H$ and $W_2\leq H$ such that 
$$
\bigcap_{k\in{\ZZ}} (\alpha|^{G/H})^k(W_1) =\{ \ident_{G/H}\}\hbox{ and }\bigcap_{k\in{\ZZ}} \alpha^k(W_2) = \{\ident_H\}.
$$ 
Put $V_1 = {\mathfrak q}_H^{-1}(W_1)$, and let $V_2$ be an open subgroup of $G$ such that $V_2\cap H \leq W_2$. Then $V = V_1\cap V_2$ satisfies $\bigcap_{k\in{\ZZ}} \alpha^k(V) = \triv$ and hence $G$ has finite depth. 

For the converse direction, suppose that $G$ has finite depth and let $V$ be an open subgroup such that $\bigcap_{k\in{\ZZ}} \alpha^k(V) = \{\ident_G\}$. Then $H$ has finite depth because 
$$
\bigcap_{k\in{\ZZ}} (\alpha|_H)^k(V\cap H) = \{\ident_H\}.
$$ 
That $G/H$ has finite depth cannot be shown by a similar direct argument, see Remark~\ref{rem:findepth} below. Instead, it will first be shown that $V$ may be assumed to be normal with $V=V_+V_-$ and to satisfy
\begin{equation}
\label{eq:finite_depth2}
\alpha(V_+)\cap V_- = \{\ident_G\} \text{ and } \alpha(V_+)V_-\cap H = \alpha((V\cap H)_+)(V\cap H)_-. 
\end{equation} 

It has already been noted that $V$ may be assumed to be normal. For the other properties note that, since $H$ is $\alpha$-stable, $\alpha(V\cap H) = \alpha(V)\cap H$.
Hence, appealing to Lemma~\ref{lem:recalltidy} and replacing $V$ by $\bigcap_{k=0}^n\alpha^k(V)$ for sufficiently large $n$,  it may be supposed that\footnote{Tidiness above for $V = V_+V_-$ does not imply the same for $V\cap H$, see \cite[Example 6.4]{wi:further}, and Lemma~\ref{lem:recalltidy} must be applied to both $V$ and $V\cap H$ separately.} 
\begin{equation*}
\label{eq:VandHcapV}
V = V_+V_-\text{ and }
V\cap H = (V\cap H)_+(V\cap H)_-.
\end{equation*} 
Then, since $V_+\cap V_- = \{\ident_G\}$, (\ref{eq:finite_depth2}) holds if $V$ is replaced by $\alpha^{-1}(V)\cap V$. All of the subgroups in the above argument are normal because $\alpha$ is an automorphism of $G$.

Assuming that $V$ satisfies (\ref{eq:finite_depth2}), it will be shown that 
\begin{equation}
\label{eq:chosen_subgroup}
\bigcap_{k=m}^n \alpha^k(VH) \leq \left(\bigcap_{k=m}^n \alpha^k(V)\right)H\text{ for all }m,n\in\ZZ,
\end{equation}
whence $\bigcap_{k\in {\mathbb Z}} \alpha^k(VH) = H$ and $(G/H,\alpha|^{G/H})$ has finite depth as claimed.

To establish (\ref{eq:chosen_subgroup}) when $m=0$ and $n=1$, consider $x$ in $VH\cap \alpha(VH)$. Then
\begin{equation}
\label{eq:xincap}
x = v_{1+}v_{1-}y_1 = \alpha(v_{2+})\alpha(v_{2-})y_2, \text{ for some } v_{i\pm} \in V_{\pm} \text{ and }y_i\in H.
\end{equation}
Since $\alpha(V_+)$ and $V_-$ are normal subgroups of $G$, (\ref{eq:finite_depth2})  implies that $\alpha(V_+)V_-$ is a direct product, $\alpha(V_+)\times V_-$. Hence (\ref{eq:xincap}) may be rearranged to yield
$$
 \left(v_{1+}^{-1}\alpha(v_{2+})\right)\left(v_{1-}^{-1}\alpha(v_{2-})\right) = y_1y_2^{-1}, 
$$
where the left side belongs to $\alpha(V_+)V_-$ and the right to $H$. Then (\ref{eq:finite_depth2}) implies that there are $w_{\pm}\in (H\cap V)_{\pm}$ such that 
$$
\left(v_{1+}^{-1}\alpha(v_{2+})\right)\left(v_{1-}^{-1}\alpha(v_{2-})\right) = \alpha(w_+)w_-.
$$
Rearranging this equation yields 
$$
\alpha(w_+)^{-1}\left(v_{1+}^{-1}\alpha(v_{2+})\right) = w_-\left(v_{1-}^{-1}\alpha(v_{2-})\right)^{-1}.
$$
Since the left side belongs to $\alpha(V_+)$ and the right to $V_-$ and $\alpha(V_+)\cap V_- = \{\ident_G\}$, both sides equal the identity and it follows that
$$
\alpha(v_{2+}) = v_{1+} \alpha(w_+),
$$
which belongs to $V_+H$. Substituting into (\ref{eq:xincap}) yields that
$$
x = v_{1+} \alpha(w_+) \alpha(v_{2-})y_2 \in V_+\alpha(V_-)H = \left(V\cap \alpha(V)\right)H
$$ 
and we have shown that $VH\cap \alpha(VH) \leq \left(V\cap \alpha(V)\right)H$. Induction on $n$ and translation by $\alpha^m$ imply that (\ref{eq:chosen_subgroup}) holds for all $m$ and $n$.

Finally, to establish the formula (\ref{eq:depth}) for the depth, choose $V$ satisfying (\ref{eq:finite_depth2}) and recall Definition~\ref{defn:depth} that $\depth(G,\alpha) = [\alpha(V_+) : V_+]$. Then
\begin{equation}
\label{eq:depth1}
\depth(G,\alpha) = [\alpha(V_+) : V_+H\cap \alpha(V_+)][V_+H\cap \alpha(V_+) : V_+]
\end{equation}
because $V_+ \leq V_+H \cap \alpha(V_+)\leq \alpha(V_+)$. Observe first that, since $V_+H$ is a normal subgroup of $G$ and $V_+\leq \alpha(V_+)$, the Second Isomorphism Theorem implies that 
$$
\alpha(V_+) /( V_+H\cap \alpha(V_+)) \cong \alpha(V_+)H/V_+H
$$ 
and consequently that
\begin{equation}
\label{eq:depth2}
[\alpha(V_+) : V_+H\cap \alpha(V_+)] = [\alpha(V_+)H : V_+H] = \depth(G/H,\alpha|^{G/H}),
\end{equation}
where the last equality holds because $(VH/H)_+ = (V_+H)/H$ and, as follows from~(\ref{eq:chosen_subgroup}), $(VH/H)_+\cap (VH/H)_-= \triv$. Observe second that, 
$$
(V_+H\cap \alpha(V_+))/V_+ \cong (H\cap \alpha(V_+))/(H\cap V_+)
$$
and consequently that
\begin{equation}
\label{eq:depth3}
[V_+H\cap \alpha(V_+) : V_+] = [H\cap \alpha(V_+) : H\cap V_+] = \depth(H,\alpha|_H).
\end{equation}
Equations (\ref{eq:depth1}), (\ref{eq:depth2}) and (\ref{eq:depth3}) imply (\ref{eq:depth}).
\end{proof}

\begin{remark}
\label{rem:findepth}
Triviality of $\bigcap_{k\in{\ZZ}} \alpha^k(V)$ does not imply that $\bigcap_{k\in{\ZZ}} (\alpha|^{G/H})^k({\mathfrak q}_H(V))$ is trivial.
Consider $(C_2^{\ZZ},\sigma)$ and the subgroups $V = \left\{f\in C_2^{\ZZ} \mid f(0) = \bar{0} \right\}$ and $H$ the set of constant sequences. Then $\bigcap_{k\in{\ZZ}} \sigma^k(V) = \{\ident_{C_2^{\mathbb{Z}}}\}$ but ${\mathfrak q}_H$ maps $V$ onto $C_2^{\ZZ}/H$. 
\end{remark}

\subsection{An $\alpha$-stable normal series}
\label{sec:alphastableseries}

The first step towards composition series for pairs with finite depth is to show that they have normal series in which the factors are isomorphic to shifts.
\begin{proposition}
\label{prop:opennormal}
Suppose that $(G,\alpha)$ is topologically transitive and has finite depth.  Then $G$ has an $\alpha$-stable series
\begin{equation}
\label{eq:opennormal}
\triv = G_0 \triangleleft G_1 \triangleleft \cdots \triangleleft G_r = G
\end{equation}
of closed, normal subgroups, $G_j$, and, for each $j\in \{0,1,\dots,r-1\}$:
\begin{enumerate}
\item there is a finite group, $F_j$, and a surjective homomorphism 
$$
\varphi_j : F_j^{\ZZ} \to G_{j+1}/G_{j}
$$ 
such that $\ker(\varphi_j) \cap{F_j^{[\ZZ]}} = \triv$; and \label{prop:opennormali}
\item $\varphi_j\circ \sigma = \alpha_j\circ \varphi_j$, where $\alpha_j = \alpha|^{G_{j+1}/G_j}$. 
\end{enumerate}
\end{proposition}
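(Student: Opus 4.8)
The plan is to induct on the depth $d = \depth(G,\alpha)$, peeling a single ``shift layer'' off the bottom at each step. For the base case, if $d = 1$ then $G$ is finite by Proposition~\ref{prop:Possame}, and a finite topologically transitive pair must be trivial: the identity lies in the dense orbit $\{\alpha^n(x)\}$, forcing that orbit to be $\{\ident\}$, so $G = \triv$ and the empty series ($r=0$) works. For the inductive step, assume $G$ is infinite. The heart of the matter is a building-block lemma: there is a nontrivial closed, normal, $\alpha$-stable subgroup $G_1 \leq G$, a finite group $F$, and a continuous surjection $\varphi : F^{\ZZ} \to G_1$ intertwining $\sigma$ with $\alpha|_{G_1}$ and satisfying $\ker(\varphi)\cap F^{[\ZZ]} = \triv$.

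Granting the lemma, I would finish as follows. By Proposition~\ref{prop:findepth} the quotient $G/G_1$ has finite depth with $d = \depth(G/G_1)\depth(G_1)$; since $G_1$ contains the infinite subgroup $\varphi(F^{[\ZZ]})$, Proposition~\ref{prop:Possame} gives $\depth(G_1) > 1$, so $\depth(G/G_1) < d$. Because $\nub{\alpha} = G$, Proposition~\ref{prop:ErgQuot} yields $\nub{\alpha|^{G/G_1}} = {\mathfrak q}_{G_1}(G) = G/G_1$, so $G/G_1$ is again topologically transitive. The inductive hypothesis applied to $G/G_1$ gives an $\alpha$-stable normal series with shift-quotient factors; pulling it back through ${\mathfrak q}_{G_1}$ and prepending $G_0 = \triv \triangleleft G_1$ produces the required series for $G$, with bottom factor $G_1/G_0 = G_1$ a shift-quotient by the lemma.

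To build $G_1$, I would begin as in Remark~\ref{rem:separating} with a normal open $V$ satisfying $\bigcap_k \alpha^k(V) = \triv$ and $V = V_+V_-$, and, exactly as in the passage from $V$ to $\alpha^{-1}(V)\cap V$ used to obtain~(\ref{eq:finite_depth2}), arrange also that $\alpha(V_+)\cap V_- = \triv$. Put $N := \alpha(V_+)\cap\alpha^{-1}(V_-)$, a finite normal subgroup by Lemma~\ref{lem:capfinite}; each $\alpha^k(N)$ is then normal. I would check that $\{\alpha^k(N)\}_{k\in\ZZ}$ is an internal direct sum: since $V_+\subseteq\alpha(V_+)$ and $\alpha(V_-)\subseteq V_-$, one has $\alpha^k(N)\subseteq\alpha(V_+)$ for $k\leq 0$ and $\alpha^k(N)\subseteq V_-$ for $k\geq 1$, so the ``left'' and ``right'' parts meet inside $\alpha(V_+)\cap V_- = \triv$; iterating this separation gives $\langle\alpha^k(N):k\in\ZZ\rangle\cong N^{[\ZZ]}$ with $\alpha$ acting as $\sigma$. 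As $N\subseteq\bcg{\alpha}$, its elements satisfy $\alpha^k(x)\to\ident$ as $|k|\to\infty$, so the isomorphism $N^{[\ZZ]}\to\langle\alpha^k(N)\rangle$ is continuous and extends to a continuous homomorphism $\varphi : N^{\ZZ}\to G$ with dense image in $G_1 := \overline{\langle\alpha^k(N):k\in\ZZ\rangle}$; by compactness $\varphi$ is onto $G_1$, it intertwines $\sigma$ and $\alpha$, and it is injective on $N^{[\ZZ]}$, so $\ker(\varphi)\cap N^{[\ZZ]} = \triv$. Taking $F = N$ finishes the lemma, \emph{provided} $N \neq \triv$.

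The main obstacle is exactly this proviso: the single-site group $N = \alpha(V_+)\cap\alpha^{-1}(V_-)$ can be trivial even when $G$ is infinite, namely for one-step group subshifts in which no admissible symbol can be isolated between identities. To handle it I would first replace the presentation $G\hookrightarrow (G/V)^{\ZZ}$, $g\mapsto(\alpha^n(g)V)_n$, by a higher-block (vertex) recoding: since $G$ is infinite and $\bcg{\alpha}$ is dense by Corollary~\ref{cor:bcon_dense}, there is a nontrivial homoclinic element of finite ``width'' $w$, lying in the finite group $\alpha^w(V_+)\cap\alpha^{-w}(V_-)$ of Lemma~\ref{lem:capfinite}, and recoding to blocks of length of order $w$ turns it into a nontrivial single-site element without changing $(G,\alpha)$ up to isomorphism. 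After this reduction the construction above applies verbatim. Making the recoding precise at the level of groups—verifying that it preserves normality, finite depth, and the normalization $\alpha(V_+)\cap V_- = \triv$—is the step I expect to demand the most care; the internal-direct-sum verification and the continuity/extension argument are then routine.
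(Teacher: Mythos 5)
Your overall strategy --- peel a shift layer off the bottom, pass to the quotient via Propositions~\ref{prop:findepth} and~\ref{prop:ErgQuot}, and terminate by descent on the depth --- is the same as the paper's, and the base case, the internal-direct-sum verification, the continuous extension to $F^{\ZZ}$ and the inductive bookkeeping are all sound. The gap is exactly the proviso you flag yourself: $N=\alpha(V_+)\cap\alpha^{-1}(V_-)$ may be trivial, and the higher-block recoding you propose as a repair does not work at the group level. Any such recoding of the presentation $g\mapsto(\alpha^n(g)V)_n$ amounts to replacing $V$ by an intersection of translates $\bigcap_k\alpha^k(V)$, and a direct computation shows this only moves $V_+$ and $V_-$ towards each other: for instance $V'=\bigcap_{k=0}^{w}\alpha^k(V)$ has $V'_+=V_+$ and $V'_-=\alpha^w(V_-)$, so the new candidate $\alpha(V'_+)\cap\alpha^{-1}(V'_-)=\alpha(V_+)\cap\alpha^{w-1}(V_-)$ is \emph{contained in} the old $N$ rather than enlarged. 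Passing to the power $\alpha^w$ (non-overlapping blocks) is not available either, since the proposition concerns $\alpha$ itself. So the claim that ``after this reduction the construction applies verbatim'' is unsupported, and it sits at the crux of the proof, since producing a nontrivial single-site group is where topological transitivity must actually be used.

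The paper's resolution keeps $V$ fixed and increases the spacing instead. By Proposition~\ref{prop:prodeq}(\ref{cor:prodeqii}) the union $\bigcup_{n}\alpha^n(V_+)\cap\alpha^{-n}(V_-)$ is dense in $G$, so for $G$ nontrivial there is $l>0$ with $\alpha^{l}(V_+)\cap V_-\neq\triv$. Taking $l$ \emph{minimal} and setting $F_0:=\alpha^{l}(V_+)\cap V_-$ (finite and normal by Lemma~\ref{lem:capfinite}), minimality gives $\alpha^{l-1}(V_+)\cap V_-=\triv$, which is precisely the separation needed for $\alpha^{l-1}(V_+)\times F_0\times\alpha(V_-)$ to be a direct product and hence for the translates $\alpha^n(F_0)$ to generate a copy of $F_0^{[\ZZ]}$; no normalization $\alpha(V_+)\cap V_-=\triv$ and no recoding are needed. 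If you replace your $N$ by this $F_0$, the remainder of your argument goes through essentially as written.
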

\begin{proof}
Let $V$ be an open, normal subgroup of $G$ such that $\bigcap_{k\in{\ZZ}} \alpha^k(V) = \triv$. Then $V_+$ and $V_-$ are closed, normal subgroups of $G$ and $V_+\cap V_- = \triv$. 

Proposition~\ref{prop:prodeq}(\ref{cor:prodeqii}) implies that $\alpha^{l}(V_+)\cap V_-$ is non-trivial for some $l>0$. Choose $l$ be the smallest such integer and put $F_0 := \alpha^{l}(V_+)\cap V_-$. Then $F_0$ is a finite normal subgroup, by Lemma~\ref{lem:capfinite}, and $\alpha^{l-1}(V_+) \times F_0 \times \alpha(V_-)$ is a direct product of closed, normal subgroups of $G$. Since $\alpha^n(F_0)$ is contained in $\alpha(V_-)$ when $n>0$ and in $\alpha^{l-1}(V_+)$ when $n<0$, it follows that the map $\varphi : F_0^{[\ZZ]}\to G$ defined by
$$
\varphi(f) = \prod_{n\in\ZZ} \alpha^n({f(n)}),\qquad (f\in F_0^{[\ZZ]}), 
$$  
is injective. Since  $\left\{\alpha^{l-n}(V_+)\alpha^n(V_-)\right\}_{n\in\NN}$ is a base of neighbourhoods of $\ident_G$, and since 
$$
\varphi\left( \left\{ f\in F_0^{[\ZZ]} \mid f(m) = \ident \text{ if } -n\leq m\leq n\right\}\right) \leq \alpha^{l-n}(V_+)\alpha^n(V_-)
$$ 
for each $n$, this map extends uniquely to a continuous homomorphism ${\varphi}_0 : F_0^{\ZZ}\to G$ such that $\ker(\varphi_0) \cap F_0^{[\ZZ]} = \triv$.  By construction, $\alpha\circ{\varphi}_0 ={\varphi}_0\circ \sigma$ and  $G_1 \dfn {\varphi}_0(F_0^{{\mathbb Z}})$ is a closed, normal $\alpha$-stable subgroup of $G$. 

By Proposition~\ref{prop:findepth}, $(G/G_1,\alpha|^{G/G_1})$ has finite depth. Hence the argument may be repeated for $(G/G_1,\alpha|^{G/G_1})$ (provided that it is non-trivial) to produce a closed, normal, $\alpha$-stable subgroup of $G/G_1$ that is isomorphic to a quotient of~$F_1^{\mathbb{Z}}$ for some finite group $F_1$. Pulling back to $G$ yields the subgroup, $G_2$. Iterating produces subgroups $G_j$, $j=1, 2, \dots$ as required. This iteration terminates at some $r$ with $G_r=G$ because Proposition~\ref{prop:findepth} and the fact that $\depth(G_{j+1}/G_j,\alpha_j) = |F_j|$ imply that the sequence of integers $\{\depth(G/G_j,\alpha|^{G/G_j}\}$ strictly decreases so long as $G_j\ne G$. 
\end{proof}

\begin{remark}
The factor in Proposition~\ref{prop:opennormal}(\ref{prop:opennormali}) is a quotient of a shift. It will be seen shortly however that, although the homomorphism $\varphi_j$ need not be an isomorphism, the factor is in fact isomorphic to a shift.  
\end{remark}

The calculation in the previous paragraph yields a formula for the depth of $G$. 
\begin{corollary}
\label{cor:depth}
The depth of the topologically transitive pair $(G,\alpha)$ in Proposition~\ref{prop:opennormal} is equal to $\prod_{j=0}^{r-1} |F_j|$.
\end{corollary}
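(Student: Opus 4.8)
The plan is to reduce the depth of $G$ to a product of the depths of the factors of the series (\ref{eq:opennormal}) using the multiplicativity established in Proposition~\ref{prop:findepth}, and then to evaluate each factor depth as $|F_j|$. First I would peel the factors off one at a time: every $G_j$ is closed, normal and $\alpha$-stable in $G$, hence also in $G_{j+1}$, and $\alpha|^{G_{j+1}/G_j}=\alpha_j$, so Proposition~\ref{prop:findepth} applies at each stage (each $G_j$ being finite depth as a closed, normal, $\alpha$-stable subgroup of the finite-depth group $G$). An induction on the length $r$ then gives
$$
\depth(G,\alpha)=\prod_{j=0}^{r-1}\depth\bigl(G_{j+1}/G_j,\alpha_j\bigr),
$$
so that it remains only to prove $\depth(G_{j+1}/G_j,\alpha_j)=|F_j|$ for each $j$, the fact already recorded in the proof of Proposition~\ref{prop:opennormal}.

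Fix $j$ and write $F=F_j$ and $K=\ker\varphi_j$, so that $\varphi_j$ induces an equivariant isomorphism $(F^{\ZZ}/K,\sigma)\cong(G_{j+1}/G_j,\alpha_j)$. A short direct computation gives $\depth(F^{\ZZ},\sigma)=|F|$: for the open subgroup $V=\{f\in F^{\ZZ}\mid f(0)=\ident_F\}$ one finds that $V_+$ consists of the $f$ supported on $[1,\infty)$, that $\sigma(V_+)$ consists of the $f$ supported on $[0,\infty)$, and hence that $[\sigma(V_+):V_+]=|F|$. Since $K$ is a closed, $\sigma$-stable subgroup of the finite-depth pair $F^{\ZZ}$, it has finite depth by Proposition~\ref{prop:findepth}, and applying that proposition to $\triv\to K\to F^{\ZZ}\to F^{\ZZ}/K\to\triv$ yields
$$
|F|=\depth(F^{\ZZ},\sigma)=\depth\bigl(G_{j+1}/G_j,\alpha_j\bigr)\,\depth(K,\sigma|_K).
$$
I would thus be reduced to showing $\depth(K,\sigma|_K)=1$, which by Proposition~\ref{prop:Possame} is the same as showing that $K$ is finite.

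The hard part is exactly this finiteness of $K$. The hypothesis $\ker(\varphi_j)\cap F^{[\ZZ]}=\triv$ says precisely that the homoclinic group of $(K,\sigma|_K)$ is trivial, because $F^{[\ZZ]}$ is the homoclinic group $\bcg{\sigma}$ of the shift and $\bcg{\sigma|_K}=K\cap\bcg{\sigma}$. I would argue by contradiction: if $K$ were infinite, then, $(K,\sigma|_K)$ being a compact pair of finite depth, Lemma~\ref{lem:alphastable_subgroup} makes $\nub{\sigma|_K}$ an open, hence finite-index and therefore infinite, subgroup of $K$; by Proposition~\ref{prop:nubergodic} the restriction of $\sigma$ to $\nub{\sigma|_K}$ is topologically transitive, and it has finite depth as a closed subgroup of $K$. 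Corollary~\ref{cor:bcon_dense} would then force the homoclinic group of $\nub{\sigma|_K}$ to be dense, hence nontrivial, producing a nontrivial homoclinic element lying in $K\cap F^{[\ZZ]}$ and contradicting triviality of $\bcg{\sigma|_K}$. Hence $K$ is finite, $\depth(K,\sigma|_K)=1$, and $\depth(G_{j+1}/G_j,\alpha_j)=|F_j|$, which together with the product formula completes the proof.
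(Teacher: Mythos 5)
Your argument is correct, and its skeleton is the paper's: the corollary is obtained by combining the multiplicativity of depth from Proposition~\ref{prop:findepth} with the identity $\depth(G_{j+1}/G_j,\alpha_j)=|F_j|$, which is exactly the ``calculation in the previous paragraph'' that the paper's one-line proof points to. Where you genuinely diverge is in how you justify that identity. The paper asserts it in the proof of Proposition~\ref{prop:opennormal} without detail; the fact it implicitly rests on --- that a closed, normal, $\sigma$-stable subgroup $N\leq F^{\ZZ}$ with $N\cap F^{[\ZZ]}=\triv$ is finite --- is only established later, in Lemma~\ref{lem:kernel}, by induction on the composition length of $F$ together with the classification of normal $\sigma$-stable subgroups of simple shifts (Lemma~\ref{lem:Simple_normal}). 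You instead prove the finiteness of $\ker(\varphi_j)$ directly: if it were infinite, its nub would be open (Lemma~\ref{lem:alphastable_subgroup}), hence infinite, topologically transitive (Proposition~\ref{prop:nubergodic}) and of finite depth, so Corollary~\ref{cor:bcon_dense} would produce a nontrivial homoclinic element in $\ker(\varphi_j)\cap F^{[\ZZ]}$. This is shorter and more self-contained than Lemma~\ref{lem:kernel}, uses only material preceding the corollary (avoiding the forward reference the paper's reading order would otherwise require), and isolates the exact amount of that lemma needed here, namely finiteness rather than centrality; what it does not give you is the extra information of Lemma~\ref{lem:kernel} that the paper needs later for Proposition~\ref{prop:simplecomp}. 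Your explicit verification that $\depth(F^{\ZZ},\sigma)=|F|$ and the reduction $\depth(F^{\ZZ}/K)=|F|/\depth(K)$ via Proposition~\ref{prop:findepth} are also correct and fill in steps the paper leaves to the reader.
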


\subsection{Normal subgroups of $(F^{\mathbb{Z}},\sigma)$}
\label{sec:technical_lemmas}

The factors in the $\alpha$-stable normal series (\ref{eq:opennormal}) are shifts $(F^{\mathbb{Z}},\sigma)$, and the next aim is to analyze the structure of these shifts, particularly in the case when $F$ is simple. 
\begin{lemma}
\label{lem:Simple_normal}
Suppose that $H$ is a closed, normal, $\sigma$-invariant subgroup of $F^{\ZZ}$, where $F$ is a finite simple group.  Then exactly one of the following holds:
\begin{enumerate}
\item \label{lem:Simple_normali}
$H = F^{\ZZ}$;
\item  \label{lem:Simple_normalii}
$H = \triv$; 
\item \label{lem:Simple_normaliii}
there is $n\in {\mathbb Z}$ such that $H = \left\{ h\in F^{\ZZ} \mid h(j) = \ident \text{ for }j\geq n\right\}$; 
\end{enumerate}
if \emph{\bf $F$ is abelian}, $F = \Cp$ is the cyclic group of order $p$ for some prime $p$, \\ there are $d\in{\mathbb N}$ and  $a_0,\ \dots,\ a_{d-1} \in \Cp$ such that 
\begin{enumerate}
\setcounter{enumi}{3}
\item \label{lem:Simple_normaliv}
$H$ is the subgroup of functions satisfying the order~$d$ difference equation
$$
H = \Bigl\{ h\in F^{\ZZ} \mid h(r+d) = \sum_{j=0}^{d-1} a_jh(r+j)\text{ for all }r\in{\mathbb Z} \Bigr\},
$$ 
in which case $H$ is a finite group with order $p^d$; or 
\item \label{lem:Simple_normalv}
there is $n\in {\mathbb Z}$ such that 
$$
H = \Bigl\{ h\in F^{\ZZ} \mid h(r+d) = \sum_{j=0}^{d-1} a_jh(r+j)\text{ for all }r\geq  n\Bigr\}.
$$
\end{enumerate}
\end{lemma}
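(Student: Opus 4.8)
The plan is to split on whether $F$ is abelian, since the two cases need entirely different tools. I also read the hypothesis ``$\sigma$-invariant'' as the one-sided condition $\sigma(H)\subseteq H$, which is what makes the list correct: the subgroups in (\ref{lem:Simple_normaliii}) and (\ref{lem:Simple_normalv}) satisfy $\sigma(H)\subsetneq H$, while genuine equality $\sigma(H)=H$ holds precisely in (\ref{lem:Simple_normali}), (\ref{lem:Simple_normalii}) and (\ref{lem:Simple_normaliv}).

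\textbf{Non-abelian $F$.} Here I would first forget $\sigma$ and show that every closed normal subgroup of $F^{\ZZ}=\prod_j F_j$ (with $F_j$ the $j$-th copy) is a sub-product. Setting $J=\{j : F_j\leq H\}$, the two ingredients are: each $H\cap F_j$ is normal in $F_j\cong F$, hence $\triv$ or $F_j$ by simplicity; and if $\pi_j(H)\neq\triv$ then, taking $g\in H$ with $\pi_j(g)\neq\ident$ and $a\in F_j$, the commutator $[g,a]=g\,(ag^{-1}a^{-1})$ lies in $H\cap F_j$ with $j$-coordinate $[\pi_j(g),\pi_j(a)]$, nontrivial for suitable $a$ because $F$ is centreless; so $\pi_j(H)\neq\triv$ forces $F_j\leq H$. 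Thus $\bigoplus_{j\in J}F_j\leq H\leq\prod_{j\in J}F_j$, and since $H$ is closed and the direct sum is dense, $H=\prod_{j\in J}F_j$. As $\sigma(F_j)=F_{j-1}$, the condition $\sigma(H)\subseteq H$ is exactly $J-1\subseteq J$, and the subsets of $\ZZ$ closed under subtracting $1$ are $\emptyset$, $\ZZ$ and the rays $(-\infty,m]$, giving (\ref{lem:Simple_normalii}), (\ref{lem:Simple_normali}) and (\ref{lem:Simple_normaliii}) with $n=m+1$.

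\textbf{Abelian $F=\Cp$.} Now $\Cp^{\ZZ}$ is a compact $\mathbb{F}_p$-vector space, normality is automatic, and I would pass to Pontryagin duals. The dual is $\Cp^{[\ZZ]}$, identified with $L=\mathbb{F}_p[t,t^{-1}]$ via $g\leftrightarrow\sum_j g(j)t^j$ under the pairing $\langle h,g\rangle=\sum_j h(j)g(j)$; a short computation gives $\langle\sigma h,g\rangle=\langle h,tg\rangle$, so the adjoint of $\sigma$ is multiplication by $t$. Annihilators then set up a bijection between closed subgroups $H$ with $\sigma(H)\subseteq H$ and $\mathbb{F}_p[t]$-submodules $A=H^{\perp}$ of $L$ (closed under $t$ but not necessarily $t^{-1}$). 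Translating the classification back: $A=0$ gives (\ref{lem:Simple_normali}); an $L$-ideal $A=fL$ with $f(t)=t^d-\sum_{j<d}a_jt^j$ and $f(0)\neq0$ dualizes to $f(\sigma)h=0$ for all $r$, i.e.\ (\ref{lem:Simple_normaliv}), with $|H|=[L:fL]=\dim_{\mathbb{F}_p}\mathbb{F}_p[t]/(f)=p^d$; a one-sided submodule $A=t^{k_0}fR$ (where $R=\mathbb{F}_p[t]$) dualizes to the same recurrence holding only for $r\geq k_0$, i.e.\ (\ref{lem:Simple_normalv}); and $f=1$ recovers (\ref{lem:Simple_normalii}) and (\ref{lem:Simple_normaliii}).

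The hard part is the submodule classification, which I would organize around the $L$-saturation $LA=\bigcup_{k\geq0}t^{-k}A$: this is an ideal of the PID $L$, hence $0$, $L$, or $fL$ with $f$ coprime to $t$, and multiplication by $f$ reduces everything to $LA=L$. The main obstacle is the case $LA=L$, where I must prove $A=L$ or $A=t^{k_0}R$. Since $LA=L$, some $t^k\in A$, so $\{k:t^k\in A\}$ is upward closed; if it is all of $\ZZ$ then $A=L$, otherwise it is $[k_0,\infty)$ with $t^{k_0-1}\notin A$ and $t^{k_0}R\leq A$. Working in $L/t^{k_0}R=\bigoplus_{i\geq0}\mathbb{F}_p\,e_i$ with $te_i=e_{i-1}$ and $te_0=0$, one checks that every nonzero submodule contains the socle $e_0$ (multiply a top-degree element by a suitable power of $t$); as $e_0=t^{k_0-1}\notin A$, the image of $A$ vanishes and $A=t^{k_0}R$. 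Finally I would record mutual exclusivity via the invariants distinguishing the cases: $H$ is all of $F^{\ZZ}$, trivial, finite (abelian, $\sigma(H)=H$), infinite with $\sigma(H)=H$, or infinite with $\sigma(H)\subsetneq H$, and these classes are disjoint.
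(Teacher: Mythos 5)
Your proposal is correct, but it reaches the classification by a genuinely different route than the paper. The paper argues directly on $F^{\ZZ}$: either for every position $n$ and window length $d$ some $h\in H$ vanishes on $\{n,\dots,n+d-1\}$ but not at $n+d$ --- in which case normality, simplicity and closedness force $H=F^{\ZZ}$ --- or some window determines the next coordinate; conjugation by point-supported elements then forces that coordinate to be trivial when $F$ is non-abelian, while for $F=\Cp$ the determination is linear and, after minimizing $d$ and then the starting position, yields the difference equations in (\ref{lem:Simple_normaliv}) and (\ref{lem:Simple_normalv}). You instead split the two cases between different pieces of machinery: for non-abelian $F$ you first classify \emph{all} closed normal subgroups of the product as sub-products $\prod_{j\in J}F_j$ (the commutator trick using that $F$ is centreless simple, plus density of the direct sum), and only then impose $\sigma(H)\subseteq H$, which reduces to the combinatorics of downward-closed subsets of $\ZZ$; for $F=\Cp$ you dualize and classify the $\mathbb{F}_p[t]$-submodules of $L=\mathbb{F}_p[t,t^{-1}]$ via the $L$-saturation and the socle argument in $L/t^{k_0}R$. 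I checked the key steps (the correspondence $\sigma(H)\subseteq H\Leftrightarrow tA\subseteq A$, the reduction by $f$, and the claim that $LB=L$ forces $B=L$ or $B=t^{k_0}R$) and they are sound. Your approach buys a complete, reusable classification: the non-abelian half shows $\sigma$ plays no role in the lattice of closed normal subgroups, and the abelian half is exactly the module-theoretic dictionary of algebraic dynamics, which makes the provenance of the difference equation and the count $|H|=p^d$ transparent; the paper's argument buys uniformity across the two cases and avoids duality. Two small things to tighten: state explicitly that the generator $f$ of $LA$ is taken monic of degree $d\ge 1$ with non-zero constant term in cases (\ref{lem:Simple_normaliv}) and (\ref{lem:Simple_normalv}) (this is what makes the recurrence genuinely of order $d$ and solvable backwards, and what keeps the five cases disjoint), and note that your closing list of invariants does not separate (\ref{lem:Simple_normaliii}) from (\ref{lem:Simple_normalv}) --- one extra line does it, e.g.\ when $d\ge1$ and $a_0\ne\bar{0}$ the group in (\ref{lem:Simple_normalv}) contains a non-trivial global solution of the recurrence and hence elements non-trivial at arbitrarily large coordinates, which (\ref{lem:Simple_normaliii}) forbids.
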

\begin{proof}
Suppose that, for each $n\in {\mathbb Z}$ and $d\geq0$, there is $h\in H$ with
\begin{equation*}
\label{eq:independent}
h(j) = \ident_F\text{ for every }j\in\{n,  n+1, \dots, n+d-1\} \text{ and } h(n+d) \ne \ident_F.
\end{equation*}
Then, since $H$ is normal in $F^{\mathbb Z}$ and $F$ is simple, the homomorphism
$$
\pi_{n,d} : H \to F^{d+1}, \quad h \mapsto \left(h(n),h(n+1), \dots, h(n+d) \right)
$$ is surjective for each $n\in {\mathbb Z}$ and $d\geq0$. Since $H$ is closed, this implies that (\ref{lem:Simple_normali}) holds. 
Therefore, if (\ref{lem:Simple_normali}) fails, there are $n^*\in {\mathbb Z}$ and $d\geq0$ such that for every $h\in H$
\begin{equation*}
h(j) = \ident_F\text{ for }j\in\{n^*, n^*+1, \dots, n^*+d-1\} \implies h(n^*+d) = \ident_F. 
\end{equation*}
Then $\alpha$-invariance of the subgroup $H$ implies that, for every $h\in H$ and $r\geq n^*$,
\begin{equation}
\label{eq:statement}
\hbox{ the values $h(j)$ for $j\in\{r,  r+1, \dots, r+d-1\}$ determine $h(r+d)$.}
\end{equation}

Assume that (\ref{lem:Simple_normali}) fails and consider the case when $F$ is non-abelian. Then, since $H\triangleleft F^{\mathbb Z}$, conjugation of $H$ by elements $[f]_{r+d} \in F^{\mathbb Z}$ defined by  
$$
[f]_{r+d}(j) = \begin{cases} f & \text{ if } j = r+d\\
\ident_F & \text{ if } j\ne r+d
\end{cases},\quad (f\in F),
$$ 
leads to a contradiction to (\ref{eq:statement}) unless $h(r+d) = \ident_F$ for every $r\geq n^*$. Therefore~$H$ satisfies either (\ref{lem:Simple_normalii}) or (\ref{lem:Simple_normaliii}) when $F$ is not abelian. 

Assume that (\ref{lem:Simple_normali}), (\ref{lem:Simple_normalii}) and (\ref{lem:Simple_normaliii}) fail and let $F = \Cp$ be abelian. Then (\ref{eq:statement}) and the $\sigma$-invariance of $H$ imply that there are $a_0,\ \dots, \, a_{d-1}$ in $\Cp$ such that every $h\in H$ satisfies
\begin{equation}
\label{eq:diff1}
h(r+d) = \sum_{j=0}^{d-1} a_jh(r+j)\text{ for all }r \geq  n^*.
\end{equation}
($\ast$) \emph{Choose $d$ to be the smallest value for which {\rm (\ref{eq:diff1})} holds.}\\
Should it happen that (\ref{eq:diff1}) holds for every $n^*$, then either case (\ref{lem:Simple_normalii}) or (\ref{lem:Simple_normaliv}) obtains. Otherwise,\\ 
($\ast\ast$) \emph{choose the smallest value of  $n^*$ for which {\rm(\ref{eq:diff1})} holds}.\\ Then (\ref{lem:Simple_normalv}) obtains with $n=n^*$: it may be seen immediately that $H$ is contained in the set on the right of the equation and it may be shown that ($\ast$) and ($\ast\ast$) imply that $H$ is no smaller than this set. 
\end{proof}


\begin{lemma}
\label{lem:kernel}
Let $F$ be a finite group, and suppose that $N$ is a closed, normal $\sigma$-stable subgroup of $F^{\mathbb Z}$ such that $N\cap F^{[{\mathbb Z}]} = \triv$. Then $N$ is finite and contained in the centre of $F^{\mathbb Z}$.
\end{lemma}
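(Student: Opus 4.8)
The plan is to prove the two assertions in turn: first that $N$ is finite, and then to deduce centrality essentially for free. Once $N$ is known to be finite it is a finite, normal, $\sigma$-stable subgroup of $F^{\mathbb Z}$, and since the shift $(F^{\mathbb Z},\sigma)$ is topologically transitive, Corollary~\ref{cor:finte->central} immediately places $N$ inside the centre of $F^{\mathbb Z}$. So the whole difficulty is concentrated in showing that $N$ is finite, and the idea is to extract this from the nub machinery rather than from a direct combinatorial analysis of the allowed blocks of $N$ (which is possible but laborious).

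First I would observe that $N$ is compact, being closed in the compact group $F^{\mathbb Z}$, so $(N,\sigma|_N)$ is a compact pair. The pair $(F^{\mathbb Z},\sigma)$ has finite depth, as witnessed by the open subgroup $V=\{h\in F^{\mathbb Z}\mid h(0)=\ident_F\}$, for which $\bigcap_{k\in{\ZZ}}\sigma^k(V)=\triv$. Finite depth is inherited by any closed, $\sigma$-stable subgroup: if $H\leq F^{\mathbb Z}$ is such a subgroup then $V\cap H$ is open in $H$ and $\bigcap_{k\in\ZZ}\sigma^k(V\cap H)=\bigl(\bigcap_{k\in\ZZ}\sigma^k(V)\bigr)\cap H=\triv$, which is the easy direction of Proposition~\ref{prop:findepth}. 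Hence $(N,\sigma|_N)$ has finite depth, and so, since $\nub{\sigma|_N}$ is a closed $\sigma$-stable subgroup of $N$, does $(\nub{\sigma|_N},\sigma|_{\nub{\sigma|_N}})$.

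The key step is to show that $\nub{\sigma|_N}=\triv$. Write $K=\nub{\sigma|_N}$. By Proposition~\ref{prop:nubergodic} the restriction of $\sigma$ to $K$ is topologically transitive, and by the previous paragraph $(K,\sigma|_K)$ has finite depth, so Corollary~\ref{cor:bcon_dense} shows that $\bcg{\sigma|_K}$ is dense in $K$. On the other hand the homoclinic group of the restricted system is just its intersection with the homoclinic group of the full shift: since $\bcg{\sigma}=F^{[\mathbb Z]}$ and convergence $\sigma^n(x)\to\ident$ as $|n|\to\infty$ for $x\in K$ is tested identically in $K$ and in $F^{\mathbb Z}$, we get $\bcg{\sigma|_K}=K\cap F^{[\mathbb Z]}\leq N\cap F^{[\mathbb Z]}=\triv$. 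A dense trivial subgroup forces $K=\triv$.

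Finally, $(N,\sigma|_N)$ has finite depth, so Lemma~\ref{lem:alphastable_subgroup} tells us that $\nub{\sigma|_N}$ is an \emph{open} subgroup of $N$. Since this open subgroup has just been shown to be trivial, $N$ is discrete; being also compact, $N$ is finite, and centrality follows as above via Corollary~\ref{cor:finte->central}. I expect the only points needing care to be the bookkeeping that identifies $\bcg{\sigma|_K}$ with $K\cap F^{[\mathbb Z]}$ and that transfers finite depth down from $F^{\mathbb Z}$ to $N$ and to $K$; the genuine conceptual move—and the reason the argument is short—is to route everything through the nub, exploiting that for a finite-depth compact pair the nub is simultaneously open (Lemma~\ref{lem:alphastable_subgroup}) and, being topologically transitive of finite depth, has dense homoclinic group (Corollary~\ref{cor:bcon_dense}), so that triviality of the latter collapses the whole group.
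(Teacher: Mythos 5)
Your proof is correct, and it takes a genuinely different route from the paper's. The paper proves finiteness of $N$ by induction on the length of a composition series for $F$: it passes to the quotient shift $(F/F_{r-1})^{\mathbb Z}$, invokes the full classification of closed, normal, $\sigma$-invariant subgroups of a simple shift (Lemma~\ref{lem:Simple_normal}) to pin down the image of $N$ there, and rules out the surjective case by an accumulation-point argument showing that any $f\in N$ mapping into $(F/F_{r-1})^{[\mathbb Z]}$ has eventually periodic tails, whence $f^{-1}\sigma^{p_1p_2}(f)$ is finitely supported and therefore trivial. You instead route everything through the nub of the restricted system: $(N,\sigma|_N)$ inherits finite depth, its nub $K$ is open in $N$ by Lemma~\ref{lem:alphastable_subgroup} and is a topologically transitive finite-depth pair, so Corollary~\ref{cor:bcon_dense} makes $\bcg{\sigma|_K}=K\cap F^{[\mathbb Z]}\leq N\cap F^{[\mathbb Z]}=\triv$ dense in $K$, forcing $K=\triv$ and hence $N$ discrete and finite; both arguments then finish identically via Corollary~\ref{cor:finte->central}. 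Your argument is shorter and more conceptual, needs no induction on composition length and no appeal to Lemma~\ref{lem:Simple_normal}, and in fact works verbatim with $F^{\mathbb Z}$ replaced by any compact finite-depth pair whose homoclinic group meets $N$ trivially; what it does not provide is the explicit structural information of Lemma~\ref{lem:Simple_normal}, which the paper needs anyway for Proposition~\ref{prop:simplecomp}, so in context the paper's detour is not wasted. I checked the dependency chain: Proposition~\ref{prop:nubergodic}, Lemma~\ref{lem:alphastable_subgroup}, Corollary~\ref{cor:bcon_dense} and Corollary~\ref{cor:finte->central} are all established before and independently of Lemma~\ref{lem:kernel}, so there is no circularity.
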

\begin{proof}
The finite group $F$ has, by the Jordan-H\"older Theorem, a composition series
\begin{equation}
\label{eq:finitecomp}
\triv = F_0 \triangleleft F_1 \triangleleft \cdots \triangleleft F_{r-1} \triangleleft F_r = F,
\end{equation}
and all composition series for $F$ have the same length, $r$. The proof that $N$ is finite will be by induction on~$r$. For the base case, note that $N$ is trivial when $r=0$.

Let $F$ have a composition series of length $r>0$ and assume that the claim has been established for groups with shorter composition series. Let $N$ be a closed, normal, $\sigma$-stable subgroup of $F^{\mathbb Z}$ such that $N\cap F^{[{\mathbb Z}]} = \triv$. Then, considering $F_{r-1}^{\mathbb Z}$ to be a subgroup of $F^{\mathbb Z}$: $N\cap F_{r-1}^{\mathbb Z}$ is a closed, normal, $\sigma$-stable of $F_{r-1}^{\mathbb Z}$ and $N\cap F_{r-1}^{[{\mathbb Z}]}$ is trivial. Hence, by the induction hypothesis, 
\begin{equation}
\label{eq:InductionHyp}
N\cap F_{r-1}^{{\mathbb Z}}\text{ is finite}.
\end{equation} 

The Second Isomorphism Theorem implies that 
\begin{equation}
\label{eq:2ndIso}
N/\left(N\cap F_{r-1}^{{\mathbb Z}}\right)\cong NF_{r-1}^{\mathbb Z}/F_{r-1}^{\mathbb Z},
\end{equation}
which is a closed, normal, $\sigma|^{F^{\mathbb Z}/F_{r-1}^{\mathbb Z}}$-stable subgroup of $F^{\mathbb Z}/F_{r-1}^{\mathbb Z}$. The map 
$$
\varphi : F^{\mathbb Z}/F_{r-1}^{\mathbb Z}\to \left(F/F_{r-1}\right)^{\mathbb Z},\ \ \varphi(fF_{r-1}^{\mathbb Z})(j) = f(j)F_{r-1}
$$ 
is an isomorphism that  intertwines $\sigma$. Hence $\varphi(NF_{r-1}^{\mathbb Z}/F_{r-1}^{\mathbb Z})$ is a closed, normal, $\sigma$-stable subgroup of $(F/F_{r-1})^{\mathbb Z}$, where $F/F_{r-1}$ is a finite simple group because (\ref{eq:finitecomp}) is a composition series. Hence, according to Lemma~\ref{lem:Simple_normal},   $\varphi(NF_{r-1}^{\mathbb Z}/F_{r-1}^{\mathbb Z})$ falls into one of five cases. Since cases (\ref{lem:Simple_normaliii}) and (\ref{lem:Simple_normalv}) are $\sigma$-invariant but not $\sigma$-stable, $\varphi(NF_{r-1}^{\mathbb Z}/F_{r-1}^{\mathbb Z})$ must in fact be either the whole group (case~(\ref{lem:Simple_normali})) or finite (cases~(\ref{lem:Simple_normalii}) and~(\ref{lem:Simple_normaliv})). 

It will be shown that 
$$
\varphi(NF_{r-1}^{\mathbb Z}/F_{r-1}^{\mathbb Z})\cap (F/F_{r-1})^{[{\mathbb Z}]} = \triv,
$$ 
which excludes the case when $\varphi(NF_{r-1}^{\mathbb Z}/F_{r-1}^{\mathbb Z})$ is the whole group. To this end, let $f\in N$ be such that $\varphi(fF_{r-1}^{\mathbb Z})$ belongs to  $(F/F_{r-1})^{[{\mathbb Z}]}$. Since $N$ is compact,  $\{\sigma^k(f)\}_{k\geq0}$ has an accumulation point, $g$ say, which belongs to $N\cap F_{r-1}^{\mathbb Z}$ because
$$
\varphi(\sigma^k(f)F_{r-1}^{\mathbb Z}) =  \sigma^k(\varphi(fF_{r-1}^{\mathbb Z})) \to \ident\text{ as }k\to \infty.
$$ 
 Since  $N\cap F_{r-1}^{\mathbb Z}$ is a finite $\sigma$-stable subgroup by (\ref{eq:InductionHyp}), $g$ is periodic. Hence, if the  period of $g$ is $p_1$, there is an integer $n_1$ such that $f|_{(-\infty, n_1]}$ is $p_1$-periodic. Similarly, there are $p_2\geq1$ and $n_2\in {\mathbb Z}$ such that $f|_{[n_2,\infty)}$ is $p_2$-periodic, and it follows that $f^{-1}\sigma^{p_1p_2}(f)$ belongs to $N$ and has finite support. Since $N\cap F^{[{\mathbb Z}]} = \triv$, we conclude that $f = \beta^{p_1p_2}(f)$ and hence that $f=g$ and thus belongs to $F_{r-1}^{\mathbb Z}$. Therefore, as claimed, $\varphi$ does not map $NF_{r-1}^{\mathbb Z}/F_{r-1}^{\mathbb Z}$ onto $(F/F_{r-1})^{{\mathbb Z}}$  and $NF_{r-1}^{\mathbb Z}/F_{r-1}^{\mathbb Z}$ is finite. The isomorphism in (\ref{eq:2ndIso}) then implies that $N/\left(N\cap F_{r-1}^{{\mathbb Z}}\right)$ is finite and, combining with (\ref{eq:InductionHyp}), it follows that $N$ is finite. 
 
Since $N$ is finite and $\sigma$-stable and $F^{\mathbb{Z}}$ is topologically transitive,  Corollary~\ref{cor:finte->central} shows that it is contained in the centre of $F^{\mathbb{Z}}$.
\end{proof}

The factors in the composition series for $(G,\alpha)$ given in Theorem~\ref{thm:Jordan-Holder} below cannot be decomposed any further in the following sense defined in ergodic theory, see~\cite{KitchSchmidt2} for example.

\begin{definition}
\label{defn:simplecomp}
The pair $(G,\alpha)$, with $G$ compact and infinite, is \emph{irreducible} if every proper, closed, normal, $\alpha$-stable subgroup of $G$ is finite. 
\end{definition}

As might be expected, irreducibility of $(F,\sigma)$ corresponds to simplicity of $F$.
\begin{proposition}
\label{prop:simplecomp}
The pair $(G,\alpha)$ is irreducible if and only if it is isomorphic to $(F^{\ZZ},\sigma)$ for some finite simple group  $F$.
\end{proposition}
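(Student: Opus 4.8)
The plan is to prove both implications, working under the standing hypotheses of this section that $(G,\alpha)$ is compact, topologically transitive, and has finite depth (each of which is enjoyed by every shift $(F^{\ZZ},\sigma)$ with $F$ finite, so the equivalence is being asserted within this class). For the \emph{if} direction, suppose $F$ is finite simple, so $F^{\ZZ}$ is compact and infinite. Let $H$ be a proper, closed, normal, $\sigma$-stable subgroup of $F^{\ZZ}$. Being in particular $\sigma$-invariant, $H$ falls under one of the five cases of Lemma~\ref{lem:Simple_normal}. As noted in the proof of Lemma~\ref{lem:kernel}, cases (\ref{lem:Simple_normaliii}) and (\ref{lem:Simple_normalv}) are $\sigma$-invariant but not $\sigma$-stable and so are excluded, while case (\ref{lem:Simple_normali}) is excluded because $H$ is proper. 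Hence $H$ is of type (\ref{lem:Simple_normalii}) or (\ref{lem:Simple_normaliv}), and in either case it is finite. Thus $(F^{\ZZ},\sigma)$ is irreducible.

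For the \emph{only if} direction, suppose $(G,\alpha)$ is irreducible and apply Proposition~\ref{prop:opennormal} to produce its first factor, which is built as $G_1 = \varphi_0(F_0^{\ZZ})$ for a nontrivial finite group $F_0$ and a $\sigma$-equivariant surjection $\varphi_0 : F_0^{\ZZ}\to G_1$ satisfying $\ker(\varphi_0)\cap F_0^{[\ZZ]} = \triv$. Since $\varphi_0$ is injective on the infinite group $F_0^{[\ZZ]}$, the subgroup $G_1$ is infinite; being closed, normal and $\alpha$-stable, irreducibility forces $G_1 = G$. Hence $\varphi_0 : F_0^{\ZZ}\to G$ is a $\sigma$-equivariant surjection whose kernel $N$ is, by Lemma~\ref{lem:kernel}, finite and central in $F_0^{\ZZ}$.

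Next I would show that $F_0$ is simple. If $F'$ were a proper, nontrivial normal subgroup of $F_0$, then $\varphi_0(F'^{\ZZ}) = F'^{\ZZ}N/N$ would be a closed, normal, $\alpha$-stable subgroup of $G$ that is infinite (as $F'^{\ZZ}$ is infinite and $N$ is finite) and proper (its index in $G$ is infinite, since $[F_0^{\ZZ}:F'^{\ZZ}]$ is infinite while $[F'^{\ZZ}N:F'^{\ZZ}]\leq |N|$ is finite), contradicting irreducibility. Thus $F_0$ is simple.

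Finally I would identify $G\cong F_0^{\ZZ}/N$ with a shift. When $F_0$ is non-abelian simple, $Z(F_0^{\ZZ}) = Z(F_0)^{\ZZ} = \triv$, so the central subgroup $N$ is trivial and $G\cong F_0^{\ZZ}$ at once. The remaining, and most delicate, case is $F_0 = \Cp$: here $N$ is a finite, $\sigma$-stable subgroup of $\Cp^{\ZZ}$ with $N\cap \Cp^{[\ZZ]} = \triv$, so by Lemma~\ref{lem:Simple_normal} it is either trivial or the solution space of a two-sided recurrence $q(\sigma)h=0$ with $q(\sigma) = \sigma^d - \sum_{j=0}^{d-1} a_j\sigma^j$, whence $N = \ker q(\sigma)$. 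Finiteness of $N$ forces $a_0\neq 0$, and this is precisely what makes the $\sigma$-equivariant endomorphism $q(\sigma):\Cp^{\ZZ}\to\Cp^{\ZZ}$ surjective, since the inhomogeneous recurrence can then be solved in both directions. The resulting continuous bijection $\Cp^{\ZZ}/N \to \Cp^{\ZZ}$ of compact groups is a $\sigma$-equivariant isomorphism, giving $G\cong \Cp^{\ZZ}$. I expect the surjectivity of $q(\sigma)$ — equivalently, that quotienting by a recurrence kernel returns a full shift rather than a proper subshift — to be the main obstacle, as this is exactly where the abelian case diverges from the non-abelian one and where the finiteness of $N$ is used essentially.
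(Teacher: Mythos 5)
Your proof is correct and, for the \emph{only if} direction, follows essentially the paper's route: apply Proposition~\ref{prop:opennormal}, use irreducibility to force $G=G_1=\varphi_0(F_0^{\ZZ})$, invoke Lemma~\ref{lem:kernel} to make the kernel finite and central, show $F_0$ is simple, and then remove the kernel --- trivially in the non-abelian case and via the surjective operator $q(\sigma)$ in the abelian case. You in fact supply two details the paper elides: the explicit argument that $F_0$ is simple (via the proper infinite subgroup $\varphi_0(F'^{\ZZ})$) and the reason $q(\sigma)$ is surjective. One small correction there: finiteness of $N$ does not by itself force $a_0\neq 0$ (the recurrence $h(r+2)=h(r+1)$ has $a_0=0$ and finite solution set); what you really need is the minimality of $d$ in Lemma~\ref{lem:Simple_normal}(\ref{lem:Simple_normaliv}), equivalently that the invertible factor $\sigma^m$ may be cancelled from $q(\sigma)$, after which $a_0\neq 0$ and your two-sided solving argument goes through. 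Your \emph{if} direction is genuinely different: you read finiteness of a proper closed normal $\sigma$-stable subgroup straight off the five-case classification of Lemma~\ref{lem:Simple_normal}, discarding cases (\ref{lem:Simple_normali}), (\ref{lem:Simple_normaliii}) and (\ref{lem:Simple_normalv}), whereas the paper argues directly that any $H$ meeting $F^{[\ZZ]}$ non-trivially is all of $F^{\ZZ}$ and only then applies Lemma~\ref{lem:kernel}; your version is shorter but uses the full strength of the classification. Finally, the paper does not take finite depth as a standing hypothesis here but derives it from irreducibility (the subgroups $\bigcap_{k}\alpha^k(V)$ are proper, closed, normal and $\alpha$-stable, hence finite, and can then be shrunk to the identity), so if you want the proposition to hold with only the hypotheses of Definition~\ref{defn:simplecomp} you should add that one-line derivation rather than assume finite depth.
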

\begin{proof}
Suppose that $G = F^{\mathbb{Z}}$, where $F$ is a finite simple group and let $H$ be a closed, normal $\alpha$-stable subgroup. Consider first the case when $F$ is not abelian. If $H$ is not trivial, then there is $n\in\mathbb{Z}$ and $h\in H$ such that $h(n)\ne \ident_F$. Then, since $F$ is not abelian, $\{ [h, f^{[n]}] \mid f\in F\}$ contains a non-identity element and it follows, since $F$ is simple, that $H$ contains $F^{[n]} := \{ f^{[n]} \mid f\in F\}$. That $H=G$ follows because $H$ is shift-invariant and closed. Consider next the case when $F$ is abelian, that is, $F$ is a cyclic group of order $p$ for some prime $p$. Suppose that $H$ contains a non-identity element, $h$, with finite support. Let $n$ be the minimum of the support of $h$. Then, by adding suitable shifts of powers of $h$, it may be seen that for each $N>0$ there is an element $h'\in H$ whose support in the interval $[n-N,n+N]$ is $\{n\}$. Since $H$ is closed and a subgroup, it follows that the elements $\bar{i}^{[n]}$ ($\bar{i}\in C_p$) belong to $H$. Shift-invariance and closedness of $H$ then imply that $H$ is equal to~$G$. Hence, if $H$ is a proper subgroup of $G$, it does not intersect $F^{[\mathbb{Z}]}$ non-trivially and Lemma~\ref{lem:kernel} implies that $H$ is finite.

For the converse, assume that $(G,\alpha)$ is irreducible. Then $(G,\alpha)$ has finite depth, by Proposition~\ref{prop:Proj_Limit}. Proposition~\ref{prop:opennormal} implies that $G$ has an $\alpha$-stable series of the form (\ref{eq:opennormal}) which, since $(G,\alpha)$ is irreducible, must have $G=G_1$. Hence $G = \varphi(F^{\ZZ})$ for some finite group $F$ and  $\ker(\varphi)\cap F^{[\mathbb{Z}]} = \triv$. By Lemma~\ref{lem:kernel}, $\ker(\varphi)$ is finite and contained in the centre of $F^{\ZZ}$. 
Irreducibility of $(G,\alpha)$ then further implies that $(F^{\mathbb{Z}},\sigma)$ is irreducible, whence $F$ is simple. Lemma~\ref{lem:Simple_normal} then applies to show that either $\ker(\varphi)$ is trivial or $F = \Cp$ is abelian and
$$
\ker(\varphi) = \Bigl\{ h\in F^{\ZZ} \mid h(r-k) = \sum_{j=0}^{k-1} a_jh(r-j)\text{ for all }r\in{\mathbb Z} \Bigr\},
$$ 
for some $a_0,\ \dots,\ a_{k-1} \in \Cp$. The proof is complete in the first case. In the second case, the surjective map $\psi : F^{\ZZ}\to F^{\ZZ}$ defined by
$$
\psi(h)(r) = \sum_{j=0}^{k-1} a_jh(r-j)
$$
commutes with $\beta$ and $\ker(\psi) = \ker(\varphi)$. Thus $G\cong F^{\ZZ}/\ker(\varphi) \cong F^{\ZZ}$. 
\end{proof}

The proposition implies the assertion in the next corollary that $\varphi$ does not have a right inverse because the range of any such inverse would a proper, infinite subgroup of $C_2^{\mathbb{Z}}$ that is stable under the shift. All other assertions are obvious. 
\begin{corollary}
\label{cor:not_complemented}
Let $\varphi : C_2^{\mathbb{Z}}\to C_2^{\mathbb{Z}}$ be the homomorphism $\varphi(f)_n = f_n-f_{n+1}$. Then $\varphi$ commutes with the shift automorphism and $C:= \ker\varphi$ is the order~2 subgroup of constant functions in $C_2^{\mathbb{Z}}$. The sequence 
$$
\xymatrix{
\triv  \ar@{->}[r] & C \ar@{->}[r] &C_2^{\mathbb{Z}}  \ar@{->}^{\varphi}[r] & C_2^{\mathbb{Z}} \ar@{->}[r] &\triv
}
$$ 
is exact and does not split. 
\endproof
\end{corollary}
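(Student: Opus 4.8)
The plan is to dispatch the three ``obvious'' assertions by direct computation, deduce exactness, and then reduce the non-splitting to the irreducibility of $(C_2^{\mathbb{Z}},\sigma)$ recorded in Proposition~\ref{prop:simplecomp}.

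First I would record the computations, writing the operation on $C_2$ additively so that $-x=x$. One has $\varphi(\sigma(f))_n = \sigma(f)_n-\sigma(f)_{n+1}=f_{n+1}-f_{n+2}=\varphi(f)_{n+1}=\sigma(\varphi(f))_n$, so $\varphi$ intertwines $\sigma$. Next, $\varphi(f)=\ident$ says $f_n=f_{n+1}$ for every $n$, i.e.\ $f$ is constant, so $\ker\varphi$ is the two-element subgroup $C=\underline{C_2}$. Surjectivity follows from the recursion $f_{n+1}=f_n+g_n$: given $g\in C_2^{\mathbb{Z}}$ and any choice of $f_0$ this determines an $f$ with $\varphi(f)=g$ (alternatively, the image is a compact, hence closed, subgroup, and is seen to be all of $C_2^{\mathbb{Z}}$). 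Together these give exactness of the displayed sequence.

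It remains to show the sequence does not split in the category of pairs. Here I would flag the one conceptual point: as an extension of abstract groups it \emph{does} split, since every group of exponent $2$ is an $\mathbb{F}_2$-vector space and every short exact sequence of such splits; so the content is that no section can be chosen continuous and $\sigma$-equivariant. Suppose, for a contradiction, that $\psi:(C_2^{\mathbb{Z}},\sigma)\to(C_2^{\mathbb{Z}},\sigma)$ is a morphism with $\varphi\circ\psi=\mathrm{id}$. Then $S:=\psi(C_2^{\mathbb{Z}})$ is a subgroup that is closed (being the continuous image of a compact group), $\sigma$-stable (because $\psi$ intertwines $\sigma$), and normal (because $C_2^{\mathbb{Z}}$ is abelian). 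Since $\varphi\circ\psi=\mathrm{id}$, the restriction $\varphi|_S$ is injective, so $S\cong C_2^{\mathbb{Z}}$ is infinite; and $S$ is proper, for otherwise $\varphi|_S=\varphi$ would be injective, contradicting $\ker\varphi=C\ne\triv$.

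Finally I would invoke irreducibility. As $C_2$ is a finite simple group, Proposition~\ref{prop:simplecomp} shows that $(C_2^{\mathbb{Z}},\sigma)$ is irreducible, so by Definition~\ref{defn:simplecomp} every proper, closed, normal, $\sigma$-stable subgroup of $C_2^{\mathbb{Z}}$ is finite. This contradicts the existence of the proper, closed, normal, $\sigma$-stable, \emph{infinite} subgroup $S$, and so no such section $\psi$ exists. The main obstacle is not any single computation but precisely this conceptual point: one must work in the category of pairs, where the section is forced to commute with the shift and have closed image, since at the purely algebraic level the extension already splits.
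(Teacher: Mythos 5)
Your proposal is correct and follows essentially the same route as the paper: the paper likewise deduces non-splitting from Proposition~\ref{prop:simplecomp}, observing that the range of any right inverse would be a proper, infinite, closed, shift-stable subgroup of $C_2^{\mathbb{Z}}$, contradicting irreducibility, and treats the remaining assertions as immediate computations. Your added remark that the extension does split at the level of abstract groups, so that the obstruction lies entirely in requiring a continuous, $\sigma$-equivariant section, is a worthwhile clarification but does not change the argument.
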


\begin{remark}
\label{rem:simple}
An alternative notion of irreducibility of a topologically transitive pair $(G,\alpha)$ might be that every proper subgroup of $G$ has trivial nub. This weaker definition implies that $(G,\alpha)\cong (F^{\ZZ},\beta)$ in the non-abelian case, but in the abelian case encompasses groups such as those described in Example~\ref{ex:con_2-ways_not_dense}. 
\end{remark}

\subsection{The Jordan-H\"older and Schreier Refinement Theorems}
Transferring the Schreier Refinement and Jordan-H\"older Theorems from finite groups to pairs $(G,\alpha)$ with finite depth uses a version of the Zassenhaus Lemma, see \cite[Lemma~3.3]{Lang:Algebra}. The statement of this lemma does not apply without change however because the intersection of topologically transitive $\alpha$-stable subgroups of $G$ need not be topologically transitive, see Example~\ref{ex:intersection_not_ergodic}.  The next couple of definitions are required for the modified statement of the lemma. 

\begin{definition}
\label{defn:meet_minstable}
Let $(G, \alpha)$ be a compact group with automorphism $\alpha$. The \emph{nub intersection} of closed, $\alpha$-stable subgroups  $H_1$ and $H_2$ of $G$ is defined to be
$$
H_1\wedge H_2 = \nub{\alpha|_{H_1\cap H_2}}.
$$
\end{definition}

\begin{definition}
\label{def:isoms}
The pairs $(G_1,\alpha_1)$ and $(G_2,\alpha_2)$ are \emph{co-commensurable} if there is a pair $(G,\alpha)$ and surjective homomorphisms $\varphi_i : G\to G_i$ such that $\varphi_i\circ \alpha_i = \alpha\circ \varphi_i$ with $\ker\varphi_i$ a finite subgroup of $G$, for $i=1,2$. Denote this as $(G_1,\alpha_1) \isoms (G_2,\alpha_2)$. 
\end{definition}

\begin{remark}
\label{rem:qiso-simple}
It follows from Proposition~\ref{prop:simplecomp} that two irreducible pairs, $(G_1,\alpha_1)$ and $(G_2,\alpha_2)$, that are co-commensurable are in fact isomorphic. 
\end{remark}

\begin{proposition}
\label{prop:quasi-iso}
\begin{enumerate}
\item Co-commensurability is an equivalence relation.
\label{prop:quasi-isoi}
\item 
\label{prop:quasi-isoii}
The pairs $(H_1,\alpha_1)$ and $(H_2,\alpha_2)$ are co-commensurable if and only if there is a pair $(L,\gamma)$ and surjective homomorphisms $\psi_i : H_i\to L$ with $\gamma\circ \psi_i = \psi_i\circ \alpha_i$ for $i=1,2$, and with $\ker(\psi_i)$ finite.
\end{enumerate}
\end{proposition}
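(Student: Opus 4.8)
The plan is to make the \emph{fibre product} (pullback) of finite-kernel quotient maps the single organising tool, since every part reduces to forming, and checking the good behaviour of, such pullbacks in the category of compact pairs. Two elementary facts will be used repeatedly: a surjection with finite kernel is finite-to-one, so the preimage of a finite set under it is finite; and given morphisms $\varphi:G\to G_1$, $\varphi':G'\to G_2$ of pairs, the fibre product $\{(g,g')\mid \cdots\}$ is a closed, hence compact, subgroup of $G\times G'$ that is stable under $\alpha\times\alpha'$ whenever the defining relation is $\alpha$-equivariant, with the two coordinate projections again morphisms. (Compactness of any covering pair is automatic, a finite-kernel extension of a compact group being compact, so every group constructed below is compact.)

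For part~(\ref{prop:quasi-isoi}), reflexivity and symmetry are immediate: the identity maps witness reflexivity and Definition~\ref{def:isoms} is visibly symmetric in the two pairs. The content is transitivity. Given witnesses $(G,\alpha)$ with $\varphi_1:G\to G_1$, $\varphi_2:G\to G_2$ and $(G',\alpha')$ with $\psi_2:G'\to G_2$, $\psi_3:G'\to G_3$, all surjective with finite kernel, I would form the fibre product over $G_2$,
\[
P = \left\{ (g,g')\in G\times G' \mid \varphi_2(g)=\psi_2(g') \right\},
\]
with $\beta=(\alpha\times\alpha')|_P$, well defined because $\varphi_2,\psi_2$ intertwine $\alpha,\alpha'$ with $\alpha_2$. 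Composing the projections with $\varphi_1$ and $\psi_3$ gives maps $P\to G_1$ and $P\to G_3$; these are surjective because each projection $P\to G$, $P\to G'$ is already onto (the missing coordinate can be completed using surjectivity of $\psi_2$, respectively $\varphi_2$). The kernel of $P\to G_1$ consists of those $(g,g')\in P$ with $g\in\ker\varphi_1$, and for each of the finitely many such $g$ the admissible $g'$ form a single coset of $\ker\psi_2$; hence this kernel is finite, and symmetrically for $P\to G_3$. Thus $(P,\beta)$ witnesses $(G_1,\alpha_1)\isoms(G_3,\alpha_3)$.

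For part~(\ref{prop:quasi-isoii}), the ``if'' direction is again a pullback: given $\psi_1:H_1\to L$, $\psi_2:H_2\to L$ surjective with finite kernel, put $G=\{(h_1,h_2)\in H_1\times H_2\mid \psi_1(h_1)=\psi_2(h_2)\}$ and $\alpha=(\alpha_1\times\alpha_2)|_G$; the coordinate projections are surjective with kernels $\{\ident\}\times\ker\psi_2$ and $\ker\psi_1\times\{\ident\}$, so $(G,\alpha)$ witnesses co-commensurability exactly as above. The ``only if'' direction instead produces a common quotient from the kernels: starting from a common cover $(G,\alpha)$ with $\varphi_i:G\to H_i$ and $K_i=\ker\varphi_i$ finite, both $K_i$ are normal and $\alpha$-stable (since $\varphi_i$ intertwines), so $N=K_1K_2$ is a finite, normal, $\alpha$-stable subgroup. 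Taking $L=G/N$ and $\gamma=\alpha|^{G/N}$, each $\varphi_i$ descends along $K_i\leq N$ to a surjection $H_i=G/K_i\to G/N=L$ with finite kernel $N/K_i$, intertwining $\alpha_i$ with $\gamma$; this is the required common quotient.

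The verifications are routine categorical bookkeeping, so there is no single deep obstacle; the step demanding the most care is transitivity, where one must check that the iterated fibre product still surjects onto \emph{both} outer groups with \emph{finite} kernel. That double finiteness is precisely what the finite-to-one observation supplies: the projection $P\to G$ has finite kernel, so is finite-to-one, whence composing it with the finite-kernel map $\varphi_1$ leaves the kernel finite. I would also be careful, in applying Definition~\ref{def:isoms}, to use the intended intertwining convention $\varphi_i\circ\alpha=\alpha_i\circ\varphi_i$ (the only reading under which $\varphi_i:G\to G_i$ typechecks) and to note explicitly that each constructed cover or quotient is compact and $\alpha$-stable before calling it a pair.
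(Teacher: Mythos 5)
Your proposal is correct and follows essentially the same route as the paper: the fibre product over the common quotient for transitivity and for the ``if'' direction of (ii), and the quotient by $\ker(\varphi_1)\ker(\varphi_2)$ for the ``only if'' direction (the paper merely packages transitivity as an application of part (ii) with $L=G_2$ rather than writing out the pullback again). Your remark about the intertwining convention in Definition~\ref{def:isoms} is also the intended reading.
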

\begin{proof}
(\ref{prop:quasi-isoi}) Reflexivity and symmetry follow immediately from the definition. Suppose that $(G_1,\alpha_1)$, $(G_2,\alpha_2)$ and $(G_3,\alpha_3)$ are  pairs with 
$$
(G_1,\alpha_1)\isoms (G_2,\alpha_2)\text{ and }(G_2,\alpha_1)\isoms (G_3,\alpha_3).
$$ 
Transitivity will follow by showing that there is a group $G_{13}$ and surjective homomorphisms $\pi_{12} : G_{13} \to G_{12}$ and $\pi_{23} : G_{13}\to G_{23}$ having finite kernels that complete the diagram:
\begin{equation*}
\xymatrix{
&&G_{13}  \ar@{-->>}[dl]_{\pi_{12}} \ar@{-->>}[dr]^{\pi_{23}} && \\
&G_{12} \ar@{>>}[dl]_{\varphi_1} \ar@{>>}[dr]^{\varphi_2}&&G_{23} \ar@{>>}[dl]_{\psi_1} \ar@{>>}[dr]^{\psi_2}&\\
G_1&&G_2&&G_3\\
}
\end{equation*}
The existence of this group and homomorphisms is proved by applying part~(\ref{prop:quasi-isoii}) with  $H_1 = G_{12}$, $H_2 = G_{23}$ and $L = G_2$. 

(\ref{prop:quasi-isoii}) For the `if' direction, let $(H_i, \alpha_i)$ and $(L,\gamma)$ be pairs such that there are surjective homomorphisms $\psi_i : H_i\to L$, $i=1,2$, that intertwine $\alpha_i$ and $\gamma$. Put
$$
H := \left\{(x_{1},x_{2})\in H_{1}\times H_{2} \mid \psi_1(x_{1}) = \psi_2(x_{2})\right\},\ \alpha = (\alpha_1\times\alpha_2)|_H,
$$
and let $\varphi_{i} : H \to H_{i}$ be the coordinate projections. Then: $\varphi_{1}$ and $\varphi_{2}$ are surjective because $\psi_1$ and $\psi_2$ are; have $\ker\varphi_1\cong\ker\psi_2$ and $\ker\varphi_2\cong\ker\psi_1$; and complete the diagram on the left of Figure~\ref{co-commensurate}. Hence $(H_1,\alpha_1) \isoms (H_2,\alpha_2)$.

\begin{figure}[htbp]
\begin{center}
\begin{equation*}
\xymatrix{
&H  \ar@{-->>}[dr]^{\varphi_2}&&&&H  \ar@{>>}[dr]^{\varphi_2} & \\
H_1 \ar@{<<--}[ur]^{\varphi_1} \ar@{>>}[dr]_{\psi_1} && H_2  \ar@{>>}[dl]^{\psi_2} && H_1 \ar@{<<-}[ur]^{\varphi_1} \ar@{-->>}[dr]_{\psi_1} && H_2  \ar@{-->>}[dl]^{\psi_2}\\ 
& L &&&& L & \\
}
\end{equation*}\caption{An equivalent formulation of co-commensurability}
\label{co-commensurate}
\end{center}
\end{figure}
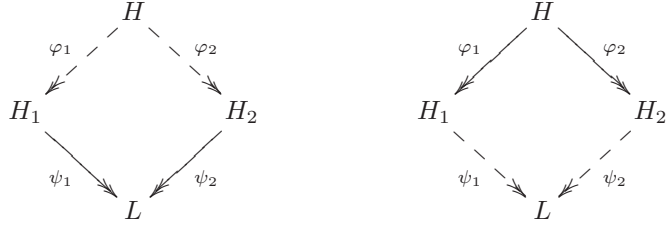

For the `only if' direction, suppose that $(H_1,\alpha_1) \isoms (H_2,\alpha_2)$ and let $(H,\alpha)$ and $\varphi_i : H\to H_i$ are a  group and homomorphisms giving effect to their co-commensurability. Put  
$$
L := H/(\ker(\varphi_1)\ker(\varphi_2)),\quad \gamma = \alpha|^{L},
$$ 
and define $\varphi_i := \rho_i\circ \eta_i$, where $\eta_i : H_i \to H/\ker(\varphi_i)$ is given by the First Isomorphism Theorem, and $\rho_i :  H/\ker(\varphi_i)\to H/(\ker(\varphi_1)\ker(\varphi_2))$ is the obvious map. Then the pair $(L,\gamma)$ and homomorphisms $\psi_i : H_i \to L$ complete the diagram on the right of Figure~\ref{co-commensurate}.
\end{proof}

One further fact, which is easily verified, is required for the proof of the Zassenhaus Lemma.
\begin{lemma}
\label{lem:prodminstable}
Let $G$ be a compact group and $\alpha\in \Aut(G)$. Suppose that $L$ and $H$ are closed, $\alpha$-stable, topologically transitive subgroups of $G$, and that $L$ is normalized by $H$. Then $LH$ is a closed,  $\alpha$-stable, topologically transitive subgroup of $G$.
\endproof
\end{lemma}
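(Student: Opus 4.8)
The plan is to dispose of the easy properties first and then reduce topological transitivity of $LH$ to a statement about its nub. Since $H$ normalises $L$, the set $LH$ is a subgroup, and it is normalised by every element of $H$ and of $L$, so $L\triangleleft LH$. As $L$ and $H$ are closed subgroups of the compact group $G$ they are compact, hence $LH$, being the image of $L\times H$ under multiplication, is compact and therefore closed; and $\alpha(LH)=\alpha(L)\alpha(H)=LH$ shows $LH$ is $\alpha$-stable. It remains to prove that $\alpha|_{LH}$ is topologically transitive, which for the compact pair $(LH,\alpha|_{LH})$ is equivalent, by Proposition~\ref{prop:nubergodic}, to the assertion $\nub{\alpha|_{LH}} = LH$.

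First I would show that the quotient $(LH)/L$ is topologically transitive. By the Second Isomorphism Theorem there is an $\alpha$-intertwining isomorphism $(LH)/L\cong H/(H\cap L)$, so it suffices to treat $H/(H\cap L)$. Now $H\cap L$ is a closed, normal, $\alpha$-stable subgroup of $H$ (normality because $H$ normalises $L$), and $H$ is topologically transitive, so $\nub{\alpha|_H}=H$. Applying Proposition~\ref{prop:ErgQuot} to the pair $(H,\alpha|_H)$ with $N=H\cap L$ gives
$$
\nub{\alpha|^{H/(H\cap L)}} = {\mathfrak q}_{H\cap L}\bigl(\nub{\alpha|_H}\bigr) = {\mathfrak q}_{H\cap L}(H) = H/(H\cap L),
$$
so $H/(H\cap L)$, and hence $(LH)/L$, is topologically transitive.

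Next I would feed this back into the pair $(LH,\alpha|_{LH})$. Applying Proposition~\ref{prop:ErgQuot} once more, now to $(LH,\alpha|_{LH})$ with the normal $\alpha$-stable subgroup $L$, gives $\nub{\alpha|^{(LH)/L}} = {\mathfrak q}_L\bigl(\nub{\alpha|_{LH}}\bigr)$. The left-hand side equals $(LH)/L$ by the previous paragraph, so ${\mathfrak q}_L\bigl(\nub{\alpha|_{LH}}\bigr) = (LH)/L$, that is, $\nub{\alpha|_{LH}}\,L = LH$. To finish I would show $L\leq \nub{\alpha|_{LH}}$: the subgroup $L$ is compact, $\alpha$-stable, and equal to its own nub, so by Corollary~\ref{cor:nub} it has no proper relatively open $\alpha$-stable subgroups; by the maximality in Corollary~\ref{cor:nubmax} (applied inside the compact group $LH$) such a subgroup is contained in $\nub{\alpha|_{LH}}$. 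Combining, $\nub{\alpha|_{LH}} = \nub{\alpha|_{LH}}\,L = LH$, as required.

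The routine parts (closedness, $\alpha$-stability, the two isomorphism-theorem identifications) carry no difficulty. The one point to be careful about is the use of Proposition~\ref{prop:ErgQuot} in the ``reverse'' direction in the last step: it is the surjectivity ${\mathfrak q}_L(\nub{\alpha|_{LH}}) = (LH)/L$ together with the containment $L\leq\nub{\alpha|_{LH}}$ --- rather than transitivity of each factor on its own --- that forces $\nub{\alpha|_{LH}}$ to be all of $LH$. This is exactly the mechanism by which an extension of one topologically transitive pair by another is again topologically transitive.
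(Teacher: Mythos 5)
Your proof is correct. The paper actually states this lemma without any proof (it is introduced as a fact "which is easily verified"), so there is no argument of the author's to compare against; the intended verification is almost certainly the following shorter one. In a compact group, the nub of an automorphism is the intersection of all open $\alpha$-stable subgroups, so by Proposition~\ref{prop:nubergodic} a closed, $\alpha$-stable subgroup is topologically transitive precisely when it has no proper open $\alpha$-stable subgroups. If $W$ is an open $\alpha$-stable subgroup of $LH$, then $W\cap L$ and $W\cap H$ are open $\alpha$-stable subgroups of $L$ and of $H$, hence equal to $L$ and to $H$ by transitivity of those pairs, so $W\supseteq LH$ and $W=LH$. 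Your longer route through Proposition~\ref{prop:ErgQuot} is also sound: the normality of $H\cap L$ in $H$, the $\alpha$-intertwining Second Isomorphism Theorem identification $(LH)/L\cong H/(H\cap L)$, the containment $L\leq\nub{\alpha|_{LH}}$ via Corollaries~\ref{cor:nub} and~\ref{cor:nubmax}, and the final combination ${\mathfrak q}_L\bigl(\nub{\alpha|_{LH}}\bigr)=(LH)/L$ forcing $\nub{\alpha|_{LH}}=LH$ all check out. What your approach buys is a more general statement worth recording: any extension of a topologically transitive pair by a topologically transitive pair is again topologically transitive, of which the lemma is the special case of an internal product.
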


The Zassenhaus Lemma may now be adapted from \cite[Lemma~3.3]{Lang:Algebra}.
\begin{lemma}
\label{lem:Zassenhaus}
Let $(G,\alpha)$ be a topologically transitive pair with finite depth. Suppose, for $i=1,2$, that $H_i$ is a closed, $\alpha$-stable subgroup of $G$ and that $L_i$ is a  closed, normal, $\alpha$-stable, topologically transitive subgroup of $H_i$. Then 
\begin{eqnarray*}
L_1(H_1\wedge L_2) \text{ is normal in  } L_1(H_1\wedge H_2),\\
(L_1\wedge H_2)L_2 \text{ is normal in  }(H_1\wedge H_2)L_2,
\end{eqnarray*}
and all groups are $\alpha$-stable and topologically transitive. Denote the automorphisms induced on the respective factor groups by $\alpha_1$ and $\alpha_2$. Then:
\begin{enumerate}
\item the factor pairs are co-commensurable, that is,
\begin{equation*}
\label{lem:Zassenhaus1}
({L_1(H_1\wedge H_2)}/{L_1(H_1\wedge L_2)},\alpha_1) \isoms ({(H_1\wedge H_2)L_2}/{(L_1\wedge H_2)L_2},\alpha_2); and
\end{equation*}
\item   are topologically transitive. \label{lem:Zassenhaus2}
\end{enumerate}
\end{lemma}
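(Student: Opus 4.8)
The plan is to transpose the classical proof of the Zassenhaus butterfly lemma (\cite[Lemma~3.3]{Lang:Algebra}) to the present setting, replacing the ordinary intersection by the nub intersection $\wedge$ and carrying topological transitivity along at each stage. The structural fact that makes this work is that, since $(G,\alpha)$ has finite depth, Lemma~\ref{lem:alphastable_subgroup} shows $\nub{\alpha|_K}$ is open, hence of finite index, in every closed $\alpha$-stable subgroup $K$; consequently replacing an intersection such as $L_1\cap H_2$ by its nub $L_1\wedge H_2$ only passes to a finite-index subgroup. This is precisely why the two factor pairs come out co-commensurable rather than isomorphic. Throughout I write $W:=H_1\wedge H_2$, and I note that monotonicity of the nub (immediate from Proposition~\ref{prop:nubergodic}, since $\nub{\alpha|_K}$ is an $\alpha$-stable ergodic subgroup of any larger $\alpha$-stable group) gives $L_1\wedge H_2,\,H_1\wedge L_2\le W$ and also $\nub{\alpha|_{W\cap L_2}}=H_1\wedge L_2$, $\nub{\alpha|_{W\cap L_1}}=L_1\wedge H_2$.

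First I would settle $\alpha$-stability and topological transitivity of the six ambient groups. Each nub intersection is $\alpha$-stable and, by Proposition~\ref{prop:nubergodic}, topologically transitive, while the $L_i$ are topologically transitive by hypothesis. Since $H_1\wedge H_2$ and $H_1\wedge L_2$ lie in $H_1$ they normalise $L_1\triangleleft H_1$, and since $H_1\wedge H_2$ and $L_1\wedge H_2$ lie in $H_2$ they normalise $L_2\triangleleft H_2$; Lemma~\ref{lem:prodminstable} then shows that each of $L_1(H_1\wedge H_2)$, $L_1(H_1\wedge L_2)$, $(H_1\wedge H_2)L_2$ and $(L_1\wedge H_2)L_2$ is closed, $\alpha$-stable and topologically transitive. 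Part~(\ref{lem:Zassenhaus2}) is then immediate, because the quotient of a topologically transitive pair is topologically transitive: the image of a dense orbit under the quotient map is dense.

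The crux is normality, and for it I would isolate the auxiliary statement: \emph{if $(W,\alpha)$ is a compact topologically transitive pair and $J\triangleleft W$ is closed and $\alpha$-stable, then $\nub{\alpha|_J}\triangleleft W$}. To prove it, recall that for the compact group $J$ the nub is the intersection of the open $\alpha$-stable subgroups of $J$, and that $\con{\alpha}$ is dense in $W$ by Proposition~\ref{prop:con_dense}; it therefore suffices to show each $h\in\con{\alpha}$ normalises every open $\alpha$-stable $U\le J$. Because $J\triangleleft W$ is profinite, the conjugates $wUw^{-1}$ $(w\in W)$ are open subgroups of $J$ of one fixed finite index, of which there are only finitely many, so the normaliser of $U$ in $W$ is a finite-index closed, hence open, subgroup of $W$. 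As $\alpha^n(h)\to\ident$, $\alpha^n(h)$ lies in this normaliser for large $n$, whence $\alpha^n(hxh^{-1})=\alpha^n(h)\alpha^n(x)\alpha^n(h)^{-1}\in U$ for $x\in\nub{\alpha|_J}$, and applying $\alpha^{-n}$ (using $\alpha(U)=U$) gives $hxh^{-1}\in U$. Applying this with $W=H_1\wedge H_2$ and $J=(H_1\wedge H_2)\cap L_2$, and using $\nub{\alpha|_{(H_1\wedge H_2)\cap L_2}}=H_1\wedge L_2$, yields $H_1\wedge L_2\triangleleft H_1\wedge H_2$, and symmetrically $L_1\wedge H_2\triangleleft H_1\wedge H_2$. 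The butterfly conjugation calculation then runs unchanged: conjugation by $w\in H_1\wedge H_2\subseteq H_1$ preserves $L_1$ and $H_1\wedge L_2$, while for $l\in L_1$ one has $lxl^{-1}=[l,x]x$ with $[l,x]\in L_1$ for $x\in H_1\wedge L_2\subseteq H_1$, so $l(H_1\wedge L_2)l^{-1}\subseteq L_1(H_1\wedge L_2)$; this gives $L_1(H_1\wedge L_2)\triangleleft L_1(H_1\wedge H_2)$, and symmetrically the second normality claim.

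Finally, for co-commensurability I would realise both factor pairs as quotients of $W=H_1\wedge H_2$. Since $L_1(H_1\wedge H_2)=L_1W$ and $L_1(H_1\wedge L_2)$ is normal in it, the inclusion $W\hookrightarrow L_1(H_1\wedge H_2)$ induces a surjection onto the first factor with kernel $N_A:=W\cap L_1(H_1\wedge L_2)$; symmetrically $W$ surjects onto the second factor with kernel $N_B:=W\cap(L_1\wedge H_2)L_2$, and $N_A,N_B$ are normal and $\alpha$-stable in $W$. Writing $D:=(L_1\wedge H_2)(H_1\wedge L_2)$, a normal $\alpha$-stable subgroup of $W$ as a product of two such, a short coset computation (decomposing an element of $N_A$ as $ly$ with $y\in H_1\wedge L_2$ and observing $l\in L_1\cap H_2$, and dually for $N_B$) shows $D\le N_A$ and $D\le N_B$, with the two indices bounded by $[L_1\cap H_2:L_1\wedge H_2]$ and $[H_1\cap L_2:H_1\wedge L_2]$ respectively, both finite because the nubs are open. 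Hence $N_A$ and $N_B$ are commensurable, so $N_A\cap N_B$ is normal, $\alpha$-stable and of finite index in each, and the pair $(W/(N_A\cap N_B),\bar\alpha)$ maps onto both factor pairs with finite kernels $N_A/(N_A\cap N_B)$ and $N_B/(N_A\cap N_B)$; this is exactly co-commensurability in the sense of Definition~\ref{def:isoms} and Proposition~\ref{prop:quasi-iso}. The main obstacle is the auxiliary normality lemma of the third paragraph — that the nub of a closed normal $\alpha$-stable subgroup is again normal in a topologically transitive pair — since the remaining steps are either the classical butterfly manipulation or elementary finite-index bookkeeping made possible by Lemma~\ref{lem:alphastable_subgroup}.
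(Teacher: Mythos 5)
Your proof is correct in substance but follows a genuinely different route from the paper's at the two key points. For normality, the paper simply observes that $L_1(H_1\wedge L_2)$ is the largest $\alpha$-stable, topologically transitive subgroup of $L_1(H_1\cap L_2)$ and that the latter is normalized by $L_1(H_1\wedge H_2)$ by the classical Zassenhaus lemma, leaving implicit the fact that conjugation by elements of an $\alpha$-stable overgroup preserves such a ``largest'' subgroup; your auxiliary lemma (the nub of a closed, normal, $\alpha$-stable subgroup of a compact topologically transitive pair is again normal, proved via density of $\con{\alpha}$ and openness of normalizers) supplies exactly the argument the paper elides, and is the more careful treatment. For co-commensurability, the paper starts from the classical isomorphism $L_1(H_1\cap H_2)/L_1(H_1\cap L_2)\cong (H_1\cap H_2)L_2/(L_1\cap H_2)L_2$, computes the nub of each side via the Second Isomorphism Theorem, and exhibits this common nub as a common finite-kernel \emph{quotient} of the two wedge-factors (the ``only if'' form of Proposition~\ref{prop:quasi-iso}); you instead realize both wedge-factors as quotients $W/N_A$ and $W/N_B$ of $W=H_1\wedge H_2$, show $N_A$ and $N_B$ are commensurable by squeezing both over $D=(L_1\wedge H_2)(H_1\wedge L_2)$, and use $W/(N_A\cap N_B)$ as a common finite-kernel \emph{cover}, which meets Definition~\ref{def:isoms} directly. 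Both routes rest on the same pivot --- finite depth makes every nub open, so replacing $\cap$ by $\wedge$ costs only finite index --- and both are valid.

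One justification in your auxiliary lemma is wrong as stated: a profinite group need not have only finitely many open subgroups of a given index ($C_2^{\mathbb{Z}}$ has infinitely many of index $2$), so you cannot conclude on those grounds that the orbit $\{wUw^{-1}\}_{w\in W}$ is finite. The conclusion you need --- that the normalizer of $U$ in $W$ is open --- is nevertheless true: choose an open normal subgroup $V\triangleleft W$ with $V\cap J\leq U$ and set $K=V\cap J$, an open subgroup of $J$ that is normal in $W$; the conjugation action of $W$ on the finite group $J/K$ is continuous, so the stabilizer of the subgroup $U/K$, which is exactly the normalizer of $U$, is open. With this repair the rest of your argument goes through unchanged.
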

\begin{proof}
Lemma~\ref{lem:prodminstable} implies that all groups are $\alpha$-stable and topologically transitive. The group $L_1(H_1\wedge L_2)$ is the largest $\alpha$-stable, topologically transitive subgroup of $L_1(H_1\cap L_2)$. Since, by \cite[Lemma~3.3]{Lang:Algebra}, the latter group is normalized by $L_1(H_1\wedge H_2)$, which is also $\alpha$-stable, it follows that $L_1(H_1\wedge L_2)$ is normalized by $L_1(H_1\wedge H_2)$ as claimed. That $(L_1\wedge H_2)L_2$ is normal in  $(H_1\wedge H_2)L_2$ may be seen similarly. 
Proposition~\ref{prop:ErgQuot} implies that $L_1(H_1\wedge H_2)/L_1(H_1\wedge L_2)$ and $(H_1\wedge H_2)L_2/(L_1\wedge H_2)L_2$ are topologically transitive as well, so that (\ref{lem:Zassenhaus2}) holds.

It remains to prove (\ref{lem:Zassenhaus1}), and this will be deduced from the isomorphism
\begin{equation}
\label{eq:Zassenhaus}
\textstyle{\frac{L_1(H_1\cap H_2)}{L_1(H_1\cap L_2)}} \cong \textstyle{\frac{(H_1\cap H_2)L_2}{(L_1\cap H_2)L_2}},
\end{equation}
which is shown in \cite[Lemma~3.3]{Lang:Algebra}.

Since each open $\alpha$-stable subgroup of $L_1(H_1\cap H_2)/L_1(H_1\cap L_2)$ pulls back to an open $\alpha$-stable subgroup of $L_1(H_1\cap H_2)$, it follows that
$$
\nub{\textstyle{\frac{L_1(H_1\cap H_2)}{L_1(H_1\cap L_2)} }}  = \textstyle{ \frac{(L_1(H_1\wedge H_2))(L_1(H_1\cap L_2))}{L_1(H_1\cap L_2)}},
$$
and hence, by the Second Isomorphism Theorem, that
$$
\nub{\textstyle{\frac{L_1(H_1\cap H_2)}{L_1(H_1\cap L_2)} }}  \cong  \textstyle{\frac{L_1(H_1\wedge H_2)}{L_1(H_1\wedge H_2)\cap L_1(H_1\cap L_2)}}.
$$
Applying the same argument to the right hand side of (\ref{eq:Zassenhaus}) yields that 
$$
\nub{\textstyle{\frac{(H_1\cap H_2)L_2}{L_1\cap H_2)L_2} }}  \cong  \textstyle{\frac{(H_1\wedge H_2)L_2}{(H_1\wedge H_2)L_2\cap (L_1\cap H_2)L_2}},
$$
whence, from  (\ref{eq:Zassenhaus}),
$$
\textstyle{\frac{L_1(H_1\wedge H_2)}{L_1(H_1\wedge H_2)\cap L_1(H_1\cap L_2)} }\cong \textstyle{\frac{(H_1\wedge H_2)L_2}{(H_1\wedge H_2)L_2\cap (L_1\cap H_2)L_2}}.
$$
Since $L_1(H_1\wedge L_2)$ has finite index in $L_1(H_1\cap L_2)$ and is contained in $L_1(H_1\wedge H_2)$, the left side of this equation is a quotient of $L_1(H_1\wedge H_2)/L_1(H_1\wedge L_2)$ by a finite, normal, $\alpha$-stable subgroup.  Similarly, the right side is a quotient of $(H_1\wedge H_2)L_2/(L_1\wedge H_2)L_2$ by a finite  subgroup. Therefore, by Proposition~\ref{prop:quasi-iso}, 
\begin{equation*}
\textstyle{\frac{L_1(H_1\wedge H_2)}{L_1(H_1\wedge L_2)}} \isoms \textstyle{\frac{(H_1\wedge H_2)L_2}{(L_1\wedge H_2)L_2}}.
\end{equation*}
\end{proof}

Versions of the Schreier Refinement Theorem, \cite[Theorem~3.4]{Lang:Algebra}, and the Jordan-H\"older Theorem, \cite[Theorem~3.5]{Lang:Algebra}, for topologically transitive pairs with finite depth follow from Lemma~\ref{lem:Zassenhaus} exactly as they do for finite groups from the Zassenhaus Lemma. Some terminology is needed in order to state these theorems.

\begin{definition}
\label{defn:subnormal_minstable}
Let $(G,\alpha)$ be topologically transitive. 
\begin{enumerate}
\item \label{defn:subnormal_minstablei}
An increasing sequence,
\begin{equation*}
\label{eq:subnormal_minstable}
\triv = G_0 \leq G_1 \leq \cdots \leq G_{r-1} \leq G_r = G,
\end{equation*}
 of $\alpha$-stable, topologically transitive subgroups of $G$ is a  \emph{subnormal series for $(G,\alpha)$} if $G_{j-1}\triangleleft G_{j}$ and $\alpha|_{G_j}^{G_j/G_{j-1}}$ is topologically transitive on ${G_j}^{G_j/G_{j-1}}$ for each $j\in\{1,2,\dots,r\}$. The \emph{$j$th subquotient} of the series is the pair $(G_j/G_{j-1},\alpha_j)$, where $\alpha_j = \alpha|_{G_j}^{G_j/G_{j-1}}$. 
 \item Two subnormal series 
\begin{align*}
\triv = G_0 \triangleleft G_1 &\triangleleft \cdots \triangleleft G_{r-1} \triangleleft G_r = G,\\
\triv = H_0 \triangleleft H_1 &\triangleleft \cdots \triangleleft H_{s-1} \triangleleft H_s = G,
\end{align*}
of $\alpha$-stable, topologically transitive subgroups of $G$ are \emph{equivalent} if $r=s$ and there is a permutation, $\pi$, of $\{1,2,\dots,r\}$ such that 
$$
(G_{j}/G_{j-1},\alpha_j) \isoms (H_{\pi(j)}/H_{\pi(j)-1},\alpha_{\pi(j)})\text{ for each }j.
$$ 
\item A \emph{refinement} of a subnormal series is subnormal series in which the same subgroups and possibly some others appear.
\item A \emph{composition series} for $(G,\alpha)$ is a subnormal series, as in (\ref{defn:subnormal_minstablei}), for which each subquotient $(G_j/G_{j-1},\alpha_j)$ is an irreducible pair. 
\end{enumerate}
\end{definition}

\begin{theorem}
\label{thm:Schreier}
Let $(G,\alpha)$ be a topologically transitive pair with finite depth. Then any two subnormal series of $\alpha$-stable, topologically transitive subgroups of $G$ have equivalent refinements. \endproof
\end{theorem}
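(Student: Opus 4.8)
The plan is to imitate the classical deduction of the Schreier Refinement Theorem from the Zassenhaus Lemma, using the nub intersection $\wedge$ in place of ordinary intersection and co-commensurability $\isoms$ in place of isomorphism; the essential work has already been done in Lemma~\ref{lem:Zassenhaus}. Write the two given subnormal series as
$$
\triv = G_0 \triangleleft G_1 \triangleleft \cdots \triangleleft G_r = G \quad\text{and}\quad \triv = H_0 \triangleleft H_1 \triangleleft \cdots \triangleleft H_s = G,
$$
each consisting of $\alpha$-stable, topologically transitive subgroups. I would insert refinement terms between consecutive members of each series by setting
$$
G_{i,j} := G_i\,(G_{i+1}\wedge H_j),\quad (0\le i<r,\ 0\le j\le s), \qquad H_{j,i} := H_j\,(H_{j+1}\wedge G_i),\quad (0\le j<s,\ 0\le i\le r).
$$

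Next I would verify that these terms genuinely interpolate the originals. At $j=0$ one has $G_{i+1}\wedge H_0 = \nub{\alpha|_{G_{i+1}\cap\triv}} = \triv$, so $G_{i,0}=G_i$; at $j=s$ one has $G_{i+1}\wedge H_s = \nub{\alpha|_{G_{i+1}}} = G_{i+1}$, because $G_{i+1}$ is topologically transitive and hence equals its own nub by Proposition~\ref{prop:nubergodic}, whence $G_{i,s}=G_iG_{i+1}=G_{i+1}$. Monotonicity of the nub under inclusion of the ambient group shows that the $G_{i,j}$ increase with $j$, so the refinement of the first series runs
$$
\triv = G_{0,0}\triangleleft G_{0,1}\triangleleft\cdots\triangleleft G_{0,s}=G_1=G_{1,0}\triangleleft\cdots\triangleleft G_{r-1,s}=G,
$$
and symmetrically for the $H_{j,i}$.

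The crucial step is to apply Lemma~\ref{lem:Zassenhaus} to each cell with $H_1=G_{i+1}$, $L_1=G_i$, $H_2=H_{j+1}$ and $L_2=H_j$; the hypotheses hold because $G_i\triangleleft G_{i+1}$ and $H_j\triangleleft H_{j+1}$ are topologically transitive. The lemma directly supplies everything needed: the four groups $G_{i,j}$, $G_{i,j+1}$, $H_{j,i}$, $H_{j,i+1}$ are $\alpha$-stable and topologically transitive, $G_{i,j}\triangleleft G_{i,j+1}$ and $H_{j,i}\triangleleft H_{j,i+1}$, the subquotients $(G_{i,j+1}/G_{i,j},\alpha_1)$ and $(H_{j,i+1}/H_{j,i},\alpha_2)$ are topologically transitive, and they are co-commensurable,
$$
(G_{i,j+1}/G_{i,j},\alpha_1)\isoms(H_{j,i+1}/H_{j,i},\alpha_2).
$$
Thus both refinements are subnormal series of $\alpha$-stable, topologically transitive subgroups, each with $rs$ subquotients, and the ``transpose'' bijection $(i,j)\leftrightarrow(j,i)$ yields the permutation $\pi$ of Definition~\ref{defn:subnormal_minstable} pairing the subquotients co-commensurably. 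The two refinements are therefore equivalent.

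Since Lemma~\ref{lem:Zassenhaus} carries the analytic burden, the only points requiring care are the endpoint identities $G_{i,0}=G_i$ and $G_{i,s}=G_{i+1}$, which make the constructed series honest refinements of the originals and rest on the identity $\nub{\alpha|_K}=K$ for topologically transitive $K$ together with monotonicity of the nub, and the appeal to Proposition~\ref{prop:quasi-iso}(\ref{prop:quasi-isoi}) that $\isoms$ is an equivalence relation, which makes ``equivalent'' in Definition~\ref{defn:subnormal_minstable} well defined and transitive. I anticipate no genuinely hard step remaining beyond the bookkeeping of indices.
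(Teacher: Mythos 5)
Your proposal is correct and is precisely the argument the paper intends: the paper gives no written proof, stating only that the theorem follows from Lemma~\ref{lem:Zassenhaus} exactly as the classical Schreier Refinement Theorem follows from the Zassenhaus Lemma, and your write-up carries out that deduction with the nub intersection and co-commensurability in place of intersection and isomorphism. The endpoint identities, the monotonicity check, and the transpose pairing are exactly the bookkeeping the paper leaves to the reader.
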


\begin{theorem}
\label{thm:Jordan-Holder}
Let $(G,\alpha)$ be a topologically transitive pair with finite depth. Then~$G$ has a composition series. Any two composition series for $(G,\alpha)$ are equivalent.
\endproof
\end{theorem}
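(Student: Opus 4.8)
The plan is to obtain Theorem~\ref{thm:Jordan-Holder} from the Schreier Refinement Theorem (Theorem~\ref{thm:Schreier}) by the same formal argument that derives the classical Jordan--H\"older Theorem from its refinement theorem, the only genuinely new ingredient being the treatment of finite subquotients. Two things must be shown: that a composition series exists, and that any two composition series are equivalent in the sense of Definition~\ref{defn:subnormal_minstable}. Throughout I would use the fact that an extension of one topologically transitive pair by another is again topologically transitive: if $G_j\triangleleft H$ are closed, normal and $\alpha$-stable with both $(G_j,\alpha|_{G_j})$ and $(H/G_j,\alpha|^{H/G_j})$ topologically transitive, then Proposition~\ref{prop:ErgQuot} gives $H/G_j = \nub{\alpha|^{H/G_j}} = {\mathfrak q}_{G_j}(\nub{\alpha|_H})$, so that $\nub{\alpha|_H}\,G_j = H$, while the maximality in Proposition~\ref{prop:nubergodic} gives $G_j\leq\nub{\alpha|_H}$; hence $\nub{\alpha|_H}=H$ and $(H,\alpha|_H)$ is topologically transitive.

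For existence I would refine the $\alpha$-stable series furnished by Proposition~\ref{prop:opennormal}, whose $j$th factor is a surjective image $\varphi_j(F_j^{\ZZ})$ of a shift on a finite group $F_j$ with $\ker(\varphi_j)\cap F_j^{[\ZZ]}=\triv$. Fixing a composition series $\triv = E_{j,0}\triangleleft\cdots\triangleleft E_{j,m_j}=F_j$ of the finite group $F_j$, its terms give the $\sigma$-stable chain of shifts $E_{j,0}^{\ZZ}\triangleleft\cdots\triangleleft E_{j,m_j}^{\ZZ}=F_j^{\ZZ}$, with successive factors isomorphic to $(S^{\ZZ},\sigma)$ for the simple groups $S=E_{j,i+1}/E_{j,i}$. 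Pushing this chain forward by $\varphi_j$ and pulling back to $G$ interpolates subgroups $G_j=G_{j,0}\triangleleft\cdots\triangleleft G_{j,m_j}=G_{j+1}$; each $G_{j,i}$ is topologically transitive by the extension fact above, and each factor $G_{j,i+1}/G_{j,i}$ is a surjective image of $(S^{\ZZ},\sigma)$ whose kernel is finite since $\ker(\varphi_j)$ is finite by Lemma~\ref{lem:kernel}. A finite quotient of an irreducible pair is again irreducible, because its proper closed normal $\alpha$-stable subgroups pull back to finite ones; hence each interpolated factor is irreducible by Proposition~\ref{prop:simplecomp}. Concatenating over $j$ produces a composition series.

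For uniqueness I would apply Theorem~\ref{thm:Schreier} to two given composition series to obtain equivalent refinements, and then show that refining a composition series introduces nothing new beyond factors co-commensurable to the trivial pair. Indeed, suppose a subnormal series inserts a closed, normal, $\alpha$-stable, topologically transitive $M$ with $G_{j-1}\triangleleft M\triangleleft G_j$ inside an irreducible factor $G_j/G_{j-1}$. Then $M/G_{j-1}$ is a proper, closed, normal, $\alpha$-stable subgroup of the irreducible pair $G_j/G_{j-1}$, hence finite by Definition~\ref{defn:simplecomp}; thus $(M/G_{j-1},\alpha)$ is co-commensurable to the trivial pair, while the surjection $G_j/G_{j-1}\to G_j/M$ has finite kernel $M/G_{j-1}$, so $(G_j/M,\alpha)\isoms(G_j/G_{j-1},\alpha)$ by Definition~\ref{def:isoms}. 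Consequently the subquotients of any refinement of a composition series are, up to $\isoms$, the original irreducible subquotients together with finitely many factors co-commensurable to the trivial pair. Deleting the latter from the two equivalent refinements shows that the two composition series have the same irreducible subquotients up to permutation and $\isoms$, which by Remark~\ref{rem:qiso-simple} is the same as up to permutation and isomorphism.

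The main obstacle I anticipate is precisely the bookkeeping of these finite subquotients. In the finite-group case the factors discarded during a refinement are trivial, whereas here they are merely finite; the definition of co-commensurability in Definition~\ref{def:isoms} is tailored so that finite pairs collapse onto the trivial pair while the complementary factor survives unchanged up to $\isoms$, and the delicate point is the verification above that an inserted topologically transitive term can only split an irreducible factor into a finite piece and a co-commensurable copy of that factor. A secondary point requiring care is that the length matching demanded by Definition~\ref{defn:subnormal_minstable} is preserved when the trivial factors are deleted; this holds because equivalent refinements contain equally many factors in each $\isoms$-class, and hence equally many non-trivial ones.
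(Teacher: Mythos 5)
Your proposal is correct and follows the route the paper itself intends: the paper gives no explicit proof, stating only that the theorem follows from the Zassenhaus Lemma via the Schreier Refinement Theorem exactly as for finite groups, and your uniqueness argument is precisely that derivation, while your existence argument refines the series of Proposition~\ref{prop:opennormal} in the natural way the paper leaves to the reader. One small simplification: since a finite non-trivial pair is never topologically transitive, Proposition~\ref{prop:ErgQuot} forces any term inserted into an irreducible factor of a subnormal series in the sense of Definition~\ref{defn:subnormal_minstable} to coincide with one of the two adjacent terms, so the factors you discard are literally trivial rather than merely co-commensurable to the trivial pair, and your bookkeeping of finite subquotients becomes vacuous.
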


\begin{remark}
\label{rem:Jordan-Holder}
{\it (a)\/} Remark~\ref{rem:qiso-simple} implies that the sub-quotients appearing in any two composition series for $(G,\alpha)$ are in fact, up to a permutation, isomorphic. \label{rem:Jordan-Holderi}\\
{\it (b)\/} Suppose that $(G_1,\alpha_1)$ and $(G_2,\alpha_2)$ are co-commensurable and have finite depth, and let $(G,\alpha)$ and the  homomorphisms $\varphi_i : G\to G_i$ be as in Definition~\ref{def:isoms}. Let 
\begin{equation*}
\label{eq:subnormal_minstable2}
\triv = G_{1,0} \triangleleft G_{1,1} \triangleleft \cdots \triangleleft G_{1,{r-1}} \leq G_{1,r} = G_1,
\end{equation*}
be a composition series for $(G_1,\alpha_1)$.  Then, setting $G_{2,j}$ to be the nub of the restriction of $\alpha_2$ to $\varphi_2\circ \varphi_1^{-1}(G_{1,j})$, 
\begin{equation*}
\label{eq:subnormal_minstable3}
\triv = G_{2,0} \triangleleft G_{2,1} \triangleleft \cdots \triangleleft G_{2,{r-1}} \leq G_{2,r} = G_2,
\end{equation*}
is a composition series for $(G_2,\alpha_2)$ and 
$$
(G_{1,j}/G_{1,j-1},\alpha_{1,j})\cong (G_{2,j}/G_{2,j-1},\alpha_{2,j})
$$ 
for each $j$. Example~\ref{ex:finite_centre} exhibits infinitely many pairs, $(G_n,\sigma\times\sigma)$, $n\in\mathbb{Z}$, having composition series of length $r=2$ and having composition factors $(C_2^{\mathbb{Z}},\sigma)$ and $(C_3^{\mathbb{Z}},\sigma)$ but which are not isomorphic because they are semidirect products with different actions of $(C_2^{\mathbb{Z}},\sigma)$ by automorphisms on $(C_3^{\mathbb{Z}},\sigma)$.
\end{remark}

\begin{example}
\label{ex:intersection_not_ergodic}
The group $G = C_2^{\mathbb{Z}}\times C_2^{\mathbb{Z}}$ with $\alpha = \sigma\times\sigma$ is clearly the internal direct product of $\alpha$-stable subgroups 
\begin{equation}
\label{eq:product1}
H_1 = C_2^{\mathbb{Z}}\times \triv\  \hbox{ and } \ H_2 = \triv\times C_2^{\mathbb{Z}}.
\end{equation}
However $(G,\alpha)$ is a product of $\alpha$-stable subgroups in many other ways.

Define $H_\varphi$ to be the subgroup 
$$
H_\varphi = \left\{ (\varphi(x), x)\in G \mid x\in C_2^{\mathbb{Z}}\right\},
$$ 
where $\varphi : C_2^{\mathbb{Z}}\to C_2^{\mathbb{Z}}$ is a surjective homomorphism. For example, $\varphi$ might be the homomorphism defined in Corollary~\ref{cor:not_complemented}. Then $H_\varphi$ is a closed $\alpha$-stable subgroup of~$G$ that isomorphic to $C_2^{\mathbb{Z}}$. 

If $(x,y)\in H_1\cap H_\varphi$, then $y=\ident$ because $(x,y)\in H_1$, which forces $x=\varphi(y) =\ident$ because $(x,y)\in H_\varphi$. Hence $H_1\cap H_\varphi = \triv$ and $G = H_1\times H_\varphi$. On the other hand, $(0,y)\in H_2\cap H_\varphi$ if and only if $y\in \ker\varphi$. Hence, although $G = H_2 H_\varphi$, it is not the direct product of $H_2$ and $H_\varphi$.

That $H_2\cap H_\varphi = \triv\times \ker\varphi$ shows that the intersection of topologically transitive $\alpha$-stable subgroups need not be topologically transitive. Note however that $\ker\varphi$ is a finite subgroup of $C_2^{\mathbb{Z}}$ and so $H_2\wedge H_\varphi = \triv$. 
\end{example}

\begin{example}
\label{ex:finite_centre}
The symmetric group $S_3$ is isomorphic to the semidirect product $C_3\rtimes_\chi C_2$ determined by the homomorphism $\chi : C_2\to \Aut(C_3)$ that sends $\bar{1}\in C_2$ to the automorphism $\bar{1}\mapsto \bar{2}$ of $C_3$. The group $S_3^{\mathbb{Z}}$ is consequently isomorphic to the semidirect product $C_3^{\mathbb{Z}}\rtimes_{\chi_0} C_2^{\mathbb{Z}}$ determined by the homomorphism $\chi_0 : C_2^{\mathbb{Z}}\to \Aut(C_3^{\mathbb{Z}})$ that sends $[\bar{1}]_s\in C_2^{\mathbb{Z}}$ to the automorphism such that
$$
\chi_0([\bar{1}]_s)(f)(r) =
\begin{cases}
\chi(f(r)), & \hbox{ if } r=s\\
f(r), & \hbox{ otherwise}
\end{cases}, \ \ (f\in C_3^{\mathbb{Z}}).
$$

Let $(\varprojlim C_2^{\mathbb{Z}},\tilde\sigma)$ be the pair defined in Example~\ref{ex:con_2-ways_not_dense} and $\varphi_1: {\varprojlim  C_2^{\mathbb{Z}}} \to C_2^{\mathbb{Z}}$ be the homomorphism also defined there. Then $\chi_0\circ \varphi_1 : {\varprojlim  C_2^{\mathbb{Z}}} \to \Aut(C_3^{\mathbb{Z}})$ is a homomorphism that commutes with the automorphisms $\sigma\in \Aut(C_3^{\mathbb{Z}})$ and $\tilde\sigma \in \Aut({\varprojlim  C_2^{\mathbb{Z}}})$. Put 
$$
G = C_3^{\mathbb{Z}}\rtimes_{\chi_0\circ \varphi_1} {\varprojlim  C_2^{\mathbb{Z}}}\hbox{ and }\alpha = (\sigma,\tilde\sigma).
$$
Identify $C_3^{\mathbb{Z}}$ and ${\varprojlim  C_2^{\mathbb{Z}}}$ with the obvious subgroups of $G$. Then $(G,\alpha)$ is a compact pair with $\bcg{G} = C_3^{\mathbb{Z}}$ and $Z(G) = \ker \varphi_1$. Hence $Z(G)\bcg{G}$ is a proper subgroup of~$G$.

The group $G$ may also be described directly as an inverse limit, as follows. For each $n\in \mathbb{N}$ let $A_n \subset \mathbb{Z}$ be the support of $\varphi^n([\bar{1}]_{0})$, where $\varphi : C_2^{\mathbb{Z}}\to C_2^{\mathbb{Z}}$ is defined by
$$
\varphi(g)(s) = g(s)-_2g(s+1).
$$
Define $\chi_n : C_2^{\mathbb{Z}} \to \Aut(C_3^{\mathbb{Z}})$ to be the homomorphism that sends $[\bar{1}]_s\in C_2^{\mathbb{Z}}$ to
\begin{equation}
\label{eq:centreexample}
\chi_n([\bar{1}]_s)(f)(r) =
\begin{cases}
\chi(f(r)), & \hbox{ if } r-s\in A_n\\
f(r), & \hbox{ otherwise} 
\end{cases}, \ \ (f\in C_3^{\mathbb{Z}})
\end{equation}
and put $G_n = C_3^{\mathbb{Z}}\rtimes_{\chi_n} C_2^{\mathbb{Z}}$. Then calculation shows that $Z(G_n)$ is the subgroup of $C_2^{\mathbb{Z}}$ consisting of $2^n$-periodic elements. Let $\varphi_{n+1,n} : G_{n+1}\to G_n$ be defined at $(f,g)$ in $C_3^{\mathbb{Z}}\rtimes_{\chi_n} C_2^{\mathbb{Z}}$ by
$$
\varphi_{n+1,n}(f,g) = (f,\phi(g)). 
$$
The definition of the set $A$ in (\ref{eq:centreexample}) implies that $\varphi_{n+1,n}$ is a homomorphism. Composing produces $\varphi_{m,n} : G_m\to G_n$ $(m\geq n)$ and an inverse system $(G_n, \varphi_{m,n})$. Then $G \cong {\varprojlim  (G_n,\varphi_{m,n})}$. 
\end{example}

\section{Topologically Transitive Automorphisms}
\label{sec:alphastable}

In his work on simple subgroups of automorphism groups of trees, \cite{Tits2}, J.~Tits introduced a property that he called (P). This property, which is defined in terms of the action on the tree, has been important in subsequent papers on automorphism groups of trees, see \cite[Section~3]{CapracedeMedts}, and has been extended to groups of automorphisms of complexes in \cite{HaglundPaulin}. One of the important consequences of this property is found in \cite[Lemme~4.3]{Tits2} and the next proposition is an abstract version of this result. In Tits' formulation of his Lemme~4.3, property~(P) asserts that the pair $(\nub{\alpha},\alpha)$ is isomorphic to a shift over an infinite group. The hypothesis of the proposition is therefore considerably weaker. 

\begin{proposition}
\label{prop:GenTits}
Let $(G,\alpha)$ be a compact pair with $\alpha$ topologically transitive. Then for every $k\in \ZZ\setminus\{0\}$ the map $\eta_k : x\mapsto x^{-1}\alpha^k(x)$ is surjective. 
\end{proposition}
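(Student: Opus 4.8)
The plan is to show that the image $\eta_k(G)$ is a closed subset of $G$ that contains a dense subgroup, namely the contraction group $\con{\alpha^k}$, and to conclude that $\eta_k(G)=G$. First I would reduce to the case $k>0$. A bi-infinite orbit $\{\alpha^n(y)\mid n\in\ZZ\}$ of $\alpha$ coincides as a set with the corresponding orbit of $\alpha^{-1}$, so $(G,\alpha^{-1})$ is topologically transitive whenever $(G,\alpha)$ is; moreover for $\ell>0$ the map $x\mapsto x^{-1}(\alpha^{-1})^{\ell}(x)$ is precisely $\eta_{-\ell}$ for $\alpha$. Hence the case $k<0$ follows from the case $k>0$ applied to $\alpha^{-1}$, and I may assume $k>0$ throughout. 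Since $\eta_k$ is continuous and $G$ is compact, $\eta_k(G)$ is compact, and therefore closed in the Hausdorff group $G$.

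The heart of the argument is the containment $\con{\alpha^k}\subseteq \eta_k(G)$, which I would prove by an explicit telescoping construction. Fix $g\in\con{\alpha^k}$, so that $\alpha^{kn}(g)\to\ident_G$ as $n\to\infty$, and set $P_N\dfn g\,\alpha^k(g)\,\alpha^{2k}(g)\cdots\alpha^{k(N-1)}(g)$. Recalling that $G$ is compact and totally disconnected, hence profinite with a base of open normal subgroups (Subsection~\ref{sec:minimizing,scale}), I would check that $(P_N)$ is Cauchy: for any open normal $\mathcal U\leq G$ there is $N_0$ with $\alpha^{kn}(g)\in\mathcal U$ for $n\geq N_0$, and then for $N\geq M\geq N_0$ the element $P_M^{-1}P_N=\alpha^{kM}(g)\cdots\alpha^{k(N-1)}(g)$ lies in $\mathcal U$, so the images of $P_N$ stabilize in every finite quotient $G/\mathcal U$. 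Thus $P_N\to P$ for some $P\in G$. Passing to the limit in the identity $P_{N+1}=g\,\alpha^k(P_N)$ gives $P=g\,\alpha^k(P)$, and setting $x\dfn P^{-1}$ yields $\eta_k(x)=x^{-1}\alpha^k(x)=g$, as required.

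It remains to see that $\con{\alpha^k}$ is dense in $G$. For $k>0$ the sequence $\{\alpha^{kn}(x)\}_{n\in\NN}$ is a subsequence of $\{\alpha^{n}(x)\}_{n\in\NN}$, so $\con{\alpha}\subseteq\con{\alpha^k}$; and $\con{\alpha}$ is dense in $G$ by Proposition~\ref{prop:con_dense}(\ref{prop:con_densei}), since $(G,\alpha)$ is compact and topologically transitive. Hence $\con{\alpha^k}$ is dense. As $\eta_k(G)$ is closed and contains the dense set $\con{\alpha^k}$, it follows that $\eta_k(G)=G$, which proves surjectivity.

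I expect the main obstacle to be the convergence step: the construction of a preimage rests on the infinite product $\prod_{n\geq 0}\alpha^{kn}(g)$ converging, which is exactly where the profinite (compact totally disconnected) structure is used, together with the telescoping identity $P_{N+1}=g\,\alpha^k(P_N)$. Once convergence is secured, the inclusion $\con{\alpha^k}\subseteq\eta_k(G)$ and the density input from Proposition~\ref{prop:con_dense} combine routinely to give the result.
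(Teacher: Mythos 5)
Your proof is correct, but it follows a genuinely different route from the paper's. The paper argues structurally: it first solves $\eta_k(x)=f$ explicitly on a shift $(F^{\mathbb{Z}},\sigma)$ (essentially Tits' original argument), then propagates surjectivity up a normal series with shift factors to get the finite depth case, and finally passes to the general topologically transitive case by observing that $\eta_k$ is surjective modulo every open normal subgroup, so $\eta_k(G)$ is dense, and closed by compactness. You replace the entire structure-theoretic core with the single containment $\con{\alpha^k}\subseteq\eta_k(G)$, obtained by solving the twisted coboundary equation via the convergent infinite product $P=\prod_{n\geq 0}\alpha^{kn}(g)$ (your telescoping identity $P=g\,\alpha^k(P)$ and the computation $\eta_k(P^{-1})=P\,\alpha^k(P)^{-1}=g$ are correct, and the Cauchy argument is sound because the base of open normal subgroups in a profinite group makes long products of small elements small). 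Combined with $\con{\alpha}\subseteq\con{\alpha^k}$, the density of $\con{\alpha}$ from Proposition~\ref{prop:con_dense}(\ref{prop:con_densei}), and the same closedness-of-image endgame, this gives the result without invoking Proposition~\ref{prop:opennormal} or the finite-depth reduction at all. What your approach buys is brevity and a slightly stronger intermediate fact -- $\con{\alpha^k}\subseteq\eta_k(G)$ holds for any compact totally disconnected pair, with transitivity used only to supply density -- at the cost of leaning on the nontrivial Proposition~\ref{prop:con_dense} (which is proved earlier in the paper, so there is no circularity); the paper's route is longer but keeps the connection to Tits' Lemme~4.3 and to the composition-series picture explicit. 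Your reduction to $k>0$ via $\alpha^{-1}$ is also fine, since $\alpha$ and $\alpha^{-1}$ have the same two-sided orbits.
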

\begin{proof}
The proof is in three steps, the first of which is the case when $(G,\alpha)$ is a shift. The argument here is essentially the same as that in \cite[Lemme~4.3]{Tits2}. Suppose that $(G,\alpha)\cong (F^{\mathbb{Z}},\sigma)$ and let $f\in F^{\mathbb{Z}}$. Assume, without loss of generality, that $k>0$ and define $x\in F^{\mathbb{Z}}$ by $x(n) = \ident_F$ if $n\in \{0,1,\dots,k-1\}$ and then define $x$ recursively by $x(n) =x(n+k) f(n)^{-1}$ if $n<0$ and $x(n) = x(n-k)f(n-k)$ if $n\geq k$. Then $f = \eta_k(x)$.
\par

Next suppose that $S\leq G$ is normal, $\alpha$-stable, closed and such that $(G/S,\alpha|^{G/S})$ is isomorphic to a shift. Suppose also that the map $\eta_k|_S : x\mapsto x^{-1}\alpha^{k}|_S(x)$ is surjective on~$S$. Let $f\in G$. Then, by the first paragraph, there is $x\in G$ such that $f = x^{-1}\alpha^k(x)$ modulo $S$. Hence $xf\alpha^k(x)^{-1}$ belongs to $S$ and, by assumption, there is $y\in S$ such that $xf\alpha^k(x)^{-1} = y^{-1}\alpha^k(y)$. Then $f = (yx)^{-1}\alpha^k(yx) = \eta_k(yx)$. An induction argument now shows that $\eta_k$ is surjective whenever $G$ has a composition series as in (\ref{eq:opennormal}), that is, whenever $(G,\alpha)$ has finite depth.
\par

Finally, suppose that $(G,\alpha)$ is topologically transitive and let $V$ be an open normal subgroup of $G$. Define $K_0 = \bigcap_{k\in{\ZZ}} \alpha^k(V)$, so that $G/K_0$ has finite depth. Then $\alpha$ corestricts to an automorphism of $G/K_0$ and it follows by the previous paragraph that the map $x\mapsto x^{-1}\alpha^k(x)$ is surjective on $G$ modulo $K_0$. In particular, the map ${\mathfrak q}_V\circ \eta_k$ is surjective on $G/V$, where ${\mathfrak q}_V$ denotes the quotient map $G\to G/V$. Since this holds for every compact open subgroup $V$, it follows that $\eta_k(G)$ is dense in $G$. On the other hand, $\eta_k$ is continuous and $G$ is compact, so that $\eta_k(G)$ is closed. Therefore $\eta_k$ is surjective. 
\end{proof}

The map $\eta_k$ is generally not injective  because there may be $k$-periodic elements of~$G$, that is, elements $x$ satisfying $\alpha^k(x) = x$. This occurs, for example, for $(F^{\mathbb{Z}},\sigma)$, where the set of periodic elements is dense. Moreover, it is shown in \cite[Theorem~5.7]{KSchmidt} that, if $G$ is abelian and $(G,\alpha)$ has finite depth, then the set of points periodic for $\alpha$ is dense in $G$. The set of periodic points may fail to be dense if $(G,\alpha)$ does not have finite depth, as seen in \cite[Examples~5.6]{KSchmidt}. For the case described in Example~\ref{ex:con_2-ways_not_dense}, the argument from \cite{KSchmidt} shows that the group ${\varprojlim  G_n}$ has no non-trivial $2^j$-periodic points for $\tilde\sigma$. 

The final result shows that an element of a, not necessarily compact, group that is invariant modulo $\nub{\alpha}$ is equal modulo $\nub{\alpha}$ to an invariant element.  
\begin{proposition}
\label{prop:constant}
Let $G$ be a totally disconnected, locally compact group and let $\alpha\in\Aut(G)$. Suppose that $x\in G$ satisfies $\alpha(x\nub{\alpha}) = x\nub{\alpha}$. Then there is $y\in x\nub{\alpha})$ such that $\alpha(y) = y$.
\end{proposition}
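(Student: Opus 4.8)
The plan is to reduce the statement to the surjectivity of a twisted map on $\nub{\alpha}$ and then invoke Proposition~\ref{prop:GenTits}. Write $N = \nub{\alpha}$. Recall that $N$ is a compact, $\alpha$-stable subgroup and that, by Proposition~\ref{prop:nubergodic}, the restriction $\alpha|_N$ is topologically transitive. Since $N$ is $\alpha$-stable, the hypothesis $\alpha(x\nub{\alpha}) = x\nub{\alpha}$ reads $\alpha(x)N = xN$, so that $a := x^{-1}\alpha(x)$ lies in $N$. The aim is then to produce an element $n\in N$ for which $y := xn$ is fixed by $\alpha$.

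First I would carry out the elementary reduction. A direct computation gives $\alpha(xn) = \alpha(x)\alpha(n) = xa\,\alpha(n)$, so that $\alpha(xn) = xn$ holds if and only if $a\,\alpha(n) = n$, that is, if and only if $a = n\,\alpha(n)^{-1}$. Introducing the map $\psi : N \to N$, $\psi(n) = n\,\alpha(n)^{-1}$ (which indeed maps $N$ into $N$ since $N$ is $\alpha$-stable), the whole statement amounts to showing that $a$ belongs to the image of $\psi$; it suffices to prove that $\psi$ is surjective.

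The key observation is that $\psi$ is, up to inversion, exactly the map already studied in Proposition~\ref{prop:GenTits}. Indeed, for the map $\eta_1 : g\mapsto g^{-1}\alpha(g)$ one computes $\eta_1(n^{-1}) = n\,\alpha(n)^{-1} = \psi(n)$, using that $\alpha$ is a homomorphism. Since inversion is a bijection of $N$, the images coincide, $\psi(N) = \eta_1(N)$. Now the pair $(N,\alpha|_N)$ is compact with $\alpha|_N$ topologically transitive, so Proposition~\ref{prop:GenTits} (applied with $k=1$) shows that $\eta_1$ is surjective onto $N$. Hence $\psi$ is surjective, and choosing $n\in N$ with $\psi(n) = a$ yields the desired $y = xn \in x\nub{\alpha}$ with $\alpha(y) = y$.

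There is no serious obstacle once the reduction is seen: the only points requiring care are the purely formal verification that the hypotheses of Proposition~\ref{prop:GenTits} are met by $(\nub{\alpha},\alpha|_{\nub{\alpha}})$ --- which is precisely the content of Proposition~\ref{prop:nubergodic} --- and the algebraic identity relating $\psi$ to $\eta_1$, which transports surjectivity from the latter to the former. The substantive work has already been carried out in Proposition~\ref{prop:GenTits}, and the present statement is in effect its reformulation in terms of $\alpha$-invariant cosets of the nub.
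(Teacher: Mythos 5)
Your proof is correct and follows essentially the same route as the paper: the paper also reduces to solving $g\,\alpha(g^{-1}) = h$ for $h = x^{-1}\alpha(x) \in \nub{\alpha}$, leaving the surjectivity of $g \mapsto g\,\alpha(g)^{-1}$ on the nub implicit (it is Proposition~\ref{prop:GenTits} with $k=1$, precomposed with inversion, applied to the topologically transitive pair $(\nub{\alpha},\alpha|_{\nub{\alpha}})$). You have simply made that justification explicit.
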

\begin{proof}
Let $h\in\nub{\alpha}$ be such that $\alpha(x) = xh$. Then for each $g\in \nub{\alpha}$ we have $\alpha(xg) = xh\alpha(g)$. Choose $g$ such that $g\alpha(g^{-1}) = h$ and put $y = xg$. 
\end{proof}


\begin{thebibliography}{22}

\bibitem{Aoki}
N. Aoki, 
\newblock Dense orbits of automorphisms and compactness of groups, 
\newblock {\sl Topology Appl. \/} {\bf 20} (1985), 1--15. 

\bibitem{BaumW_Cont} U. Baumgartner and G.\,A. Willis,
\newblock Contraction groups and scales of automorphisms of totally disconnected locally compact groups,
\newblock {\sl Israel J. Math. \/} {\bf 142} (2004), 221--248. 

\bibitem{CapracedeMedts} P.-E.\,Caprace and T.\,de Medts,
\newblock Simple locally compact groups acting on trees and their germs of automorphisms, 
\newblock {\tt arXiv:1002.0220}. 

\bibitem{DaShWi:Lie}
S.\,G.~Dani, N.~Shah, and G.\,A.~Willis,
\newblock Locally compact groups with dense orbits under ${\mathbb Z}^d$-actions by automorphisms,
\newblock {\sl Ergodic Theory \& Dynamical Systems\/}, {\bf 26} (2006), 1443--1465.  

\bibitem{vDant} van Dantzig,
\newblock ???

\bibitem{Fagnani} F. Fagnani,
\newblock Some results on the classification of expansive automorphisms of compact abelian groups,
\newblock {\sl Ergod. Th. \& Dynam. Sys.\/}, {\bf 16} (1996), 45--50.

\bibitem{GlockCont} H. Gl\"ockner,
\newblock Contraction groups for tidy automorphisms of totally disconnected groups,
\newblock {\sl Glasgow Math. J.\/}, {\bf 47} (2005), 329--333.

\bibitem{GWClosedCont} H. Gl\"ockner and G. A. Willis,
\newblock Classification of the simple factors appearing in composition series of totally disconnected contraction groups,
\newblock J. Reine Angew. Math. {\bf 643} (2010), 141--169.

\bibitem{HaglundPaulin} F.\,Haglund and F.\,Paulin, 
\newblock Simplicit\'e de groupes d'automorphismes d'espaces \`a courbure n\'egative, 
\newblock  The Epstein birthday schrift, 181--48 (electronic),
{\sl Geom. Topol. Monogr.\/}, {\bf 1}, Geom. Topol. Publ., Coventry, (1998). 

\bibitem{Halmos} P.\,R.~Halmos,
\newblock {\sl Lectures on Ergodic Theory\/}, Publ. Math. Soc. Japan, Tokyo (1956).

\bibitem{HazSie} W.~Hazod and E.~Siebert,
\newblock Automorphisms on a Lie group contracting modulo a compact subgroup and applications to semistable convolution semigroups,
\newblock {\sl J. Theor. Probab.\/}, {\bf 1} (1988), 211--225. 

\bibitem{HewRoss} E. Hewitt and K.\,A. Ross, 
\newblock {\sl Abstract Harmonic Analysis I},
\newblock Grund. Math. Wiss. Bd 115, Springer-Verlag, Berlin--G\"ottingen--Heidelberg 1963.

\bibitem{WJaw} W. Jaworski,
\newblock On contraction groups of automorphisms of totally disconnected locally compact groups,
\newblock {\sl Israel J. Math.\/}, {\bf 172} (2009), 1--8. 

\bibitem{WJaw2} W. Jaworski,
\newblock Contraction groups, ergodicity, and distal properties of automorphisms of compact groups,
\newblock {\sl preprint\/}. 

\bibitem{RoWi:conclcg}
W. Jaworski, J. M. Rosenblatt and G.A. Willis,
\newblock Concentration functions in locally compact groups,
\newblock {\sl Math. Annalen} {\bf 305}(1996), 673--691.

\bibitem{KaufRaj}
R. Kaufman and M. Rajagopalan, 
\newblock On automorphisms of a locally compact group,
\newblock {\sl Michigan Math. J.\/} {\bf 13} (1966), 373--374. 

\bibitem{Kitchens}
B.~Kitchens,
\newblock Expansive dynamics of zero-dimensional groups,
\newblock Ergod. Th. \& Dynam. Sys., {\bf 7} (1987), 249--261.

\bibitem{KitchSchmidt}
B.~Kitchens and K.~Schmidt, 
\newblock Automorphisms of compact groups,
\newblock {\sl Ergod. Th. \& Dynam. Sys.\/} {\bf 9} (1989), 691--735. 

\bibitem{KitchSchmidt2}
B.~Kitchens and K.~Schmidt, 
\newblock Isomorphism rigidity of irreducible $\mathbb{Z}^d$-actions,
\newblock {\sl Inventiones mathematicae\/} {\bf 142} (2000), 559--577. 

\bibitem{Lang:Algebra} S. Lang,
\newblock {\sl Algebra\/},
\newblock Graduate Texts in Mathematics (Revised Third Edition),  Springer-Verlag, New York, (2002). 

\bibitem{Lind&Schmidt} D. Lind and K. Schmidt,
\newblock Homoclinic points of algebraic $\mathbb{Z}^d$-actions,
\newblock {\sl Journal of the American Mathematical Society\/}, {\bf 12} (1999), 953--980.

\bibitem{MontZip} D. Montgomery and L. Zippin, 
\newblock {\sl Topological Transformation  Groups\/}, 
\newblock Interscience Publishers, New York--London 1955.

\bibitem{Moore} C. C. Moore,
\newblock The Mautner phenomenon for general unitary representations,
\newblock {\sl Pac. J. Math.\/}, {\bf 86} (1980), 155--169. 

\bibitem{Pre+Wu:dense} W.\,H.~Previts and T.-S.~Wu.  
\newblock Dense orbits and compactness of groups,  
\newblock {\sl Bull. Austral.  Math. Soc.}, 68(1):155--159, 2003.

\bibitem{KSchmidt} K.~Schmidt,
\newblock {\sl Dynamical Systems of Algebraic Origin},
\newblock Progress in Mathematics {\bf 128}, Birkh\"auser,, 1995.

\bibitem{Sie} E. Siebert,
\newblock Contractive automorphisms on locally compact groups, 
\newblock {\sl Math. Zeitschrift\/},  {\bf 191} (1986), 73--90. 
 
\bibitem{Tits2} J.~Tits,  
\newblock Sur le groupe d'automorphismes d'un arbre,
\newblock In A.~Haeflinger and R.~Narisimhan (editors), {\sl Essays on topology and related topics (Memoires d\'edi\'es \`a Georges de Rham\/}, pp 188--211, Springer Verlag, New York, 1970.

\bibitem{Wi:structure} G.\,A.~Willis, The structure of totally disconnected, locally compact groups,  
 \newblock {\sl Math. Ann.\/} {\bf 300}  (1994),  341--363.

\bibitem{Wang} J. S. P. Wang,
\newblock The Mautner phenomenon for $p$-adic Lie groups,
\newblock {\sl Math. Z.\/}, {\bf 185} (1984), 403--412. 

\bibitem{Wi:tdHM}
G.\,A.~Willis,
\newblock Totally disconnected groups and
proofs of conjectures of Hofmann and Mukherjea,
\newblock {\sl Bull. Australian Math. Soc.} {\bf 51}(1995), 489--494.

\bibitem{wi:further} G.\,A.~Willis, Further properties of the scale function on a
totally disconnected group, {\sl J. Algebra\/} {\bf  237}  (2001), 142--164.

\bibitem{Wi:SimulTriang}
G.\,A.~Willis,
\newblock Tidy subgroups for commuting automorphisms of totally
disconnected groups: an analogue of simultaneous
triangularisation of matrices, 
\newblock {\sl New York J. Math.\/} {\bf 10}(2004), 1--35.
(Available at \texttt{http://nyjm.albany.edu:8000/j/2004/Vol10.htm})

\bibitem{Wi:survey}
G.\,A.~Willis,
\newblock A canonical form for automorphisms of totally
disconnected locally compact groups,
\newblock in {\sl Random walks and geometry\/}, Walter de Gruyter, Berlin, (2004), 295--316.


\end{thebibliography}
\end{document}